\newtheorem{theorem}{Theorem}[subsection]
\newtheorem{lemma}[theorem]{Lemma}
\newtheorem{proposition}[theorem]{Proposition}
\newtheorem{corollary}[theorem]{Corollary}
\theoremstyle{definition}
\newtheorem{definition}[theorem]{Definition}
\newtheorem{example}[theorem]{Example}
\newtheorem{remark}[theorem]{Remark}
\numberwithin{equation}{subsection}
\numberwithin{table}{section}
\numberwithin{figure}{section}
\def\CC{{\mathbb C}}\def\FF{{\mathbb F}}
\def\qand{\quad\text{and}\quad}
\newcommand{\IN}{\ensuremath{\mathbb{N}}}
\newcommand{\NOR}{\ensuremath{\mathbin{\downarrow}}}
\newcommand{\nset}[1]{\ensuremath{[{#1}]}}
\newcommand{\card}[1]{\ensuremath{\lvert{#1}\rvert}}
\newcommand{\varcount}[1]{\ensuremath{\lvert{#1}\rvert}}
\newcommand{\spac}[2][n]{\ensuremath{s^\mathrm{ac}_{#1}({#2})}}
\newcommand{\finespac}[2][n]{\ensuremath{\sigma^\mathrm{ac}_{#1}({#2})}}
\newcommand{\spa}[2][n]{\ensuremath{s^\mathrm{a}_{#1}({#2})}}
\newcommand{\finespa}[2][n]{\ensuremath{\sigma^\mathrm{a}_{#1}({#2})}}
\renewcommand{\SS}{{\mathfrak S}}
\def\RR{\mathbb{R}}
\def\P{\mathcal P}
\DeclareMathOperator{\pr}{pr}
\DeclareMathOperator{\var}{var}
\title{The associative-commutative spectrum of a binary operation}
\author{Jia Huang}
\address[J. Huang]{Department of Mathematics and Statistics, University of Nebraska at Kearney, Kearney, NE 68849, USA}
\email{huangj2@unk.edu}
\author{Erkko Lehtonen}
\address[E. Lehtonen]%
     {Department of Mathematics \\
     Khalifa University \\
     P.O.~Box 127788 \\
     Abu Dhabi \\
     United Arab Emirates
     \and
     Centro de Matem\'atica e Aplica\c{c}\~oes \\
     Faculdade de Ci\^encias e Tecnologia \\
     Universidade Nova de Lisboa \\
     Quinta da Torre \\
     2829-516 Caparica \\
     Portugal}
\email{erkko.lehtonen@ku.ac.ae}
\thanks{This work is funded by National Funds through the FCT -- Funda\c{c}\~{a}o para a Ci\^{e}ncia e a Tecnologia, I.P., under the scope of the project UIDB/00297/2020 (Center for Mathematics and Applications) and the project PTDC/MAT-PUR/31174/2017.}
\keywords{Associative-commutative spectrum; associative spectrum; binary operation; tree}
\begin{document}
\begin{abstract}
We initiate the study of a quantitative measure for the failure of a binary operation to be commutative and associative. 
We call this measure the associative\hyp{}commutative spectrum as it extends the associative spectrum (also known as the subassociativity type), which measures the nonassociativity of a binary operation.
In fact, the associative\hyp{}commutative spectrum (resp.\ associative spectrum) is the cardinality of the operad with (resp.\ without) permutations obtained naturally from a groupoid (a set with a binary operation).
In this paper we provide some general results on the associative\hyp{}commutative spectrum, precisely determine this measure for certain binary operations, and propose some problems for future study.
\end{abstract}

\maketitle

\section{Introduction}\label{sec:Intro}

Associativity and commutativity are important properties for binary operations.
Although many familiar operations satisfy both properties, some only satisfy one or neither of them.
Moreover, nonassociativity and noncommutativity arise in many algebraic structures, such as Lie algebras, Poisson algebras, and so on.
One can measure the failure of a binary operation to be associative by its associative spectrum, which we recall below. 

A \emph{groupoid} is a set $G$ with a single binary operation $*$.\footnote{Note that the term \emph{groupoid} has a different meaning in category theory.}
A \emph{bracketing} of $n$ variables is a groupoid term over the set $X_n := \{x_1, \dots, x_n\}$ of variables that is obtained by inserting parentheses in the word $x_1 x_2 \dots x_n$ in a valid way.
Let $\P_*(n)$ denote the set of all $n$\hyp{}ary term operations on $(G,*)$ induced by the bracketings of $n$ variables.
The cardinality $\card{\P_*(n)}$ measures to some extent the failure of $*$ to be associative.
In general, we have $1 \leq \card{\P_*(n)} \leq C_{n-1}$ where $C_n := \frac{1}{n+1}\binom{2n}{n}$ is the ubiquitous \emph{Catalan number}.

Cs\'{a}k\'{a}ny and Waldhauser~\cite{AssociativeSpectra1} called the sequence $(\spa{*})_{n \in \IN_{+}}$, where $\spa{*} := \card{\P_*(n)}$ and $\IN_{+}:=\{1,2,3,\ldots\}$, the \emph{associative spectrum} of the binary operation $*$, while Braitt and Silberger~\cite{Subassociative} named it the \emph{subassociativity type} of the groupoid $(G,*)$.
Independently, Hein and the first author~\cite{CatMod} also proposed the study of $\spa{*} = \card{\P_*(n)}$ for a binary operation $*$, and provided an explicit formula when $*$ satisfies \emph{$k$\hyp{}associativity} (a generalization of associativity).
The associative spectra of many other binary operations have been determined~\cite{VarCat, DoubleMinus, GraphAlgebra1, GraphAlgebra2,AssociativeSpectra2}.

For each $n\ge1$, let $\overline{\P}_*(n)$ be the set of all $n$-ary term operations induced on $(G,*)$ by \emph{full linear terms} of $n$ variables, i.e., groupoid terms over $X_n$ in which each variable $x_1, \dots, x_n$ occurs exactly once (but in an arbitrary order, as opposed to bracketings).
We call the sequence $(\spac{*})_{n \in \IN_{+}}$, where $\spac{*} := \card{\overline{\P}_*(n)}$, the \emph{associative\hyp{}commutative spectrum} (in brief, \emph{ac\hyp{}spectrum}) of the binary operation $*$, which measures both the nonassociativity and the noncommutativity of $*$.
We will determine the ac\hyp{}spectra for certain binary operations and exhibit some connections to other interesting combinatorial objects and results.

It turns out that the associative spectrum and the ac-spectrum have connections with the operad theory, which models both nonassociativity and noncommutativity by using binary trees.
It was developed by Boardman, May, Vogt, and others, with applications recently found in many branches of mathematics (see, e.g., Loday and Vallette~\cite{Operads}).
We recall some basic definitions below.

An \emph{operad without permutations} is an indexed family $\P = \{\P(n)\}_{n\ge1}$ of sets with an identity element $1\in \P(1)$ and, for all positive integers $n,m_1,\ldots,m_n$, a \emph{composition map} 
\begin{align}
{\circ} \colon \P(n)\times \P(m_1)\times\cdots\times \P(m_n) & \to \P(m_1+\cdots+m_n) \\
(P,P_1,\ldots,P_n) & \mapsto P \circ (P_1,\ldots,P_n)
\end{align}
satisfying the following conditions:
for any $P \in \P(n)$, $P_i \in \P(n_i)$, $P_{i,j} \in \P(n_{i,j})$ ($1 \leq i \leq n$, $1 \leq j \leq n_i$), we have
$P\circ(1,\ldots,1)=P = 1\circ P$ and
\begin{multline*}
P\circ(P_1\circ(P_{1,1},\ldots,P_{1,m_1}), \ldots, P_n\circ(P_{n,1},\ldots,P_{n,m_n})) \\
= (P\circ(P_1,\ldots,P_{n})) \circ(P_{1,1},\ldots,P_{1,m_1},\ldots,P_{n,1},\ldots,P_{n,m_n}).
\end{multline*}
(An operad without permutations can thus be seen as a many\hyp{}sorted algebra with a nullary operation $1$ and operations $\circ_{n,m_1,\dots,m_n}$ for all positive integers $n, m_1, \dots, m_n$, but for notational simplicity, the same symbol $\circ$ is used to denote all of the latter.)
The elements of $\P(n)$ are called \emph{$n$-ary operations}.\footnote{One can obtain an operad without permutations by taking $\P(n)$ to be the set of all $n$-ary operations on a set, or, as another example, all $n$-ary term operations of an algebraic structure. This motivates the terminology ``$n$-ary operation'' in the context of operads.}
The \emph{Hilbert series} of an operad $\P$ without permutations is $\sum_{n=1}^\infty \card{\P(n)} t^n$.

We call $\P = \{ \P(n) \}_{n\ge1}$ an \emph{operad with permutations} if $\P$ satisfies all of the above and has an action of the symmetric group $\SS_n$ on $\P(n)$ for each $n\ge1$ satisfying the following equivariance conditions: for any $P\in\P(n)$, $w\in \SS_n$, $P_i\in\P(m_i)$, and $w_i\in\SS_{m_i}$, 
\[ 
(P\cdot w)\circ(P_{w^{-1}(1)},\ldots,P_{w^{-1}(n)}) = (P\circ(P_1,\ldots,P_n))\cdot w, 
\]
\[
P\circ(P_1\cdot w_1,\ldots,P_n\cdot w_n) = (P\circ(P_1,\ldots,P_n))\cdot(w_1,\ldots,w_n).
\]
Here by abuse of notation, the permutation $w$ on the right side of the first equation is the permutation of the set $\{1, \dots, m_1 + \dots + m_n\}$ that breaks the set into $n$ consecutive blocks of sizes $m_1, \dots, m_n$ and then permutes the $n$ blocks by $w$.
The \emph{Hilbert series} of an operad $\P = \{ \P(n) \}_{n\ge1}$ with permutations is $\sum_{n=1}^\infty \frac{\card{\P(n)}}{n!} t^n$.

Given a groupoid $(G,*)$, we have an operad $\P_* := \{\P_*(n)\}_{n\ge1}$ without permutations and an operad $\overline{\P}_* := \{ \overline{\P}_*(n)\}_{n\ge1}$ with permutations, whose Hilbert series have coefficients given by the associative spectrum $\spa{*}$ and the ac-spectrum $\spac{*}$, respectively.

It is clear that $\spac{*} \ge1$; the equality holds for all $n \in \IN_{+}$ if and only if $*$ is both commutative and associative.
On the other hand, we have the following upper bounds for the ac-spectrum.
\begin{enumerate}[label={\upshape(\roman*)}]
\item
Since full linear terms over $X_n$ are in bijection with (ordered) binary trees with $n$ labeled leaves, we have $\spac{*}\le n!C_{n-1}$ for an arbitrary binary operation $*$.
\item
Since the equivalence classes of full linear terms over $X_n$ modulo the equational theory of associative groupoids (semigroups) are in bijection with permutations of $\{1, \dots, n\}$, we have $\spac{*} \leq n!$ if $*$ is an associative binary operation.
\item
Since the equivalence classes of full linear terms over $X_n$ modulo the equational theory of commutative groupoids are in bijection with unordered binary trees with $n$ labeled leaves, we have $\spac{*} \leq D_{n-1}$ 
if $*$ is commutative, where $D_n:=(2n)!/(2^{n}n!)$~\cite[A001147]{OEIS}.
\end{enumerate} 
The upper bound in the last case is the solution to Schr\"oder's third problem; see, e.g., Stanley~\cite[p.~178]{EC2}.

In Section~\ref{sec:Free} we show that the above upper bounds can be achieved by the free groupoid on one generator, the free associative groupoid (i.e., the free semigroup) on two generators, and the free commutative groupoid on one generator, respectively.

In Section~\ref{sec:associative-commutative} we focus on binary operations that are associative or commutative.
For an associative noncommutative binary operation $*$, we show that its ac-spectrum $\spac{*}$ attains the upper bound $n!$ if it has a neutral element (i.e., identity element), and give some other examples for which $\spac{*}<n!$ in Section~\ref{sec:associative}.
In Section~\ref{sec:commutative} and Section~\ref{sec:commutative-neutral}, we provide some concrete examples of commutative groupoids whose ac\hyp{}spectra reach the upper bound $D_{n-1}$, but for the arithmetic, geometric, and harmonic means, we show that their ac\hyp{}spectra coincide with an interesting sequence that counts ways to express $1$ as an ordered sum of powers of $2$~\cite[A007178]{OEIS}.
The last example shows that the ac\hyp{}spectrum of a commutative operation may not achieve the upper bound $D_{n-1}$ even if it is \emph{totally nonassociative}, i.e., its associative spectrum equals the upper bound $C_{n-1}$.
However, we show that the converse does hold: a commutative groupoid is totally nonassociative if its ac\hyp{}spectrum reaches that upper bound.

In Section~\ref{sec:anticommutative} we show that the ac-spectrum of the bilinear product of some anticommutative algebras over a field, including the cross product and certain Lie brackets, is exactly two times the upper bound $D_{n-1}$ for the ac-spectrum of a commutative operation.

In Section~\ref{sec:TN}, we determine the ac\hyp{}spectra of some more examples of totally nonassociative operations, including the exponentiation, the implication, and the negated disjunction (NOR).
The exponentiation and the converse implication satisfy the identity $x(yz) \approx x(zy)$ and their ac-spectrum reaches the upper bound $n^{n-1}$ for the ac-spectrum of any binary operation satisfying the above identity. 
Here $n^{n-1}$ shows up because it is the number of unordered rooted trees with $n$ labeled vertices.
The negated disjunction is commutative and its ac-spectrum reaches the upper bound $D_{n-1}$ for commutative operations.
Together with Example~\ref{ex:two-element-groupoids}, this completely describes the ac-spectra of all two\hyp{}element groupoids.

In Section~\ref{sec:DepthEq}, we see that for some groupoids, two full linear terms induce the same term operation if and only if the corresponding binary trees are equivalent with respect to certain attributes related to the depths of the leaves.
In Section~\ref{sec:RightDepth} we obtain a formula involving the Stirling numbers of the second kind for the ac\hyp{}spectrum of a binary operation $*$  satisfying the property that any two full linear terms agree on $*$ if and only if the right depth sequences of their corresponding binary trees are congruent modulo $k$ (this is also equivalent to the the $k$-associativity mentioned earlier).
An example is given by $a*b:=a+e^{2\pi i/k} b$, which becomes addition and subtraction when $k=1$ and $k=2$.
Related to this example is the operation $a*b:= e^{2\pi i/k}(a+b)$.
When $k=2$ this becomes the \emph{double minus operation} $a \ominus b := -a-b$ whose associative spectrum is $\card{\P_*(n)} = \lfloor 2^n/3 \rfloor$~\cite[A000975]{OEIS}, as shown by Cs\'{a}k\'{a}ny and Waldhauser~\cite[\S~5.3]{AssociativeSpectra1} and independently by the first author, Mickey, and Xu~\cite{DoubleMinus}.
In section~\ref{sec:k-depth} we show that $\spac{\ominus}=(2^n-(-1)^n)/3$, which is the well-known \emph{Jacobsthal sequence}~\cite[A001045]{OEIS}.
The more general operation $a*b:= e^{2\pi i/k}(a+b)$ satisfies the \emph{$k$\hyp{}depth\hyp{}equivalence} studied recently by the second author.
Computations show that neither the associative spectrum nor the ac\hyp{}spectrum of this operation matches with any existing sequence in OEIS~\cite{OEIS} when $k>2$.

In Section~\ref{sec:conclusion}, we make some concluding remarks and indicate possible directions for further research.

For the reader's convenience, we provide a summary in Table~\ref{tab:summary}, in which we indicate for the different types of groupoids considered in this paper their ac-spectra and, for the sake of comparison, also their associative spectra.

\begin{table}[ht]
\renewcommand{\arraystretch}{1.5}
\begin{center}
\begin{tabular}{p{6.5cm}ccc}
\toprule
groupoid $(G,*)$ & $\spac{*}$ & $\spa{*}$ & reference \\
\midrule
free on one generator & $n!C_{n-1}$ & $C_{n-1}$ & Proposition~\ref{prop:free-groupoid} \\
free associative on two generators & $n!$ & $1$ & Proposition~\ref{prop:free-semigroup} \\
free commutative on one generator & $D_{n-1}$ & $C_{n-1}$ & Proposition~\ref{prop:free-com} \\
noncommutative associative with identity & $n!$ & $1$ & Proposition~\ref{prop:monoid} \\
$G=\{0,1\}$, $x*y$ is defined as
$1$, $\min\{x,y\}$, or $x+y \pmod 2$
& $1$ & $1$ & Example~\ref{ex:two-element-groupoids} \\

$G=\{0,1\}$, $x*y:=x$ & $n$ & $1$ & Example~\ref{ex:two-element-groupoids} \\
implication on $\{0,1\}$ & $n^{n-1}$ & $C_{n-1}$ & Proposition~\ref{prop:implication} \\
negated disjunction on $\{0,1\}$ & $D_{n-1}$ & $C_{n-1}$ & Proposition~\ref{prop:Sheffer} \\
$G=\{0,1\}$,
$x*y:=x+1\pmod2$
& 
\begingroup\renewcommand{\arraystretch}{1}
$\begin{array}[t]{ll}n & \text{for $n = 1, 2$} \\ 2n & \text{for $n \geq 3$} \end{array}$
\endgroup
& $2$ & Example~\ref{ex:two-element-groupoids} \\
arithmetic/geometric/harmonic mean & OEIS A007178 & $C_{n-1}$ & Proposition~\ref{prop:mean} \\
rock-paper-scissors operation & $D_{n-1}$ & $C_{n-1}$ & Proposition~\ref{prop:rock-paper-scissors} \\
commutative nonassociative with identity & $D_{n-1}$ & $C_{n-1}$ & Theorem~\ref{thm:com-neutral} \\
the cross product on $\mathbb{R}^3$ & $2 D_{n-1}$ & $C_{n-1}$ & Corollary~\ref{cor:cross} \\
Lie algebra with an $\mathfrak{sl}_2$-triple
over a field of characteristic not $2$
& $2 D_{n-1}$ & $C_{n-1}$ & Corollary~\ref{cor:Lie} \\
exponentiation on $\mathbb{R}_{\ge0}$ & $n^{n-1}$ & $C_{n-1}$ & Proposition~\ref{prop:exponentiation} \\
$a*b := a + e^{2 \pi i / k} b$ on $\CC$ & $k! S(n,k) + n \sum_{i=0}^{k-2} i! S(n-1,i)$ & $C_{k,n}$ & Theorem~\ref{thm:k-associative} \\
$a*b := -a-b$ on $\mathbb R$ & $(2^n - (-1)^n) / 3$ & $\left\lfloor 2^n/3 \right\rfloor$ & Theorem~\ref{thm:ominus} \\ 
\bottomrule
\end{tabular}
\end{center}

\bigskip
\caption{Summary of results}\label{tab:summary}
\end{table}


\section{Preliminaries}\label{sec:Prelim}

In this section we briefly recall some fundamental concepts concerning algebras, terms and identities, as well as trees, that are necessary for our work.
We will also introduce the \emph{\textup{(}fine\textup{)} associative\hyp{}commutative spectrum} of a groupoid.
This is a modification of the (fine) associative spectrum, introduced by Cs\'ak\'any and Waldhauser \cite{AssociativeSpectra1} and Liebscher and Waldhauser \cite{AssociativeSpectra2}.

Let $\IN_+$ denote the set of positive integers and let $\nset{n}$ denote the set $\{1, \dots, n\}$.

\subsection{Algebras, terms, and identities}
We recall some basic notions from universal algebra (see, e.g., Bergman \cite{Bergman}). 
An \emph{algebra}\footnote{This is not to be confused with an \emph{algebra over a field.} See Example~\ref{ex:algebras}\ref{ex:algebras:alg-over-field} and Section~\ref{sec:anticommutative}.} is a pair $\mathbf{A} = (A,F)$, where $A$ is a nonempty set (the \emph{universe} or the \emph{base set}) and $F = (f_i)_{i \in I}$ is a family of operations on $A$ (the \emph{fundamental operations}).
The mapping $\tau \colon I \to \IN$ that assigns to each $i \in I$ the arity of $f_i$ is called the \emph{algebraic similarity type} (or \emph{type}) of $\mathbf{A}$.
We may take as the index set a set $\mathcal{F}$ of \emph{operation symbols.}
Then an algebra of type $\tau \colon \mathcal{F} \to \IN$ is a pair $\mathbf{A} = (A, \mathcal{F}^\mathbf{A})$, where $\mathcal{F}^\mathbf{A} = (f^\mathbf{A})_{f \in \mathcal{F}}$ is a family of operations on $A$, each $f^\mathbf{A}$ having arity $\tau(f)$.
An algebra is \emph{trivial} if its universe has just one element.

Let us now fix an algebraic similarity type $\tau \colon \mathcal{F} \to \IN$.
Let $X$ be a set of \emph{variables} that is disjoint from $\mathcal{F}$.
We define \emph{terms} of type $\tau$ over $X$ by the following recursion:
every variable $x \in X$ is a term,
and if $f \in \mathcal{F}$ and $t_1, \dots, t_{\tau(f)}$ are terms, then $f(t_1, \dots, t_{\tau(f)})$ is a term.
Let $T_\tau(X)$ denote the set of all terms of type $\tau$ over $X$.
We usually take the set $X$ of variables to be one of the \emph{standard sets of variables:} for $n \in \IN_{+}$, $X_n := \{x_1, \dots, x_n\}$ and $X_\omega := \{x_1, x_2, \dots\}$.
Let $\var(t)$ denote the set of variables occurring in the term $t$, and let $\varcount{t}$ denote the total number of occurrences of variables in $t$.

We can define an algebra $\mathbf{T}_\tau(X)$ of type $\tau$ with universe $T_\tau(X)$ and, for each $f \in \mathcal{F}$, the $\tau(f)$\hyp{}ary fundamental operation $f^{\mathbf{T}_\tau(X)}$ defined as $f^{\mathbf{T}_\tau(X)}(t_1, \dots, t_{\tau(f)}) := f(t_1, \dots, t_{\tau(f)})$.
The algebra $\mathbf{T}_\tau(X)$ is called the \emph{term algebra} of type $\tau$ over $X$.
Note that  $\mathbf{T}_\tau(X)$ is \emph{absolutely free} over $X$, meaning that $\mathbf{T}_\tau(X)$ is generated by $X$ and has the \emph{universal mapping property,} i.e., for any algebra $\mathbf{A} = (A, \mathcal{F}^\mathbf{A})$ of type $\tau$ and any map $h \colon X \to A$, there is a unique homomorphism $h^\sharp \colon \mathbf{T}_\tau(X) \to \mathbf{A}$ that extends $h$.
In particular, the term algebra $\mathbf{T}_\tau(X_n)$ is called a \emph{free algebra of type $\tau$ on $n$ generators.}

Still in regard to the universal mapping property of $\mathbf{T}_\tau(X)$, let $\mathbf{A} = (A,\mathcal{F}^\mathbf{A})$ be an arbitrary algebra of type $\tau$, let $h \colon X \to A$ be an arbitrary map, and let $h^\sharp \colon \mathbf{T}_\tau(X) \to \mathbf{A}$ be the unique homomorphic extension of $h$.
We call the map $h$ an \emph{assignment} of values from $A$ for the variables in $X$.
For a term $t \in T_\tau(X)$, we call $h^\sharp(t)$ the \emph{value} of $t$ in $\mathbf{A}$ under $h$, and we say that $t$ \emph{evaluates} to $h^\sharp(t)$ in $\mathbf{A}$ under the assignment $h$.
For notational simplicity, we will write also $h(t)$ for $h^\sharp(t)$.
Of course, it is only the restriction of an assignment $h$ to $\var(t)$ that matters for the value $h^\sharp(t)$, so we may safely consider just partial assignments $\var(t) \to A$.

Let $\mathbf{A} = (A,\mathcal{F}^\mathbf{A})$ be an algebra of type $\tau$.
For each term $t \in T_\tau(X_n)$, we define an $n$\hyp{}ary operation $t^\mathbf{A}$ on $A$ by the following recursion.
If $t$ is a variable $x_i \in X_n$, then 
$t^\mathbf{A} = \pr_i^{(n)}$, where $\pr_i^{(n)}$ is the $i$-th $n$-ary \emph{projection}, defined by $(a_1, \dots, a_n) \mapsto a_i$.
If $t = f(t_1, \dots, t_{\tau(f)})$, where $f \in \mathcal{F}$ and $t_1, \dots, t_{\tau(f)} \in T_\tau(X_n)$, then $t^\mathbf{A}(a_1, \dots, a_n) := f^\mathbf{A}(t_1^\mathbf{A}(a_1, \dots, a_n), \dots, t_{\tau(f)}^\mathbf{A}(a_1, \dots, a_n))$.
The operation $t^\mathbf{A}$ is called the \emph{term operation} induced by $t$ on $\mathbf{A}$.
In other words, the term operation $t^\mathbf{A}$ provides the evaluations of $t$ in $\mathbf{A}$ under all possible assignments of values for variables:
if $h \colon X_n \to A$ is the assignment $h(x_i) = a_i$ for each $x_i \in X_n$, then $h(t) = t^\mathbf{A}(a_1, \dots, a_n)$.
For a set $T \subseteq T_\tau(X)$ of terms, let $T^\mathbf{A} := \{ t^\mathbf{A} \mid t \in T \}$.

An \emph{identity} of type $\tau$ is a pair $(s,t)$ of terms $s, t \in T_\tau(X_\omega)$, usually written as $s \approx t$.
An algebra $\mathbf{A} = (A,\mathcal{F}^\mathbf{A})$ \emph{satisfies} the identity $s \approx t$, denoted $\mathbf{A} \models s \approx t$, if $s^\mathbf{A} = t^\mathbf{A}$ (here we must assume that $s, t \in T_\tau(X_n)$ for some $n \in \IN$, but this is not a real restriction because it can always be done).
In other words, $\mathbf{A}$ satisfies $s \approx t$, if, by interpreting the terms $s$ and $t$ in the algebra $\mathbf{A}$, the two terms evaluate to the same value for all assignments of values from $A$ for the variables.
The set of all identities satisfied by an algebra $\mathbf{A}$ (by every member of a class $\mathcal{A}$ of algebras of type $\tau$, resp.)\ is called the \emph{equational theory} of $\mathbf{A}$ (of $\mathcal{A}$, resp.).
Conversely, the set of all algebras satisfying a set $\Sigma$ of identities is called the \emph{equational class} axiomatized by $\Sigma$.
It is well known that a class of algebras is an equational class if and only if it is a \emph{variety}, i.e., a class of algebras that is closed under homomorphic images, subalgebras, and direct products (Birkhoff~\cite{Birkhoff}).

In practice, we will often use the usual infix notation for binary operations, i.e., we write $a * b$ instead of $\mathord{*}(a,b)$.
For unary operations, it is also common to use special notations such as $a^{-1}$ instead of ${}^{-1}(a)$ or $\overline{a}$ instead of $\overline{\phantom{a}}(a)$.
We may also omit some parentheses from terms when there is no risk of misunderstanding.

\begin{example}\label{ex:algebras}
Examples of familiar algebras include the following.
\begin{enumerate}[label={\upshape(\roman*)}] 
\item
A \emph{groupoid} is an algebra $(G,{*})$ with a single binary operation $*$, i.e., an algebra of algebraic similarity type $\tau = (2)$.
A groupoid is \emph{commutative} if it satisfies the identity $x * y \approx y * x$ (\emph{commutative law}),
and it is \emph{associative} if it satisfies the identity $x * (y * z) \approx (x * y) * z$ (\emph{associative law}).
An associative groupoid is called a \emph{semigroup.}

\item
A \emph{monoid} is an algebra $(M,{*},e)$ of type $(2,0)$ that satisfies the associative law and the identities $x * e \approx e * x \approx x$. 
We call $e$ a \emph{neutral element} (or \emph{identity element}).

\item
A \emph{group} is an algebra $(G,{*},{}^{-1},e)$ of type $(2,1,0)$ such that $(G,{*},e)$ is a monoid and it satisfies the identities $x * x^{-1} \approx x^{-1} * x \approx e$.

\item
A \emph{ring} is an algebra $(R,{+}.{\cdot},{-},0)$ of type $(2,2,1,0)$ such that $(R,{+},{-},0)$ is a commutative group, $(R,{\cdot})$ is a semigroup, and it satisfies the identities $x \cdot (y + z) \approx (x \cdot y) + (x \cdot z)$ and $(y + z) \cdot x \approx (y \cdot x) + (z \cdot x)$ (\emph{distributive laws}).
A ring in which the multiplication is commutative is called a \emph{commutative ring.}

\item
A \emph{ring with identity} is an algebra $(R,{+},{\cdot},{-},0,1)$ of type $(2,2,1,0,0)$ such that $(R,{+},{\cdot},{-},0)$ is a ring and $(R,{\cdot},1)$ is a monoid.
A \emph{field} is a nontrivial commutative ring with identity in which every nonzero element has a multiplicative inverse (i.e., for all $x \in R \setminus \{0\}$, there is a $y \in R$ such that $x \cdot y = y \cdot x = 1$).

\item
Let $\mathbf{F} = (F,{+},{\cdot},{-},0,1)$ be a fixed field.
A \emph{vector space over $\mathbf{F}$} is an algebra
$(V, {+}, {-}, 0, (\lambda_a)_{a \in F})$
such that $(V, {+}, {-}, 0)$ is a commutative group and, for all $a, b \in F$, the following identities are satisfied:
\begin{align*}
\lambda_1(x) & \approx x, \\
\lambda_a(x + y) & \approx \lambda_a(x) + \lambda_a(y), \\
\lambda_{a+b}(x) & \approx \lambda_a(x) + \lambda_b(x), \\
\lambda_{a}(\lambda_b(x)) & \approx \lambda_{a \cdot b}(x).
\end{align*}
The unary operation $\lambda_a$ is called \emph{scalar multiplication by $a$.}
One often writes $a x$ for $\lambda_a(x)$.

\item\label{ex:algebras:alg-over-field}
Let $\mathbf{F}$ be a fixed field as above.
An \emph{algebra over the field $\mathbf{F}$} is an algebra $(A, {+}, {-}, 0, (\lambda_a)_{a \in F}, {*})$, where $(A, {+}, {-}, 0, (\lambda_a)_{a \in F})$ is a vector space over $\mathbf{F}$ and $*$ is a \emph{bilinear} binary operation, that is, for all $a, b \in F$, the following identities are satisfied:
\begin{align*}
(x + y) * z & \approx x * z + y * z, \\
z * (x + y) & \approx z * x + z * y, \\
\lambda_a(x) * \lambda_b(y) & \approx \lambda_{a \cdot b}(x * y).
\end{align*}
\end{enumerate}
\end{example}

\subsection{Associative-commutative spectrum}
In this paper, we discuss almost exclusively groupoids, i.e., algebras of type $\tau = (2)$.
Therefore, from now on, unless otherwise mentioned, we assume that the algebras under consideration are groupoids and the algebraic similarity type is not explicitly mentioned and is omitted from any notation we use.
When we speak, for example, of terms, we mean terms in the language of groupoids.
In fact, since we now have only one operation symbol, it does no harm to omit it from terms.

A term is called \emph{linear} if no variable occurs more than once therein.
We call a term $t \in T(X_n)$ \emph{full} over $X_n$ if $\var(t) = X_n$, i.e., every variable $x_i \in X_n$ occurs in $t$.
Thus, a full linear term over $X_n$ is a term that is obtained by inserting $n - 1$ pairs of parentheses in a valid way into the word $x_{\sigma(1)} x_{\sigma(2)} \dots x_{\sigma(n)}$ for some permutation $\sigma \in \SS_n$.
If, in the above, $\sigma$ is the identity permutation, then we get a \emph{bracketing} over $X_n$.
In other words, a full linear term $t \in T(X_n)$ is of the form $t = t'[x_{\sigma(1)}, \dots, x_{\sigma(n)}]$ for some bracketing $t'$ (the \emph{underlying bracketing} of $t$) and some permutation $\sigma \in \SS_n$.
(For a term $t \in T(X_n)$ and terms $u_1, \dots, u_n \in T(X_m)$, we let $t[u_1, \dots, u_n]$ denote the term in $T(X_m)$ obtained from $t$ by replacing each occurrence of $x_i$ by $u_i$, for all $i \in \nset{n}$.)
Let $F_n$ and $B_n$ denote the set of all full linear terms over $X_n$ and the set of all bracketings over $X_n$, respectively.
It is well known that the number of bracketings over $X_n$ equals the $(n-1)$\hyp{}st Catalan number $C_{n-1} = \frac{1}{n} \binom{2n-2}{n-1}$, i.e., $\card{B_n} = C_{n-1}$.
Consequently, $\card{F_n} = n! C_{n-1}$.
If $s, t \in B_n$ ($s, t \in F_n$, resp.)\ for some $n \in \IN$, then we call $s \approx t$ a \emph{bracketing identity} (a \emph{full linear identity,} resp.).

\begin{example}
There are twelve full linear terms over $X_3$, and they form the set
\[ 
F_3 = \{ ((x_{\sigma(1)} x_{\sigma(2)}) x_{\sigma(3)}) : \sigma \in \SS_3 \} \cup \{  (x_{\sigma(1)} (x_{\sigma(2)} x_{\sigma(3)})) : \sigma\in\SS_3 \}.
\]
There are two bracketings over $X_3$, and they form the set
\[
B_3 = \{ ((x_1 x_2) x_3), (x_1 (x_2 x_3)) \},
\]
which is a subset of $F_3$.
\end{example}

Let $\mathbf{G} = (G,*)$ be a groupoid.
The \emph{fine associative\hyp{}commutative spectrum} (in brief, \emph{fine ac\hyp{}spectrum}) of $\mathbf{G}$ is the sequence $(\finespac{\mathbf{G}})_{n \in \IN_{+}}$, where $\finespac{\mathbf{G}} := \{ (s,t) \in F_n \times F_n \mid \mathbf{G} \models s \approx t \}$.
In other words, $\finespac{\mathbf{G}}$ is the restriction of the equational theory of $\mathbf{G}$ to full linear identities over $X_n$.
It is clear that $\finespac{\mathbf{G}}$ is an equivalence relation on $F_n$.
The \emph{associative\hyp{}commutative spectrum} (in brief, \emph{ac\hyp{}spectrum}) of $\mathbf{G}$ is the sequence $(\spac{\mathbf{G}})_{n \in \IN}$, where $\spac{\mathbf{G}} := \card{F_n / \finespac{\mathbf{G}}}$, i.e., the number of equivalence classes of $\finespac{\mathbf{G}}$.
Equivalently, $\spac{\mathbf{G}}$ is the number of distinct term operations on $\mathbf{G}$ induced by the full linear terms over $X_n$, in symbols,
$\spac{\mathbf{G}} = \card{F_n^\mathbf{G}} = \card{\{ t^\mathbf{G} \mid t \in F_n \}}$.
The \emph{fine associative spectrum} $(\finespa{\mathbf{G}})_{n \in \IN_{+}}$ and the \emph{associative spectrum} $(\spa{\mathbf{G}})_{n \in \IN_{+}}$ of $\mathbf{G}$ were defined analogously by Liebscher and Waldhauser \cite{AssociativeSpectra2} by taking bracketings instead of full linear terms, i.e., by replacing $F_n$ with $B_n$ in the above definitions.
These numbers satisfy
$1 \leq \spa{\mathbf{G}} \leq \card{B_n} = C_{n-1}$ and $1 \leq \spac{\mathbf{G}} \leq \card{F_n} = n! C_{n-1}$.
We say $\mathbf{G}$ is \emph{totally nonassociative} if $\spa{\mathbf{G}} = C_{n-1}$ for all $n \geq 1$.
For notational simplicity, we may avoid giving a name to the groupoid and refer only to its fundamental operation, and hence we may sometimes replace $\mathbf{G}$ with $*$ in the notations introduced above.
For example, we may write $t^{*}$ for $t^\mathbf{G}$ or $\spac{*}$ for $\spac{\mathbf{G}}$.

The \emph{opposite groupoid} $(G,{*})^\mathrm{opp}$ of a groupoid $(G,{*})$ is the groupoid $(G,{\circ})$ with $a \circ b := b * a$ for all $a, b \in G$.
Groupoids $\mathbf{A}$ and $\mathbf{B}$ are said to be \emph{antiisomorphic} if $\mathbf{A}$ and $\mathbf{B}^\mathrm{opp}$ are isomorphic.
It is easy to see that isomorphic or antiisomorphic groupoids have the same associative spectrum and the same ac-spectrum.
The following facts follow immediately from the fact that equational classes of groupoids (classes axiomatized by identities) coincide with varieties of groupoids (classes closed under homomorphic images, subgroupoids, and direct products).
\begin{enumerate}[label={\upshape(\roman*)}]
\item If $\mathbf{A}$ is a homomorphic image of $\mathbf{B}$, then $\finespac{\mathbf{A}} \supseteq \finespac{\mathbf{B}}$ and $\spac{\mathbf{A}} \leq \spac{\mathbf{B}}$.
\item If $\mathbf{A}$ is a subgroupoid of $\mathbf{B}$, then $\finespac{\mathbf{A}} \supseteq \finespac{\mathbf{B}}$ and $\spac{\mathbf{A}} \leq \spac{\mathbf{B}}$.
\item If $\mathbf{C} = \mathbf{A} \times \mathbf{B}$, then $\finespac{\mathbf{C}} = \finespac{\mathbf{A}} \cap \finespac{\mathbf{B}}$ and $\spac{\mathbf{C}} \geq \max \{ \spac{\mathbf{A}}, \spac{\mathbf{B}} \}$.
\end{enumerate}

\subsection{Trees}\label{sec:trees}
Now recall that a (finite, undirected) graph is a pair $(V,E)$ where $V$ is a (finite) set whose elements are called \emph{vertices} and $E$ is a set of unordered pairs of vertices (i.e., sets of the form $\{a, b\}$, where $a, b \in V$) called \emph{edges}.
A graph is \emph{connected} if for any pair of vertices $u, v\in V$, there exists a sequence of vertices $(v_0, v_1, \ldots,v_\ell)$ such that $v_0=u$, $v_iv_{i+1}\in E$ for $i=0, 1,\ldots, \ell-1$, and $v_\ell=v$.
A \emph{cycle} is a sequence of distinct vertices $(v_0, v_1, \ldots,v_\ell)$ such that $v_iv_{i+1}\in E$ for all $i=0, 1, \ldots, \ell-1$ and $v_\ell v_0\in E$.
A \emph{tree} is a connected graph without any cycle.

Let $T$ be a \emph{rooted} tree, i.e., a tree with a distinguished vertex called the \emph{root} and with edges oriented away from the root.
We usually draw $T$ in such a way that its root appears at the very top and each vertex $v$ has a nonnegative number of children (i.e., out-neighbors) hanging below $v$.
A vertex in $T$ is a \emph{leaf} if it has no children, or an \emph{internal vertex} otherwise.

A rooted tree $T$ is \emph{ordered}%
\footnote{Ordered trees are often called \emph{plane trees} since a plane embedding of a tree induces a cyclic ordering of the neighbours of each vertex; moreover, if the root is drawn at the top -- following our drawing convention -- then the embedding specifies a linear ordering for the children of each internal vertex.}
if the children of each internal vertex are linearly ordered, or \emph{unordered} otherwise.
Given an ordered tree $T$, the unordered tree obtained from $T$ by simply ignoring the order of children of each internal vertex is called the \emph{underlying unordered tree} of $T$ and denoted by $T^\mathrm{u}$.
A rooted tree is \emph{labeled} if all of its vertices are labeled.
Given a vertex $v$ in a rooted tree $T$, the \emph{subtree of $T$ rooted at $v$} 
is the union of the paths from $v$ to the leaves that $v$ connects to, with $v$ itself as the root.

A rooted tree $T$ is a \emph{binary tree} if each internal vertex has exactly two children.
A binary tree is \emph{leaf-labeled} if its leaves are labeled.
The \emph{left subtree} $T_\mathrm{L}$ and \emph{right subtree} $T_\mathrm{R}$ of an ordered binary tree $T$ are the subtrees rooted at the left child and at the right child of the root of $T$, respectively.
If $S$ and $T$ are two ordered binary trees, then $S \wedge T$ is the ordered binary tree whose left and right subtrees are $S$ and $T$, respectively.
One can naturally extend these definitions to unordered binary trees by not distinguishing left and right.

Let $T$ be a binary tree with $n$ leaves labeled $1, \ldots, n$ in some order.
The \emph{left depth} $\delta_T(i)$, \emph{right depth} $\rho_T(i)$, and \emph{depth} $d_T(i)$ of a leaf $i$ in an ordered binary tree $T$ is the number of left, right, and all steps in the path from the root to the leaf labeled $i$.
This leads to the \emph{left depth sequence} $\delta_T:= (\delta_T(1), \dots, \delta_T(n))$, the \emph{right depth sequence} $\rho_T:= (\rho_T(1), \dots, \rho_T(n))$ and the \emph{depth sequence} $d_T := (d_T(1), \dots, d_T(n))$ of $T$.
If the leaves of $t$ are labeled $1, \dots, n$ from left to right, then each sequence above determines the ordered binary tree $T$ uniquely~\cite{AssociativeSpectra1, CatMod, DoubleMinus}.
The depth sequence can also be defined for unordered leaf-labeled binary trees.

There is a bijection between the set $T(X)$ of all terms over $X$ and the set of ordered binary trees with leaves labeled by the variables in $X$ (if $X=X_\omega$ then we may identify a label $x_i$ with $i$), defined recursively as follows:
each variable $x \in X$ corresponds to a tree with just one vertex labeled with $x$,
and
if the terms $t_1$ and $t_2$ correspond to trees $T_1$ and $T_2$, respectively, then the term $(t_1 t_2)$ corresponds to the tree $T_1 \wedge T_2$.
We write $T_t$ for the binary tree corresponding to the term $t$ via this bijection, and we write $t_T$ for the term corresponding to the binary tree $T$ via its inverse map.

\section{Free groupoids}\label{sec:Free}

In this section we show that the various upper bounds for the associative-commutative spectra mentioned in Section~\ref{sec:Intro} can be achieved by certain free groupoids with a \emph{small} number of generators.\footnote{We suspect that the free algebras (in a suitable variety) with countably infinitely many generators always have the largest spectrum possible (among the algebras in the variety).
}

\subsection{Free groupoid on one generator and free semigroup on two generators}
We first show that the upper bound $\spac{\mathbf{G}} \le n!C_{n-1}$ for the ac-spectrum $\spac{\mathbf{G}}$ of an arbitrary groupoid $\mathbf{G}$ becomes an equality when $\mathbf{G}$ is the free groupoid on one generator.

\begin{proposition}
\label{prop:free-groupoid}
For every $n \in \IN_{+}$, the $n$\hyp{}th term of the ac\hyp{}spectrum of the free groupoid $\mathbf{T}(X_1)$ with one generator is $\spac{\mathbf{T}(X_1)} = n!C_{n-1}$.
\end{proposition}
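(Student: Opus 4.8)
\emph{The plan} is to prove the matching lower bound $\spac{\mathbf{T}(X_1)} \geq n! C_{n-1}$, since the reverse inequality $\spac{\mathbf{T}(X_1)} \leq \card{F_n} = n! C_{n-1}$ is the general upper bound recorded earlier. Writing $g$ for the single generator and recalling that the universe of $\mathbf{T}(X_1)$ is the set $T(X_1)$ of all groupoid terms in $g$, with operation given by formal juxtaposition, I must show that the $n! C_{n-1}$ full linear terms in $F_n$ induce pairwise distinct term operations. Because $\mathbf{T}(X_1)$ is a term algebra, evaluating a term under an assignment $x_i \mapsto a_i$ (with $a_i \in T(X_1)$) coincides with the substitution $s[a_1, \ldots, a_n]$. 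So it suffices to exhibit a single tuple $\bar a = (a_1, \ldots, a_n) \in T(X_1)^n$ for which the map $s \mapsto s[\bar a]$ is injective on $F_n$: then distinct $s, t \in F_n$ already disagree at this one tuple, forcing $s^{\mathbf{T}(X_1)} \neq t^{\mathbf{T}(X_1)}$, and the map $s \mapsto s^{\mathbf{T}(X_1)}$ is injective.

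The key is to choose $\bar a$ so that $s$ can be uniquely reconstructed from $s[\bar a]$. Under the term--tree bijection, $s$ corresponds to an ordered binary tree $T_s$ with $n$ leaves labeled by a permutation of $\nset{n}$, and $s[\bar a]$ corresponds to the tree obtained by grafting $T_{a_i}$ onto the leaf labeled $i$, for each $i$. The only feature of $a_i$ that I will use is its number of leaves, which I set to $2^i$ (any term with $2^i$ leaves will serve). Then, for every subterm $s'$ of $s$ on variable set $S \subseteq \nset{n}$, the grafted subtree $T_{s'[\bar a]}$ has exactly $\sum_{j \in S} 2^j$ leaves; by uniqueness of binary representations this count determines $S$, and it is a single power of two precisely when $\card{S} = 1$.

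This yields a recursive decoding of $s$ from $u := s[\bar a]$ that uses only leaf counts and the unique readability of terms: at the current subtree, if its number of leaves is a power $2^i$ with $i \in \nset{n}$, declare a leaf labeled $i$ and stop; otherwise it equals some $(s'_L[\bar a])(s'_R[\bar a])$, so split at the root (uniquely, by unique readability) and recurse on both children. Since $\card{S} \geq 2$ can never produce a single power of two, these two cases never conflict, so the recursion follows $T_s$ exactly, never descends into a grafted subtree $T_{a_i}$, and reconstructs the labeled tree $T_s$, hence $s$ itself. This establishes injectivity and completes the proof.

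\emph{The main obstacle} is exactly this unique-reconstruction step. A naive assignment such as $a_i = g$ for all $i$ collapses the permutation (it records only the underlying bracketing), and arbitrary pairwise distinct gadgets can still be confused at the boundary between $T_s$ and the grafted subtrees; the super-increasing (here, powers-of-two) leaf counts are precisely what make that boundary unambiguous.
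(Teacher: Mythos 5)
Your proof is correct, but it takes a genuinely different route from the paper's. The paper separates an arbitrary pair of distinct terms $t, u \in F_n$ by a case analysis, with the separating assignment tailored to the pair: if the underlying bracketings of $t$ and $u$ differ, the constant assignment $x_i \mapsto x_1$ already yields distinct elements of $T(X_1)$; if the bracketings agree but the permutations $\sigma, \tau$ differ, it picks $i$ with $\sigma^{-1}(i) \neq \tau^{-1}(i)$ and assigns $x_i \mapsto (x_1 x_1)$ and $x_j \mapsto x_1$ for $j \neq i$, so the single ``cherry'' lands at distinct leaf positions of the common bracketing. You instead build one universal tuple $\bar a$ (gadgets with $2^i$ leaves) at which evaluation is injective on all of $F_n$ simultaneously, recovering $s$ from $s[\bar a]$ via uniqueness of binary representations of leaf counts; your decoding argument (the invariant that every visited subtree is some $s'[\bar a]$, and that a leaf count is a power of two exactly when $s'$ is a variable) is sound, and both proofs rest on the fact that evaluation in a term algebra is substitution. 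The paper's argument is shorter and uses only the two simplest gadgets, at the price of choosing the assignment after seeing the pair; yours proves something formally stronger, namely a single point of $T(X_1)^n$ at which all $n! C_{n-1}$ term operations take pairwise distinct values. Notably, your construction is exactly in the spirit of what the paper is later forced to do for the free commutative groupoid (Proposition~\ref{prop:free-com}, via the terms $\xi^{(n)}_i$), where pairwise-tailored simple assignments no longer suffice; the paper even remarks there that such a construction ``could in fact have [been] used already for the free groupoids, but there a simpler method works.''
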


\begin{proof}
We need to show that distinct full linear terms over $X_n$ induce distinct term operations on $\mathbf{T}(X_1)$.
Let $t, u \in F_n$ with $t \neq u$.
If the underlying bracketings of $t$ and $u$ are distinct, say $t'$ and $u'$, then the assignment that maps every variable to $x_1$ yields terms $t'[x_1, \dots, x_1]$ and $u'[x_1, \dots, x_1]$, which are distinct terms in $T(X_1)$.
Assume now that the underlying bracketings of $t$ and $u$ are equal.
Then $t = b[x_{\sigma(1)}, \dots, x_{\sigma(n)}]$ and $u = b[x_{\tau(1)}, \dots, x_{\tau(n)}]$ for some bracketing $b \in B_n$ and permutations $\sigma$ and $\tau$ of $\nset{n}$, with $\sigma \neq \tau$.
There exists an $i \in \nset{n}$ such that $\sigma^{-1}(i) \neq \tau^{-1}(i)$.
Now the assignment $x_i \mapsto (x_1 x_1)$ and $x_j \mapsto x_1$ for all $j \neq i$ maps $t$ and $u$ to terms of the form $b[x_1, \dots, x_1, (x_1 x_1), x_1, \dots, x_1]$, where the single occurrence of $(x_1 x_1)$ appears at distinct positions (namely, $\sigma^{-1}(i)$ and $\tau^{-1}(i)$, respectively); these terms are clearly distinct.
This completes the proof.
\end{proof}

The set $\Sigma^\mathrm{a}_X$ of all identities over $X$ that are satisfied by all semigroups (the restriction of the equational theory of semigroups to identities over $X$) is a congruence of the term algebra $\mathbf{T}(X)$, and
the quotient $\mathbf{T}(X) / \Sigma^\mathrm{a}_X$ is referred to as a \emph{free semigroup} over $X$.
It is isomorphic to the semigroup $\mathbf{X}^{+} = (X^+, \cdot)$ of nonempty words over $X$ endowed with the operation $\cdot$ of concatenation.

If $\mathbf{G}$ is a semigroup then $\spac{\mathbf{G}} \leq n!$ for all $n \in \IN_+$, since all bracketings over $X_n$ induce the same term operation on $\mathbf{G}$ and it is only the order of variables in a full linear term that matters.
We show that the equality in this upper bound holds when $\mathbf{G}$ is the free semigroup with two generators.
\begin{proposition}
\label{prop:free-semigroup}
For every $n \in \IN_{+}$, the $n$\hyp{}th term of the ac\hyp{}spectrum of the free semigroup with two generators is  $\spac{\mathbf{T}(X_2) / \Sigma^\mathrm{a}_{X_2}} = n!$.
\end{proposition}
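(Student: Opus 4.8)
The plan is to prove the matching lower bound $\spac{\mathbf{T}(X_2) / \Sigma^\mathrm{a}_{X_2}} \geq n!$, since the upper bound $\spac{\mathbf{G}} \le n!$ holds for every semigroup $\mathbf{G}$, as already observed just before the statement. I would work with the concrete model of the free semigroup on two generators as the concatenation semigroup $\mathbf{X}^{+} = (X^+, \cdot)$ with $X = \{a, b\}$, and exploit the fact that, by associativity, the term operation induced by a full linear term depends only on the order in which the variables are read off its leaves. Concretely, if $t \in F_n$ has underlying permutation $\sigma \in \SS_n$ (so that its leaves spell $x_{\sigma(1)} \cdots x_{\sigma(n)}$ from left to right), then $t^{\mathbf{X}^{+}}(w_1, \dots, w_n) = w_{\sigma(1)} w_{\sigma(2)} \cdots w_{\sigma(n)}$ for all $w_1, \dots, w_n \in X^+$, independently of the bracketing of $t$.

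It then suffices to show that distinct permutations yield distinct term operations. The key step is to choose a single self-delimiting assignment that lets one read the permutation back off the resulting word. I would set $w_i := a^i b$ for each $i \in \nset{n}$, so that evaluating $t$ produces the word $a^{\sigma(1)} b\, a^{\sigma(2)} b \cdots a^{\sigma(n)} b$. Here the occurrences of $b$ act as separators, and the blocks of $a$'s between consecutive $b$'s recover the sequence $(\sigma(1), \dots, \sigma(n))$ uniquely. Hence if $\sigma \neq \tau$, then the corresponding words differ, so the two term operations disagree at $(w_1, \dots, w_n)$ and are therefore distinct.

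Combining the two bounds, the number of distinct term operations induced by the full linear terms over $X_n$ is exactly $\card{\SS_n} = n!$, which gives $\spac{\mathbf{T}(X_2) / \Sigma^\mathrm{a}_{X_2}} = n!$.

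I expect the only genuine content to be the choice of the distinguishing assignment; everything else is routine bookkeeping. The subtlety worth flagging is precisely \emph{why} two generators are needed: over a one-generator free semigroup, words collapse to their lengths and $w_{\sigma(1)} \cdots w_{\sigma(n)}$ depends only on $\sum_i \card{w_i}$, so every permutation induces the very same operation. The encoding $w_i = a^i b$ is nothing but a uniquely decodable (indeed prefix-free) binary code, and any such code would serve equally well in place of this explicit choice.
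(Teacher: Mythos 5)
Your proof is correct and follows essentially the same route as the paper's: both reduce to showing that distinct permutations yield distinct term operations on the concatenation semigroup, by evaluating at a uniquely decodable encoding of the indices $1,\dots,n$ as words over the two generators. The only difference is the choice of code---your unary-with-separator words $a^i b$ versus the paper's fixed-width binary representations---and, as you yourself note, any uniquely decodable code works.
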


\begin{proof}
Let $t, u \in F_n$. Then $t = t'[x_{\sigma(1)}, \dots, x_{\sigma(n)}]$ and $u = u'[x_{\tau(1)}, \dots, x_{\tau(n)}]$ for some bracketings $t', u' \in B_n$ and permutations $\sigma, \tau \in \SS_n$.
We need to show that $t$ and $u$ induce distinct term operations on $\mathbf{X}_2^{+} = (X_2^+, \cdot)$ if and only if $\sigma \neq \tau$.
By associativity, the valuation of $t$ and $u$ under an assignment $h \colon X_n \to X_2^+$ results in the words $h(t) := h(x_{\sigma(1)}) \dots h(x_{\sigma(n)})$ and $h(u) := h(x_{\tau(1)}) \dots h(x_{\tau(n)})$, respectively.
Therefore, it is clear that the term operations coincide whenever $\sigma = \tau$.
If $\sigma \neq \tau$, then it is easy to find an assignment $h$ such that $h(t) \neq h(u)$ and hence the term operations are distinct.
For example, thinking of the two elements of $X_2$ as the binary digits $0$ and $1$, we define, for each $i \in \nset{n}$, $h(x_i)$ as the binary representation of the number $i$ with $N := \lceil \log_2 n \rceil$ bits.
Then $h(t)$ ($h(u)$, resp.)\ is a binary word of length $n N$, where the $i$\hyp{}th block of $N$ bits is the binary representation of $\sigma(i)$ ($\tau(i)$, resp.).
Since $\sigma \neq \tau$, there is an $i \in \nset{n}$ such that $\sigma(i) \neq \tau(i)$; therefore $h(t)$ and $h(u)$ differ in the $i$\hyp{}th block of $N$ bits.
\end{proof}

\begin{remark}
Note that we need (at least) two generators for the above proof to work.
In fact, the ac\hyp{}spectrum of the free semigroup with one generator is the constant $1$
sequence.
\end{remark}

\subsection{Free commutative groupoid with one generator}
The set $\Sigma^\mathrm{c}_X$ of all identities over $X$ that hold in all commutative groupoids (the restriction of the equational theory of commutative groupoids to identities over $X$) is a congruence of the term algebra $\mathbf{T}(X)$, and
the quotient $\mathbf{T}(X) / \Sigma^\mathrm{c}_X$ is referred to as a \emph{free commutative groupoid} over $X$.

It is clear that if $\mathbf{G}$ is a commutative groupoid and $s, t \in F_n$ are full linear terms such that $T_s$ and $T_t$ have the same underlying unordered binary tree, then $s^\mathbf{G} = t^\mathbf{G}$.
Consequently, $\spac{\mathbf{G}}$ is bounded above by the number $D_{n-1}$ of unordered binary trees with $n$ labeled leaves~\cite[A001147]{OEIS}, i.e., the solution to Schr\"oder's third problem; see, e.g., Stanley~\cite[p.~178]{EC2}.
To show that this upper bound can be achieved by taking $\mathbf{G}$ to be a free commutative groupoid, we need a slightly more complex construction (which we could in fact have used already for the free groupoids, but there a simpler method works).

\begin{definition}
For $n \in \IN_{+}$ and $i \in [n]$, let $\xi^{(n)}_i$ be the term
\[
((( \dots (((( \dots ( (x_1 x_1) x_1 ) \dots ) x_1 ) (x_1 x_1) ) x_1 ) \dots ) x_1 ) x_1),
\]
where the two subterms $(x_1 x_1)$ appear at depths $n$ and $i$; see Figure~\ref{fig:xi-i}.
\end{definition}

\begin{figure}
\tikzstyle{every node}=[inner sep=0.0625cm,circle, draw, fill=black]
\tikzstyle{nimi}=[draw=none,fill=none]
\begin{tikzpicture}
\node (0) at (0,0) {};
\node (0r) at ($(0) + (300:1)$) {};
\node (1) at ($(0) + (240:1)$) {};
\node (1r) at ($(1) + (300:1)$) {};
\node (i-2) at ($(1) + (240:1.5)$) {};
\node (i-2r) at ($(i-2) + (300:1)$) {};
\node (i-1) at ($(i-2) + (240:1)$) {};
\node (i-1r) at ($(i-1) + (300:1) + (0:1)$) {};
\node (i-1rl) at ($(i-1r) + (240:1)$) {};
\node (i-1rr) at ($(i-1r) + (300:1)$) {};
\node (i) at ($(i-1) + (240:1)$) {};
\node (ir) at ($(i) + (300:1)$) {};
\node (n-1) at ($(i) + (240:1.5)$) {};
\node (n-1r) at ($(n-1) + (300:1)$) {};
\node (n) at ($(n-1) + (240:1)$) {};
\node (nl) at ($(n) + (240:1)$) {};
\node (nr) at ($(n) + (300:1)$) {};
\draw ($(0) + (180:0.4)$) node[nimi]{$0$};
\draw ($(1) + (180:0.4)$) node[nimi]{$1$};
\draw ($(i-2) + (180:0.6)$) node[nimi]{$i-2$};
\draw ($(i-1) + (180:0.6)$) node[nimi]{$i-1$};
\draw ($(i) + (180:0.4)$) node[nimi]{$i$};
\draw ($(n-1) + (180:0.6)$) node[nimi]{$n-1$};
\draw ($(n) + (180:0.4)$) node[nimi]{$n$};
\foreach \u/\v in {0/1, 0/0r, 1/1r, i-2/i-2r, i-2/i-1, i-1/i-1r, i-1r/i-1rl, i-1r/i-1rr, i-1/i, i/ir, n-1/n-1r, n-1/n, n/nl, n/nr}
{
   \draw [thick] (\u) -- (\v);
}
\foreach \u/\v in {1/i-2, i/n-1}
{
   \draw [thick, dashed] (\u) -- (\v);
}
\foreach \u in {0r, 1r, i-2r, i-1rl, i-1rr, ir, n-1r, nl, nr}
{
   \draw ($(\u) + (270:0.3)$) node[nimi]{$x_1$};
}
\end{tikzpicture}

\caption{Binary tree corresponding to the term $\xi^{(n)}_i$.}
\label{fig:xi-i}
\end{figure}
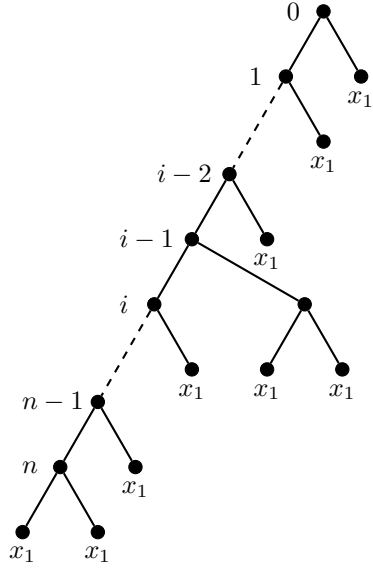

\begin{lemma}
The unordered binary trees corresponding to $\xi^{(n)}_i$ and $\xi^{(n)}_j$ are isomorphic if and only if $i = j$.
\end{lemma}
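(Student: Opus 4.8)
The plan is to understand precisely what the term $\xi^{(n)}_i$ looks like as an unordered tree, and then to extract an isomorphism invariant that distinguishes different values of $i$. The tree $T_{\xi^{(n)}_i}$ is, in essence, a ``caterpillar'' or comb: there is a main path descending from the root, at each node of which a single leaf labeled $x_1$ hangs off to one side, except that at depths $n$ and $i$ the hanging subtree is instead a cherry $(x_1 x_1)$, i.e.\ an internal vertex with two leaf children. So the whole tree is a spine along which all the attachments are single leaves, save for two distinguished attachment points which are cherries. Since the groupoid is commutative, only the underlying unordered tree matters, and my first step is simply to describe this unordered tree cleanly: a path of internal vertices with pendant leaves, two of those internal vertices additionally carrying a cherry instead of a lone leaf.

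The direction ($\Leftarrow$) is trivial: if $i = j$ then $\xi^{(n)}_i = \xi^{(n)}_j$ as terms, hence the trees are literally identical and a fortiori isomorphic. The substance is the forward direction ($\Rightarrow$), or rather its contrapositive: if $i \neq j$ then $T_{\xi^{(n)}_i}^\mathrm{u}$ and $T_{\xi^{(n)}_j}^\mathrm{u}$ are not isomorphic. The cleanest route is to find a numerical invariant of the unordered tree that reads off the pair $\{i, n\}$. A natural choice is the multiset of depths of the two cherries, or equivalently the depths at which a ``doubled'' attachment occurs. Concretely, I would count, for each depth value $d$, how many internal vertices at depth $d$ have two leaf children (a cherry); for $\xi^{(n)}_i$ this multiset is $\{i, n\}$ (taking $i \neq n$; the boundary case $i = n$ should be noted separately, where the single internal vertex carrying the two cherries lies at one depth). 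Since depth is preserved by rooted-tree isomorphism, this multiset is an isomorphism invariant, and from it one recovers $i$ (given that $n$ is fixed), so distinct $i$ give non-isomorphic trees.

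The one genuine subtlety, and the step I expect to require the most care, is making sure the invariant is well defined and genuinely recovers $i$ rather than getting confused by coincidences in the spine structure. In particular I must confirm that every internal vertex of the tree other than the two cherries has exactly one leaf child and one internal child (so it lies on the spine and is not itself a cherry), so that the ``cherries'' are unambiguously the two special vertices; this is immediate from the explicit form of the term but should be stated. I should also double-check the extreme values $i = n$ and small $n$, and verify that no automorphism of the ambient tree could move a cherry at depth $i$ to depth $n$ (again impossible, since automorphisms preserve depth). Once the depth-multiset invariant $\{i, n\}$ is established and $n$ is known, recovering $i$ is immediate, completing the contrapositive and hence the lemma.
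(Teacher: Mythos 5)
Your proof is correct and takes essentially the same approach as the paper: both arguments extract a depth-based invariant of the unordered rooted tree (preserved by any isomorphism, since isomorphisms preserve depth) and check that it determines $i$ --- the paper counts the number of leaves at each depth, while you record the multiset of depths of the cherries, namely $\{i,n\}$ for $i<n$ and $\{n,n\}$ for $i=n$. One minor correction: your sanity-check claim that every internal vertex other than the two cherries has exactly one leaf child and one internal child fails for the spine vertex from which the depth-$i$ cherry hangs (and for the vertex at depth $n-1$ when $i=n$), since that vertex has two internal children; but such a vertex is a fortiori not a cherry, so your invariant and the rest of your argument are unaffected.
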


\begin{proof}
In $T_{\xi^{(n)}_\ell}$ with $1 \leq \ell \leq n-1$, the number of leaves at depth $n + 1$ is $2$, the number of leaves at depth $\ell + 1$ is $3$, and the number of leaves at depth $k$ with $k \in \nset{n} \setminus \{\ell\}$ is $1$.
In $T_{\xi^{(n)}_n}$, the number of leaves at depth $n + 1$ is $4$ and the number of leaves at depth $k$ with $k \in \nset{n}$ is $1$.
Consequently, $T_{\xi^{(n)}_i}$ and $T_{\xi^{(n)}_j}$ are not isomorphic if $i \neq j$.
If $i = j$, then $T_{\xi^{(n)}_i}$ and $T_{\xi^{(n)}_j}$ are equal and hence obviously isomorphic.
\end{proof}

\begin{proposition}\label{prop:free-com}
For every $n \in \IN_{+}$, the $n$\hyp{}th term of the ac\hyp{}spectrum of the free commutative groupoid $\mathbf{T}(X_1) / \Sigma^\mathrm{c}_{X_1}$ with one generator is $D_{n-1}$.
\end{proposition}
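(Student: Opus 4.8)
The plan is to pair the upper bound we already have with a single separating assignment built from the tags $\xi^{(n)}_i$. For any commutative groupoid $\mathbf{G}$ we have $\spac{\mathbf{G}} \le D_{n-1}$, since two full linear terms whose ordered binary trees share the same underlying unordered tree induce the same term operation. It therefore remains to prove the matching lower bound for $\mathbf{G} := \mathbf{T}(X_1)/\Sigma^{\mathrm{c}}_{X_1}$: that full linear terms $s, t \in F_n$ with $T_s^{\mathrm{u}} \neq T_t^{\mathrm{u}}$ induce distinct term operations. The case $n = 1$ (where $D_0 = 1$) is trivial, so I assume $n \geq 2$ and exhibit one assignment that separates all such pairs at once.

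I would identify the universe of $\mathbf{G}$ with the set of (isomorphism classes of) unordered binary trees, the operation being the unordered join $\wedge$, and take the assignment $h \colon x_i \mapsto [\xi^{(n)}_i]$. For a full linear term $s$, the value $h(s)$ is then the unordered binary tree $G(T_s^{\mathrm{u}})$ obtained from $T_s^{\mathrm{u}}$ by grafting the tag $T_{\xi^{(n)}_i}^{\mathrm{u}}$ in place of each leaf $i$; commutativity is exactly what permits working with the underlying unordered tree. Consequently $s^\mathbf{G} = t^\mathbf{G}$ forces $G(T_s^{\mathrm{u}}) = G(T_t^{\mathrm{u}})$, so it suffices to show that the grafting map $U \mapsto G(U)$, on labeled unordered binary trees with $n$ leaves, is injective.

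To prove injectivity I would reconstruct $U$ from $G(U)$ by locating and contracting the grafted tags, the crucial step being an intrinsic, isomorphism\hyp{}invariant description of the \emph{grafting roots} (the images of the leaves of $U$). I claim a vertex $v$ of $G(U)$ is a grafting root if and only if the subtree rooted at $v$ is isomorphic to some $\xi^{(n)}_i$. Indeed, all tags share a common leaf count, say $M$; any proper subtree of a tag has strictly fewer than $M$ leaves, whereas the subtree rooted at any proper ancestor of a grafting root corresponds to an internal vertex of $U$ and hence contains at least two whole tags, giving at least $2M > M$ leaves. Thus only the grafting roots carry a subtree with exactly the shape of a tag. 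Since this property is preserved under isomorphism, any isomorphism $G(U_1) \cong G(U_2)$ must carry grafting roots to grafting roots; by the preceding lemma the isomorphism type of the pendant tag at such a vertex determines its index $i$, so the correspondence respects leaf labels. Contracting each tag to a single leaf bearing its index then yields an isomorphism of labeled unordered binary trees $U_1 \cong U_2$, and as all leaf labels are distinct this gives $U_1 = U_2$. Hence $G$ is injective and $\spac{\mathbf{G}} = D_{n-1}$.

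The main obstacle is precisely the intrinsic characterization of the grafting roots: one must be sure that grafting the tags onto the leaves of $U$ cannot accidentally produce, elsewhere in $G(U)$, a subtree isomorphic to a tag, for instance where a tag meets the surrounding tree structure. This is what the uniform leaf count $M$ of the tags, together with the depth\hyp{}profile distinctions of the preceding lemma, are designed to guarantee; importantly, the depth shifts incurred when a tag is grafted at a deep leaf alter only the absolute depths and not the internal isomorphism type of the tag, so neither the leaf\hyp{}counting argument nor the identification of the index $i$ is affected.
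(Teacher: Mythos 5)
Your proof is correct and takes essentially the same route as the paper's: the same upper bound from commutativity, the same separating assignment $x_i \mapsto \xi^{(n)}_i$, and the same appeal to the lemma that the tags $\xi^{(n)}_i$ are pairwise non-isomorphic. The only difference is that you spell out in full, via the uniform leaf count $M$ and the ancestor/descendant leaf-counting characterization of grafting roots, the injectivity of the grafting map --- precisely the step the paper compresses into ``it is easy to verify that the labeled unordered binary trees corresponding to $t'$ and $u'$ are isomorphic if and only if those of $t$ and $u$ are.''
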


\begin{proof}
We need to show that for all $t, u \in F_n$ such that $t \approx u \notin \Sigma^\mathrm{c}_{X_n}$, the term operations on $\mathbf{T}(X_1) / \Sigma^\mathrm{c}_{X_1}$ induced by $t$ and $u$ are distinct.
Let $t, u \in F_n$ with $t \approx u \notin \Sigma^\mathrm{c}_{X_n}$.
Consider the assignment $x_i \mapsto \xi^{(n)}_n$.
The resulting terms $t' := t[\xi^{(n)}_1, \dots, \xi^{(n)}_n]$ and $u' := u[\xi^{(n)}_1, \dots, \xi^{(n)}_n]$ belong to $T(X_1)$.
It is easy to verify that the labeled unordered binary trees corresponding to $t'$ and $u'$ are isomorphic if and only if those of $t$ and $u$ are.
Since $t \approx u \notin \Sigma^\mathrm{c}_{X_n}$, the binary trees corresponding to $t$ and $u$ are nonisomorphic as labeled unordered binary trees; hence so are the trees corresponding to $t'$ and $u'$, and we have $t' \approx u' \notin \Sigma^\mathrm{c}_{X_1}$, so ${t'}^{\mathbf{T}(X_1) / \Sigma^\mathrm{c}_{X_1}} \neq {u'}^{\mathbf{T}(X_1) / \Sigma^\mathrm{c}_{X_1}}$.
\end{proof}

\section{Associative or commutative groupoids}\label{sec:associative-commutative}

In this section we study the ac\hyp{}spectra of some groupoids that are either associative or commutative.

\subsection{Associative groupoids (semigroups)}\label{sec:associative}
Assume first that $\mathbf{G}$ is an associative groupoid (a semigroup).
We know that the upper bound $\spac{\mathbf{G}} \leq n!$ for semigroups is reached by the free semigroup with two generators (Proposition \ref{prop:free-semigroup}).
Now we provide another example of a family of groupoids for which this upper bound is achieved.


\begin{proposition}\label{prop:monoid}
If $\mathbf{G} = (G,{*})$ is a noncommutative monoid, then we have $\spac{\mathbf{G}} = n!$ for all $n \geq 1$.
\end{proposition}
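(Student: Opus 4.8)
The plan is to prove $\spac{\mathbf{G}} = n!$ by establishing the two matching inequalities, where only the lower bound carries any content. The upper bound is immediate: a monoid is in particular a semigroup, so by the observation preceding Proposition~\ref{prop:free-semigroup} (all bracketings over $X_n$ collapse to a single operation by associativity, and only the order of the variables matters) we already have $\spac{\mathbf{G}} \le n!$. Thus it suffices to show that distinct permutations of the variables induce \emph{distinct} term operations on $\mathbf{G}$. First I would make this reduction precise: by associativity, any full linear term with underlying permutation $\sigma \in \SS_n$ induces the same operation as the ``flat'' product $(a_1, \dots, a_n) \mapsto a_{\sigma(1)} * a_{\sigma(2)} * \cdots * a_{\sigma(n)}$, since the parenthesization is irrelevant. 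Hence $\spac{\mathbf{G}}$ equals the number of distinct operations of this form as $\sigma$ ranges over $\SS_n$, and the goal becomes to separate any two of them.

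The heart of the argument is a simple combinatorial observation combined with the neutral element. If $\sigma \neq \tau$, then the two permutations impose different relative orders on the variables: a permutation is completely determined by the relative order it induces on every pair of labels, so there must exist labels $a, b \in \nset{n}$ such that $a$ precedes $b$ in the word $x_{\sigma(1)} \cdots x_{\sigma(n)}$ while $b$ precedes $a$ in $x_{\tau(1)} \cdots x_{\tau(n)}$. Now I would exploit noncommutativity: choose $p, q \in G$ with $p * q \neq q * p$, which exist precisely because $\mathbf{G}$ is not commutative. Consider the assignment $x_a \mapsto p$, $x_b \mapsto q$, and $x_c \mapsto e$ for every other variable $x_c$. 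Since $e$ is neutral, all the ``switched off'' variables vanish from both products, so the $\sigma$-product evaluates to $p * q$ and the $\tau$-product evaluates to $q * p$. As these two values differ, the term operations are distinct, which gives the lower bound $\spac{\mathbf{G}} \ge n!$ and completes the proof.

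I do not expect any serious obstacle here, since the neutral element does all the work of isolating the two distinguishing variables; the only points requiring care are the elementary fact that distinct permutations differ in the relative order of some pair of labels, and the bookkeeping that the neutral element really does delete every variable except $x_a$ and $x_b$ from the flat product. The base case $n = 1$ is trivial, as $F_1 = \{x_1\}$ gives $\spac[1]{\mathbf{G}} = 1 = 1!$.
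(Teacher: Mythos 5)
Your proof is correct and follows essentially the same route as the paper's: the paper also reduces to the order of the variables via associativity, extracts a pair of labels whose relative order differs under the two permutations (phrased there as an inversion pair of $\tau^{-1}\pi$), and separates the two term operations by assigning a noncommuting pair $a, b$ to those two variables and the neutral element $e$ to all the rest. The only cosmetic difference is that you split the argument into the two inequalities $\spac{\mathbf{G}} \le n!$ and $\spac{\mathbf{G}} \ge n!$, whereas the paper proves directly that $s^\mathbf{G} = t^\mathbf{G}$ if and only if the underlying permutations coincide.
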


\begin{proof}
Let $s, t \in F_n$.
Then $s = s'[x_{\pi(1)}, \dots, x_{\pi(n)}]$ and $t = t'[x_{\tau(1)}, \dots, x_{\tau(n)}]$ for some bracketings $s', t' \in B_n$ and permutations $\pi, \tau \in \SS_n$.
We show that $s^\mathbf{G} = t^\mathbf{G}$ if and only if $\pi = \tau$, from which the proposition follows.

If $\pi = \tau$, then clearly $s^\mathbf{G} = t^\mathbf{G}$ because the bracketings $s'$ and $t'$ are irrelevant by the associativity of $*$.
Assume now that $\pi \neq \tau$.
Then $\tau^{-1} \pi$ is a nonidentity permutation, and therefore it must have an inversion pair, say $(i,j)$ with $i < j$ and $\tau^{-1} \pi(i) > \tau^{-1} \pi(j)$.
Since $*$ is not commutative, there exist elements $a, b \in G$ such that $a * b \neq b * a$.
Let $e$ be the neutral element of $*$.
Assign the value $a$ to $x_{\pi(i)} = x_{\tau(\tau^{-1} \pi(i))}$, the value $b$ to $x_{\pi(j)} = x_{\tau(\tau^{-1} \pi(j))}$, and the value $e$ to all remaining variables.
Under this assignment, the term $s$ evaluates to $e \dots e a e \dots e b e \dots e = ab$, where on the left side, $a$ and $b$ occur as the $i$\hyp{}th and the $j$\hyp{}th factors, respectively.
On the other hand, the term $t$ evaluates to $e \dots e b e \dots e a e \dots e = ba$, where on the left side, $a$ and $b$ occur as the $\tau^{-1} \pi(i)$\hyp{}th and the $\tau^{-1} \pi(j)$\hyp{}th factors, respectively.
This shows that $s^\mathbf{G} \neq t^\mathbf{G}$.
\end{proof}

The above proposition would no longer be true if we omitted the assumption that the groupoid has a neutral element, as shown by the following example.

\begin{example}\label{ex:two-element-groupoids}
Cs\'{a}k\'{a}ny and Waldhauser~\cite[Section~4]{AssociativeSpectra1} determined the associative spectrum of every two\hyp{}element groupoid.
Such a groupoid is isomorphic or antiisomorphic to $\mathbf{G} = (\{0,1\},*)$, where $x*y$ is defined as one of the following: (1) $1$, (2) $x$, (3) $\min\{x,y\}$, (4) $x+y \pmod 2$, (5) $x+1 \pmod 2$, (6) $x \NOR y$
(negated disjunction, NOR) or (7) $x \rightarrow y$ (implication).
We now set out to determine their ac\hyp{}spectra.

We have $\spac{\mathbf{G}} = 1$ for all $n \in\IN_+$ if $*$ defined by (1), (3), or (4), since $*$ is both associative and commutative in these three cases.
The operation $*$ defined by (2) is associative but not commutative, and we have $\spac{\mathbf{G}} = n$ for all $n \in\IN_+$, since for any $t \in F_n$ with $t = t'[x_{\tau(1)}, \dots, x_{\tau(n)}]$, where $t' \in B_n$ and $\tau \in \SS_n$, we have $t^\mathbf{G} = \pr^{(n)}_{\tau(1)}$.
For the operation $*$ defined by (5), we have $\spac[1]{\mathbf{G}} = 1$, $\spac[2]{\mathbf{G}} = 2$, and $\spac{\mathbf{G}} = 2n$ for all $n \geq 3$, since for any $t \in F_n$ with $t = t'[x_{\tau(1)}, \dots, x_{\tau(n)}]$, where $t' \in B_n$ and $\tau \in \SS_n$, we have $t^\mathbf{G}(a_1, \dots, a_n) = a_{\tau(1)} + d \pmod 2$, where $d$ is the left depth of the leftmost leaf in the binary tree corresponding to $t$.
The groupoids given by (6) and (7) are totally nonassociative and their ac-spectra will be determined in Section~\ref{sec:TN}.
\end{example}

\subsection{Commutative groupoids}\label{sec:commutative}
Now we assume that $\mathbf{G} = (G,{*})$ is a commutative groupoid.
Recall that we have an upper bound $\spac{\mathbf{G}} \le D_{n-1}$ which is attained by the free commutative groupoid with one generator (Proposition \ref{prop:free-com}).
As shown by the following lemma, any commutative groupoid $\mathbf{G}$ reaching this upper bound must be totally nonassociative.

\begin{lemma}
\label{lem:com:large-implies-na}
Let $\mathbf{G} = (G,{*})$ be a commutative groupoid.
If $\spac{\mathbf{G}} = D_{n-1}$ for $n \in \IN_{+}$, then $\mathbf{G}$ is totally nonassociative, i.e., $\spa{\mathbf{G}} = C_{n-1}$ for all $n \in \IN_+$.
\end{lemma}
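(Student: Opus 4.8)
The plan is to use the extremal hypothesis to determine the full linear identities of $\mathbf{G}$ exactly, and then to cut down from all of $F_n$ to the bracketings $B_n$. Recall that the bound $\spac{\mathbf{G}} \le D_{n-1}$ comes from the observation that, since $\mathbf{G}$ is commutative, $s^{\mathbf{G}} = t^{\mathbf{G}}$ holds whenever $T_s$ and $T_t$ have the same underlying unordered binary tree; thus the fibers of the map $s \mapsto T_s^{\mathrm{u}}$, of which there are exactly $D_{n-1}$, are unions of $\finespac{\mathbf{G}}$\hyp{}classes, whence $\spac{\mathbf{G}} \le D_{n-1}$. The first step is to note that the equality $\spac{\mathbf{G}} = D_{n-1}$ forces each such fiber to be a single class: for all $s,t \in F_n$ one has $s^{\mathbf{G}} = t^{\mathbf{G}}$ if and only if $T_s^{\mathrm{u}}$ and $T_t^{\mathrm{u}}$ are isomorphic as leaf\hyp{}labeled unordered binary trees. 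In other words, under the hypothesis the relation $\finespac{\mathbf{G}}$ is precisely the restriction of the commutativity congruence $\Sigma^{\mathrm{c}}_{X_n}$ to $F_n$.

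The crux, and the step I expect to be the main obstacle, is the following purely combinatorial claim: the assignment $b \mapsto T_b^{\mathrm{u}}$ sending a bracketing to its underlying leaf\hyp{}labeled unordered binary tree is injective. To prove it I would use the interval structure of bracketings: in $T_b$ the leaves are labeled $1, \dots, n$ from left to right, so at every internal vertex the leaf\hyp{}label set of the subtree rooted there is a contiguous block $\{p, p+1, \dots, q\}$, split by its two children into an initial segment $\{p, \dots, r\}$ on the left and a final segment $\{r+1, \dots, q\}$ on the right. Consequently, in $T_b^{\mathrm{u}}$ the two subtrees hanging from any internal vertex have order\hyp{}separated label sets (every label in one is smaller than every label in the other), so the left/right orientation at each vertex is forced, and hence the ordered tree $T_b$ and the bracketing $b$ itself can be recovered uniquely from $T_b^{\mathrm{u}}$. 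Thus distinct bracketings have non\hyp{}isomorphic underlying unordered trees.

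Putting these together finishes the argument. Since $B_n \subseteq F_n$ and, by the first step, term operations separate full linear terms exactly up to isomorphism of underlying unordered trees, the injectivity claim shows that distinct bracketings in $B_n$ induce distinct term operations on $\mathbf{G}$; hence $\spa{\mathbf{G}} = \card{B_n} = C_{n-1}$. Carrying out this reasoning at every level $n$ for which the hypothesis holds yields $\spa{\mathbf{G}} = C_{n-1}$ for all $n \in \IN_+$, that is, $\mathbf{G}$ is totally nonassociative. I expect the bookkeeping in the first step (identifying equality in the $D_{n-1}$ bound with the collapse of $\finespac{\mathbf{G}}$ onto $\Sigma^{\mathrm{c}}_{X_n}$) to be routine, with the genuine content concentrated in the interval argument of the second paragraph.
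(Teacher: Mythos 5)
Your proposal is correct and takes essentially the same approach as the paper: equality in the bound $\spac{\mathbf{G}} \le D_{n-1}$ forces two full linear terms to induce the same operation on $\mathbf{G}$ exactly when their underlying unordered leaf\hyp{}labeled binary trees are isomorphic, and distinct bracketings have non\hyp{}isomorphic such trees, giving $\spa{\mathbf{G}} = C_{n-1}$ level by level. The only difference is that you spell out the counting argument and the interval argument proving injectivity of $b \mapsto T_b^{\mathrm{u}}$ on $B_n$, both of which the paper asserts without detailed proof.
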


\begin{proof}
The commutativity of $*$ and the hypothesis $\spac{\mathbf{G}} = D_{n-1}$ imply that terms $s, t \in F_n$ induce the same operation on $\mathbf{G}$ if and only if $s$ and $t$ are are congruent modulo the equational theory $\Sigma^\mathrm{c}_{X_n}$ of commutative semigroups, i.e., the unordered binary trees $T_s^\mathrm{u}$ and $T_t^\mathrm{u}$ with labeled leaves corresponding to the terms $s$ and $t$ are isomorphic.
Binary trees corresponding to bracketings of $n$ variables are isomorphic as unordered leaf-labeled binary trees if and only if the bracketings are equal.
Consequently, $\spa{\mathbf{G}} = C_{n-1}$.
\end{proof}

The converse of Lemma~\ref{lem:com:large-implies-na} does not hold.
If $*$ is the arithmetic, geometric, or harmonic mean, then $\spa{*} = C_{n-1}$ for all $n \geq 1$ (see Cs\'ak\'any, Waldhauser~\cite{AssociativeSpectra1}).
However, as we are going to show next, its
ac\hyp{}spectrum agrees with an interesting sequence in OEIS~\cite[A007178]{OEIS}, which enumerates different ways to write $1$ as an ordered sum of $n$ powers of $2$ (i.e., \emph{compositions of $1$ into powers of $2$}) and is also related to the so\hyp{}called \emph{prefix codes} or \emph{Huffman codes} (see, e.g., Even and Lempel~\cite{EvenLempel}, Giorgilli and Molteni~\cite{GM}, Knuth~\cite{Knuth}, Krenn and Wagner~\cite{KrennWagner} and Lehr, Shallit and Tromp~\cite{LST}).

\begin{proposition}\label{prop:mean}
If $*$ is the arithmetic mean on $\RR$, the geometric mean on $\RR_{+}$, or the harmonic mean on $\RR_{+}$, then $\spac{*}$ equals the number of ways to write $1$ as an ordered sum of $n$ powers of $2$ for all $n \geq 1$.
\end{proposition}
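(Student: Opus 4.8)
The plan is to reduce all three cases to the arithmetic mean and then to recognize the induced term operations as weighted averages whose weights are powers of $2$ read off from the leaf depths of the corresponding trees.

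First I would observe that the three operations are isomorphic as groupoids: the map $x \mapsto \log x$ is an isomorphism from the geometric mean on $\RR_{+}$ to the arithmetic mean on $\RR$, and $x \mapsto 1/x$ is an isomorphism from the harmonic mean on $\RR_{+}$ to the arithmetic mean on $\RR_{+}$. Since isomorphic groupoids have the same ac-spectrum, it suffices to compute $\spac{*}$ for the arithmetic mean $a * b := (a+b)/2$. The argument below works over any subset $D \subseteq \RR$ that is closed under this operation and contains at least two points (in particular over both $\RR$ and $\RR_{+}$), so all three cases are covered simultaneously.

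Next I would determine the term operations explicitly. By induction on the structure of the binary tree $T_t$, I claim that for every full linear term $t \in F_n$,
\[
t^{*}(a_1, \dots, a_n) = \sum_{i=1}^{n} 2^{-d_{T_t}(i)} a_i ,
\]
where $d_{T_t}(i)$ is the depth of the leaf labeled $i$: the base case is a single leaf at depth $0$ carrying weight $1$, and the inductive step uses that combining the two subtrees under $*$ halves every weight, equivalently increments every leaf depth by $1$. Because a linear functional on $D^n$ is determined by its coefficient vector (if $s^{*} \neq t^{*}$ then the nonzero vector of coefficient differences, which sums to $0$, does not vanish identically on $D^n$, even for $D = \RR_{+}$), two full linear terms $s, t$ satisfy $s^{*} = t^{*}$ if and only if $2^{-d_{T_s}(i)} = 2^{-d_{T_t}(i)}$ for all $i$, i.e.\ if and only if the depth sequences agree, $d_{T_s} = d_{T_t}$. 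Hence $\spac{*}$ equals the number of distinct depth functions $i \mapsto d_{T_t}(i)$ arising from full linear terms over $X_n$.

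Finally I would count these depth functions. Reading the depth of leaf $i$ as an exponent, each realizable depth function $d$ produces the ordered tuple $(2^{-d(1)}, \dots, 2^{-d(n)})$, and the depths of the $n$ leaves of a full binary tree satisfy the Kraft equality $\sum_{i=1}^{n} 2^{-d(i)} = 1$, so this tuple is an ordered expression of $1$ as a sum of $n$ powers of $2$. Conversely, given any integers $b_1, \dots, b_n \geq 0$ with $\sum_{i=1}^{n} 2^{-b_i} = 1$, the equality case of Kraft's theorem produces a full binary tree whose multiset of leaf depths is $\{b_1, \dots, b_n\}$, and one may then attach the label $i$ to a leaf of depth $b_i$, realizing the depth function $d(i) = b_i$. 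These two assignments are mutually inverse, so $\spac{*}$ equals the number of ways to write $1$ as an ordered sum of $n$ powers of $2$, as claimed. I expect the main obstacle to be this backward direction --- verifying that every tuple of exponents satisfying the Kraft equality is genuinely realized by some labeled full binary tree --- since it is the only point at which one must invoke the equality case of Kraft's inequality and check that the labels can be distributed among leaves of the prescribed depths; the depth-to-weight computation and the reduction between the three means are routine by comparison.
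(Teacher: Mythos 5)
Your proof is correct and takes essentially the same route as the paper's: reduce all three means to the arithmetic mean via the same isomorphisms, compute $t^{*}(a_1,\dots,a_n)=\sum_{i=1}^n 2^{-d_{T_t}(i)}a_i$, identify equality of term operations with equality of depth sequences, and count these via the two-way correspondence (Kraft equality and its converse) with ordered expressions of $1$ as a sum of $n$ powers of $2$. The only differences are cosmetic: you argue directly on full linear terms where the paper passes through bracketings and their $\SS_n$-orbits, and you invoke Kraft's theorem explicitly where the paper cites the same fact as known and provable by induction.
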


\begin{proof}
As observed by Cs\'{a}k\'{a}ny and Waldhauser~\cite{AssociativeSpectra1}, the groupoid $\RR$ with the arithmetic mean $(x+y)/2$ is isomorphic to the groupoid $\RR_{+}$ with the geometric mean $\sqrt{xy}$ via $x \mapsto e^x$, and the groupoid $\RR_{+}$ with arithmetic mean $(x+y)/2$ is isomorphic to the groupoid $\RR_{+}$ with the harmonic mean $( ( x^{-1}+y^{-1} ) /2 )^{-1}$ via $x \mapsto x^{-1}$.
Therefore we may assume $\mathbf{G} = (\RR,{*})$ with $x*y := (x+y)/2$.

A bracketing $t \in B_n$ induces the operation $t^\mathbf{G}(a_1, \dots, a_n) = \sum_{i=1}^n 2^{-d_i} a_i$ over the arithmetic mean, where $d_i$ is the depth of the $i$\hyp{}th leaf in $T_t$.
Thus two bracketings over $X_n$ induce the same operation on $\mathbf{G}$ if and only if their corresponding binary trees have the same depth sequence (which means that the trees coincide).
It follows that the operations on $\mathbf{G}$ induced by the full linear terms over $X_n$ are in a one\hyp{}to\hyp{}one correspondence with
the sequences belonging to the union of the $\SS_n$\hyp{}orbits of all depth sequences of binary trees with $n$ leaves.
It is known and can be shown by induction that any sequence $(d_1, \dots, d_n)$ in these orbits must satisfy the condition $1 = 2^{-d_1} + \dots + 2^{-d_n}$, and any sequence of positive integers satisfying this condition must be a permutation of the depth sequence of a binary tree.
The result follows.
\end{proof}

Next, we consider the \emph{rock\hyp{}paper\hyp{}scissors operation} $*$ defined on the set $\{\text{rock}, \text{paper}, \text{scissors}\}$ by $x * y = y * x := x$ if $x$ beats $y$ or $x=y$.
Rock beats scissors, scissors beat paper, and paper beats rock.

\begin{proposition}\label{prop:rock-paper-scissors}
For the rock\hyp{}paper\hyp{}scissors operation $*$, we have $\spac{*} = D_{n-1}$ and $\spa{*} = C_{n-1}$ for all $n \geq 1$.
\end{proposition}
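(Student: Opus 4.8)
The plan is to establish the two equalities separately, deriving the second from the first. Since $*$ is commutative, the bound $\spac{*} \le D_{n-1}$ from Section~\ref{sec:commutative} applies, and any two full linear terms whose underlying unordered binary trees $T_s^{\mathrm u}, T_t^{\mathrm u}$ are isomorphic already induce the same operation; so the whole problem reduces to the reverse inequality, namely that non-isomorphic labeled unordered binary trees on $\nset{n}$ induce distinct term operations on $\mathbf{G}$. Once $\spac{*} = D_{n-1}$ is known, Lemma~\ref{lem:com:large-implies-na} immediately yields $\spa{*} = C_{n-1}$, so I would not argue the associative spectrum directly. Throughout I would exploit two structural facts about $*$: it is \emph{conservative}, $x * y \in \{x,y\}$, so every tree evaluates to a value sitting at one of its leaves; and the values cycle, with rock beating scissors, scissors beating paper, and paper beating rock.

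To separate two non-isomorphic trees $S$ and $T$ I would work with their \emph{clades}, i.e.\ the leaf sets of their subtrees, recalling that a labeled unordered binary tree is determined by its set of clades. Since $S \not\cong T$, some set is a clade of one but not the other; swapping $S$ and $T$ if necessary, choose $A$ to be an inclusion-minimal clade of $S$ that is not a clade of $T$. Then $2 \le \card{A} \le n-1$, and the two child clades $A', A''$ of the node of $S$ with leaf set $A$ (so $A = A' \sqcup A''$) are clades of $S$ strictly smaller than $A$; by minimality they are clades of $T$ as well. This is the key combinatorial input: $A'$ and $A''$ are subtrees of both $S$ and $T$, siblings in $S$ but not in $T$.

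The separating assignments are the two cyclic colourings: colour all leaves of $A'$ with scissors, all leaves of $A''$ with paper, and every remaining leaf with rock, together with the colouring obtained by interchanging scissors and paper on $A'$ and $A''$. Because $A', A''$ are monochromatic clades in both trees, each evaluates to its own colour. In $S$ the node realising $A$ evaluates to $\text{scissors} * \text{paper} = \text{scissors}$, and since this node is not the root (as $A \neq \nset{n}$), climbing toward the root it meets a sibling consisting solely of rock leaves and dies, $\text{scissors} * \text{rock} = \text{rock}$, after which the value stays rock; hence $S$ evaluates to rock under both colourings. The heart of the argument is the computation for $T$: following the scissors-coloured subtree upward it survives exactly until it first meets a rock (scissors beats paper but loses to rock), whereas the paper-coloured subtree survives every rock on its way up. Letting $B$ be the least common ancestor in $T$ of the nodes carrying $A'$ and $A''$, one finds that $T$ evaluates to rock under the first colouring only when the $A'$-node is a child of $B$ and $B$ is not the root, and to rock under the swapped colouring only when the $A''$-node is a child of $B$ and $B$ is not the root.

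This last dichotomy is the step I expect to be most delicate, but it resolves cleanly: if $T$ evaluated to rock under both colourings, then both the $A'$-node and the $A''$-node would be children of $B$, forcing the leaf set of $B$ to equal $A' \sqcup A'' = A$ and making $A$ a clade of $T$, contrary to the choice of $A$. Hence at least one colouring gives $T$ a value different from rock while $S$ evaluates to rock, so $S$ and $T$ induce distinct operations. This proves $\spac{*} \ge D_{n-1}$, whence $\spac{*} = D_{n-1}$, and Lemma~\ref{lem:com:large-implies-na} completes the proof with $\spa{*} = C_{n-1}$.
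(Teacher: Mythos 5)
Your proposal is correct, but it takes a genuinely different route from the paper's proof. Both arguments reduce, via commutativity and Lemma~\ref{lem:com:large-implies-na}, to showing that $T_s^\mathrm{u} \neq T_t^\mathrm{u}$ forces $s^{*} \neq t^{*}$, but the paper does this by induction on $n$: it picks two leaves $x_j, x_k$ that are siblings in $T_s$ and splits into two cases --- if they are also siblings in $T_t$, both pairs are contracted to a single new variable and the induction hypothesis applies; if not, a single separating assignment ($x_j \mapsto \text{rock}$, $x_k \mapsto \text{scissors}$, all other variables $\mapsto \text{paper}$) makes $t$ evaluate to scissors and $s$ to paper. Your argument is non-inductive: the choice of an inclusion-minimal clade $A$ of $S$ that is not a clade of $T$ --- whose child clades $A'$, $A''$ are then clades of both trees --- plays the role that induction plays in the paper, and the pair of cyclic colourings together with the dichotomy (rock under both colourings would force $A' \sqcup A'' = A$ to be a clade of $T$) replaces the paper's case split. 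Your computations check out: $S$ evaluates to rock under both colourings because the node realising $A$ is not the root, while in $T$ the scissors block survives only until its first rock sibling and the paper block survives all rocks, so $T$ evaluates to rock precisely when the scissors-coloured node is a child of the least common ancestor $B$ and $B$ is not the root, and both colourings cannot satisfy this simultaneously. What the paper's induction buys is self-containedness: it never needs the fact that a leaf-labeled unordered rooted binary tree is determined by its set of clades (reconstruction of a tree from its laminar family of subtree leaf sets), which your proof invokes without justification --- it is standard, but in a final writeup it should be proved or referenced. What your approach buys is a direct, global argument with no induction bookkeeping, an explicit description of the separating assignments, and a clean identification of the combinatorial obstruction, namely $A$ failing to be a clade of $T$.
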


\begin{proof}
By Lemma~\ref{lem:com:large-implies-na}, it suffices to prove $\spac{*} = D_{n-1}$ for all $n \geq 1$.
We proceed by induction on $n$.
Let $s, t \in F_n$.
Since $*$ is commutative, it suffices to show that if $T_s^\mathrm{u} \neq T_t^\mathrm{u}$ are distinct then $s^{*} \neq t^{*}$.
The claim clearly holds for $1 \leq n \leq 2$.
Let now $n \geq 3$, and assume that the claim holds for terms with fewer than $n$ variables.
Let $x_j$ and $x_k$ be two leaves with a common parent in $T_s$.

First, suppose that $x_j$ and $x_k$ also share a parent in $T_t$.
Then deleting $x_j$ and $x_k$ from both $T_s$ and $T_t$ and labeling their parent with a new variable will yield binary trees $U_s$ and $U_t$ with $n-1$ labeled leaves.
Since $T_s^\mathrm{u} \neq T_t^\mathrm{u}$ implies $U_s^\mathrm{u} \neq U_t^\mathrm{u}$, we have $(t_{U_s})^{*} \neq (t_{U_t})^{*}$ by the induction hypothesis.
It follows that $s^{*} \neq t^{*}$.

Next, suppose that $x_j$ and $x_k$ do not share a parent in $T_t$.
Let $a$ be the first common ancestor of $x_j$ and $x_k$ in $T_t$, let $S$ be the subtree of $T_t$ rooted at $a$, and let $R$ and $R'$ be the subtrees of $T_t$ rooted at the two children of $a$.
We may assume, without loss of generality, that $R$ contains $x_j$ and at least one other leaf, and $R'$ contains $x_k$.
Then using an assignment $h$ with $h(x_j) = \text{rock}$, $h(x_k) = \text{scissors}$, and $h(x_i) = \text{paper}$ for all $i \in \nset{n} \setminus \{j,k\}$, we have $h(t_R) = \text{paper}$, $h(t_{R'}) = \text{scissors}$, and thus $h(t) = \text{scissors}$.
On the other hand, we have $h(s) = \text{paper}$.
Therefore $s^{*} \neq t^{*}$.
\end{proof}

\subsection{Commutative groupoids with a neutral element}\label{sec:commutative-neutral}
For a commutative groupoid $\mathbf{G}$ with a neutral element, we show that there are only two distinct possibilities: $\mathbf{G}$ is either associative 
or totally nonassociative (see Theorem~\ref{thm:com-neutral} below). 
Examples of nonassociative commutative groupoids with a neutral element include the \emph{Jordan algebras} of $n \times n$ self\hyp{}adjoint matrices over $\RR, \CC$, or $\mathbb{H}$ (the algebra of quaternions) with a product defined by $x\circ y := (xy+yx)/2$. 
The identity matrix $I_n$ is the neutral element for this commutative algebra.

\begin{theorem}
\label{thm:com-neutral}
Let $\mathbf{G} = (G,{*})$ be a commutative groupoid with neutral element $e$.
Then either
\begin{enumerate}[label={\upshape(\roman*)}]
\item $\mathbf{G}$ is associative, in which case $\spa{\mathbf{G}} = \spac{\mathbf{G}} = 1$ for all $n \in \IN_{+}$, or
\item $\spa{\mathbf{G}} = C_{n-1}$ and $\spac{\mathbf{G}} = D_{n-1}$ for all $n \geq 1$.
\end{enumerate}
\end{theorem}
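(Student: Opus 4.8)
The plan is to split along the two stated alternatives and to funnel the entire problem into a single three‑variable identity. First I would dispose of case (i): if $\mathbf{G}$ is associative then, being also commutative, every full linear term over $X_n$ induces the same operation — the bracketing is irrelevant by associativity and the order of the variables by commutativity — so $\spac{\mathbf{G}} = \spa{\mathbf{G}} = 1$ for all $n$. It then remains to treat a commutative $\mathbf{G}$ that is \emph{not} associative and to prove the two equalities in (ii). Here I would invoke Lemma~\ref{lem:com:large-implies-na}: it suffices to establish $\spac{\mathbf{G}} = D_{n-1}$ for every $n$, since that lemma then yields $\spa{\mathbf{G}} = C_{n-1}$ automatically. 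Concretely, the target is to show that whenever $s, t \in F_n$ have non‑isomorphic labeled unordered trees $T_s^\mathrm{u} \neq T_t^\mathrm{u}$, then $s^\mathbf{G} \neq t^\mathbf{G}$.

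The algebraic heart of the argument is that, \emph{for a commutative groupoid}, non‑associativity is equivalent to the failure of the identity $(x*y)*z \approx (x*z)*y$. One direction is immediate; for the other I would prove the contrapositive, assuming $(x*y)*z = (x*z)*y$ holds identically and computing, via commutativity,
\[
x*(y*z) = (y*z)*x = (y*x)*z = (x*y)*z,
\]
so that $\mathbf{G}$ is associative. Hence from non‑associativity I extract elements $a, b, c \in G$ with $(a*b)*c \neq (a*c)*b$, and a short check shows that this one inequality already separates any two of the three ``rooted triple'' values $(g_1 g_2)g_3$, $(g_1 g_3)g_2$, $(g_2 g_3)g_1$ under a suitable assignment: after renaming the variables using commutativity, each of the three separations is exactly an instance of the failure of $(x*y)*z \approx (x*z)*y$.

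The second ingredient is a collapsing device exploiting the neutral element: if in a binary tree all leaves outside a marked set $M$ are assigned $e$, then the value equals that of the minimal subtree connecting $M$ (with degree‑two vertices suppressed), because a subtree containing no marked leaf evaluates to $e$ (as $e*e=e$) and one containing a single marked leaf of value $g$ evaluates to $g$ (as $g*e = e*g = g$). With this in hand I would run an induction on $n$ parallel to the rock--paper--scissors proof. Pick a cherry $\{x_j, x_k\}$ of $T_s$. If it is also a cherry of $T_t$, contract it to a fresh leaf in both trees; the contracted trees remain non‑isomorphic, the induction hypothesis supplies an assignment over $\mathbf{G}$ separating them, and I lift it by assigning $h(x_j)$ the required value and $h(x_k)=e$. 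If $\{x_j,x_k\}$ is \emph{not} a cherry of $T_t$, let $v$ be the least common ancestor of $x_j,x_k$ in $T_t$; one of its two principal subtrees contains $x_j$ together with a further leaf $x_\ell$, so the triple $\{x_j,x_k,x_\ell\}$ is grouped as $\{x_j,x_\ell\}$ in $T_t$ but as $\{x_j,x_k\}$ in $T_s$. Assigning $e$ to all variables outside $\{x_j,x_k,x_\ell\}$ and the separating values $a,b,c$ inside, the collapsing lemma reduces $s$ and $t$ to the two distinct rooted‑triple values, whence $s^\mathbf{G} \neq t^\mathbf{G}$.

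The main obstacle is this last case: it demands the correct combinatorial bookkeeping — checking that the chosen triple really induces different groupings in $T_s$ and $T_t$, and that the neutral‑element collapse applies verbatim inside the relevant subtrees — together with the reduction of the resulting inequality to the single identity $(x*y)*z \approx (x*z)*y$. Once the collapsing lemma and the identity equivalence are in place, everything else (the base cases $n \le 2$, the contraction step, and case (i)) is routine.
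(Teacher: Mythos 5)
Your proposal is correct, but it takes a genuinely different route from the paper's own proof, so a comparison is worthwhile. Both proofs agree on the skeleton: dispose of the associative case, invoke Lemma~\ref{lem:com:large-implies-na} to reduce (ii) to showing $\spac{\mathbf{G}} = D_{n-1}$, extract a three\hyp{}variable witness of nonassociativity, and induct on $n$ using the neutral element to deactivate unused variables. The differences are in the two key ingredients. First, your inductive decomposition follows the rock--paper--scissors proof (Proposition~\ref{prop:rock-paper-scissors}): you case\hyp{}split on whether a cherry $\{x_j,x_k\}$ of $T_s$ is also a cherry of $T_t$, contracting the cherry (with the lift $x_j \mapsto g(y)$, $x_k \mapsto e$ replacing the idempotence used for rock--paper--scissors) or else passing to the least common ancestor and collapsing to a three\hyp{}leaf configuration. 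The paper instead splits on whether the \emph{top\hyp{}level variable partitions} $\{\var(s_L),\var(s_R)\}$ and $\{\var(t_L),\var(t_R)\}$ coincide: if they differ, it picks $x_j \in \var(s_L)\cap\var(t_L)$, $x_k \in \var(s_R)\cap\var(t_L)$, $x_\ell \in \var(s_R)\cap\var(t_R)$ and evaluates directly to $a(bc)$ versus $(ab)c$; if they agree, it applies the induction hypothesis to $s_L$ versus $t_L$ and extends the separating assignment by $e$ on the right. Second, your observation that for commutative groupoids nonassociativity is \emph{equivalent} to the failure of $(x*y)*z \approx (x*z)*y$ is not in the paper; the paper works from $a(bc) \neq (ab)c$ and verifies the three pairwise separations of the $n=3$ terms by three separate hand computations, whereas your reformulation packages all three (and your LCA case) as instances of one failed identity --- a cleaner bookkeeping device, and your verification of it ($x*(y*z) = (y*z)*x = (y*x)*z = (x*y)*z$) is correct. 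What the paper's route buys is that it never needs the cherry/LCA combinatorics or an explicit collapsing lemma (though it does use the collapse implicitly when asserting $h(s_L)*h(s_R) = a(bc)$, and it needs the small set\hyp{}theoretic fact about two distinct two\hyp{}block partitions); what your route buys is uniformity --- the same induction skeleton covers both Proposition~\ref{prop:rock-paper-scissors} and this theorem, with the neutral element playing the role that idempotence played there --- at the cost of making the tree\hyp{}surgery arguments explicit. Both are complete and correct.
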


\begin{proof}
If $\mathbf{G}$ is associative and commutative, then all full terms in $F_n$ induce the same term operation on $\mathbf{G}$ and so $\spa{\mathbf{G}} = \spac{\mathbf{G}} = 1$ for all $n \in \IN_{+}$.

Assume now that $\mathbf{G}$ is not associative.
Then there exist elements $a, b, c \in G$ such that $a(bc) \neq (ab)c$.
By Lemma~\ref{lem:com:large-implies-na}, it suffices to prove $\spac{\mathbf{G}} = D_{n-1}$ for all $n \geq 1$.
We are going to show that $s^\mathbf{G} \neq t^\mathbf{G}$ whenever $s, t \in F_n$ are noncongruent terms modulo the equational theory $\Sigma^\mathrm{c}_{X_n}$ of commutative groupoids, i.e., $T_s^\mathrm{u} \neq T_t^\mathrm{u}$.
The claim holds trivially for $n=1$ and $n=2$, so we may assume $n \geq 3$.

Consider now the case $n = 3$.
Modulo $\Sigma^\mathrm{c}_{X_3}$, there are three distinct full linear terms over $X_3$, namely
$t_1 := x_1 (x_2 x_3)$, $t_2 := x_2 (x_1 x_3)$, $t_3 := x_3 (x_1 x_2)$,
and the term functions they induce on $\mathbf{G}$ are distinct because
\begin{align*}
& t_1^\mathbf{G}(a,c,b) = a(cb) = a(bc) \neq (ab)c = c(ab) = t_2^\mathbf{G}(a,c,b), \\
& t_1^\mathbf{G}(a,b,c) = a(bc) \neq (ab)c = c(ab) = t_3^\mathbf{G}(a,b,c), \\
& t_2^\mathbf{G}(b,a,c) = a(bc) \neq (ab)c = c(ba) = t_3^\mathbf{G}(b,a,c).
\end{align*}

Assume now that $n \geq 4$ and that the claim holds for terms with fewer than $n$ variables.
We have $s = (s_L s_R)$ and $t = (t_L t_R)$ for subterms $s_L$, $s_R$, $t_L$, $t_R$.
Note that the term $t' := (t_R t_L)$ is congruent to $t$ modulo $\Sigma^\mathrm{c}_{X_n}$, so we may consider $t'$ in place of $t$ if necessary.
Observe that $\{\var(s_L), \var(s_R)\}$ and $\{\var(t_L), \var(t_R)\}$ are partitions of $\nset{n}$.

Consider first the case when $\{\var(s_L), \var(s_R)\} \neq \{\var(t_L), \var(t_R)\}$.
By taking, if necessary, the term $t'$ in place of $t$, and by changing the roles of $s$ and $t$, if necessary, it follows that there exist $j, k, \ell \in \nset{n}$ such that $x_j \in \var(s_L) \cap \var(t_L)$, $x_k \in \var(s_R) \cap \var(t_L)$, and $x_\ell \in \var(s_R) \cap \var(t_R)$.
Let $h$ be the assignment $x_j \mapsto a$, $x_k \mapsto b$, $x_\ell \mapsto c$, and $x_i \mapsto e$ for all $i \in \nset{n} \setminus \{i, j, k\}$.
Then $h(s) = h(s_L) * h(s_R) = a(bc)$ and $h(t) = h(t_L) * h(t_R) = (ab)c$, which shows that $s^\mathbf{G} \neq t^\mathbf{G}$.

Finally, consider the case when $\{\var(s_L), \var(s_R)\} = \{\var(t_L), \var(t_R)\}$.
By taking, if necessary, the term $t'$ in place of $t$, we may assume that $\var(s_L) = \var(t_L)$ and $\var(s_R) = \var(t_R)$.
We must have $s_L \neq t_L$ or $s_R \neq t_R$, say the former.
Then $s_L^\mathbf{G} \neq t_L^\mathbf{G}$ by the induction hypothesis.
Hence there is an assignment $h$ for the variables in $\var(s_L) = \var(t_L)$ such that $h(s_L) \neq h(t_L)$.
Extend $h$ into an assignment $h'$ on $X_n$ by defining $x_i \mapsto e$ for all $x_i \in \var(s_R) = \var(t_R)$.
Then
\[ h'(s) = h'(s_L) * h'(s_R) = h(s_L) * e = h(s_L) \neq h(t_L) = h(t_L) * e = h'(t_L) * h'(t_R) = h'(t) \]
which shows that $s^\mathbf{G} \neq t^\mathbf{G}$.
This concludes the proof.
\end{proof}

\section{Anticommutative algebras}\label{sec:anticommutative}

We now turn our attention to ac\hyp{}spectra of bilinear products in algebras over a field.
An algebra over a field $\FF$ of characteristic not $2$ is said to be \emph{anticommutative} if it satisfies the identity $xy \approx -yx$, which implies the identity $xx \approx 0$ since $xx \approx -xx$.

\subsection{Commutative version of a bilinear product}
Given an anticommutative algebra over a field, we can turn the product $*$ into a commutative bilinear product $\circledast$ as follows.

\begin{definition}
Let $\mathbf{A} = (A, {+}, {-}, 0, (\lambda_a)_{a \in F}, {*})$ be an algebra over a field $\FF$.
Let $g$ be any choice function on the collection $C := \{ \{ a, -a \} \mid a \in A \}$ and let $f \colon A \to C$, $f(a) := \{a, -a\}$.
(Recall that a \emph{choice function} on a collection $C$ of subsets of some base set $X$ is a mapping $g \colon C \to X$ such that $g(S) \in S$ for every $S \in C$.)
Note that any map $f$ arising in this way is \emph{even,} i.e., it satisfies $f(a) = f(-a)$ for all $a \in A$.
Now we can fix a basis $B$ of the vector space $A$, and for all basis vectors $a, b \in B$, we define $a \circledast b := g(f(a * b))$.
This partial operation extends to a commutative bilinear product on $A$.
(It is well known that any partial operation on $A$ with domain $B$ extends in a unique way to a bilinear product on $A$, and a bilinear product is commutative if and only if its restriction to the basis is commutative.)
Such a product $\circledast$ will be referred to as a \emph{commutative version} of $*$.
\end{definition}

\begin{theorem}\label{thm:anticommutative}
Let $\mathbf{A} = (A, {+}, {-}, 0, (\lambda_a)_{a \in F}, {*})$ be an anticommutative algebra over a field $\FF$, and assume that the commutative version $\circledast$ of $*$ satisfies $s^{\circledast} \ne \pm t^\circledast$ for any terms $s,t\in F_n$ with $T^\mathrm{u}_s \ne T^\mathrm{u}_t$.
\begin{enumerate}[label={\upshape(\roman*)}]
\item
We have $s^{\mathrm{ac}}_n(\circledast) = D_{n-1}$ and $\spa{\circledast} = C_{n-1}$ for all $n\ge1$.
\item
We have $s^{\mathrm{ac}}_n(*) = 2s^{\mathrm{ac}}_n(\circledast) = 2D_{n-1}$ for all $n\ge2$ and $\spa{*} = C_{n-1}$ for all $n\ge1$.
\end{enumerate}
\end{theorem}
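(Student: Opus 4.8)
The plan is to prove both parts by relating the term operations of the anticommutative product $*$ and its commutative version $\circledast$ to unordered leaf-labeled binary trees, using the key hypothesis that distinct unordered tree shapes yield operations that differ even up to a global sign (i.e.\ $s^\circledast \neq \pm t^\circledast$). The central observation I would exploit is that for an anticommutative bilinear operation, swapping the two arguments of a single multiplication node introduces exactly one factor of $-1$, so the value of a full linear term is determined, \emph{up to sign}, by the underlying unordered binary tree, and the sign itself is determined by how many left/right swaps are needed to pass from one ordered embedding to another.

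\emph{Proof of (i).} For the commutative version $\circledast$, I would first note that $\circledast$ is commutative by construction, so Lemma~\ref{lem:com:large-implies-na} reduces the claim $\spac{\circledast} = D_{n-1}$ to showing totally nonassociativity, and the upper bound $\spac{\circledast} \le D_{n-1}$ is automatic from commutativity (every groupoid term operation factors through unordered trees). For the lower bound, I would take $s,t \in F_n$ with $T^\mathrm{u}_s \neq T^\mathrm{u}_t$ and invoke the hypothesis directly: it gives $s^\circledast \neq \pm t^\circledast$, in particular $s^\circledast \neq t^\circledast$. Hence distinct unordered trees induce distinct operations, so $\spac{\circledast} = D_{n-1}$, and then Lemma~\ref{lem:com:large-implies-na} yields $\spa{\circledast} = C_{n-1}$ for all $n \ge 1$.

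\emph{Proof of (ii).} This is the main content. The idea is that each ordered binary tree carries a well-defined sign relative to its underlying unordered shape, so the anticommutative operation $*$ ``sees'' both the unordered tree $T^\mathrm{u}$ and an extra $\pm$ sign. Concretely, I would establish a sign function: for a full linear term $s$, one has $s^* = \varepsilon(s)\, s^\circledast$ where $\varepsilon(s) \in \{+1,-1\}$ records the parity of the number of right-children encountered along the tree (equivalently, the product of $(-1)$ over the multiplication nodes whose argument order disagrees with the canonical order chosen for $\circledast$). The upper bound $\spac{*} \le 2 D_{n-1}$ then follows because each unordered tree contributes at most two operations, $+s^\circledast$ and $-s^\circledast$. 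For the lower bound, I would argue that \emph{both} signs are actually realized for each shape (when $n \ge 2$, there is always at least one multiplication node whose two arguments can be interchanged, flipping $\varepsilon$), so each unordered tree yields exactly two distinct operations; here the hypothesis $s^\circledast \neq \pm t^\circledast$ for $T^\mathrm{u}_s \neq T^\mathrm{u}_t$ guarantees that operations coming from \emph{different} shapes never collide, even across signs. Combining, the $2D_{n-1}$ candidate operations $\pm s^\circledast$ (one pair per unordered shape) are pairwise distinct, giving $\spac{*} = 2D_{n-1}$ for $n \ge 2$. Finally $\spa{*} = C_{n-1}$ follows either from the totally nonassociative conclusion of part (i) together with the observation that bracketings (fixed variable order) give distinct depth data, or directly from Lemma~\ref{lem:com:large-implies-na} applied after noting that restricting to bracketings removes the permutation freedom and the sign is determined by the bracketing alone.

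\emph{Main obstacle.} The delicate point is making the sign function $\varepsilon$ well-defined and verifying that within a single unordered shape both signs genuinely occur as operations (not merely as terms), while distinct shapes stay separated across signs. The hypothesis is tailored precisely to handle the cross-shape separation, so the real work is the intra-shape argument: I would show that for $n \ge 2$ one can swap the two subterms at some internal node, producing a congruent unordered tree but opposite sign, and that this swapped operation is genuinely different from the original because $s^\circledast \neq -s^\circledast$ (which holds since $s^\circledast$ is not identically zero—anticommutativity gives $xx \approx 0$ but a full linear term with distinct variables need not vanish, and this nonvanishing must be checked). Ensuring $s^\circledast \not\equiv 0$ is the subtle step, and I expect it to follow from the structure of $\circledast$ on a basis, but it deserves explicit justification.
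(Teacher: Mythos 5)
Your part (i) is correct and coincides with the paper's argument: commutativity gives $\spac{\circledast}\le D_{n-1}$, the hypothesis gives the matching lower bound, and Lemma~\ref{lem:com:large-implies-na} then yields $\spa{\circledast}=C_{n-1}$. Part (ii), however, is built on a false identity. You posit a sign function $\varepsilon$ with $s^{*}=\varepsilon(s)\,s^{\circledast}$, where $\varepsilon(s)\in\{\pm1\}$ depends only on the term $s$ (parity of right children, i.e.\ of argument swaps relative to a ``canonical order chosen for $\circledast$''). No such sign exists, already for $n=2$: for the cross product on $\RR^3$ and its commutative version $\Join$ one has $\mathbf{i}\times\mathbf{j}=\mathbf{k}=\mathbf{i}\Join\mathbf{j}$ but $\mathbf{i}\times\mathbf{k}=-\mathbf{j}=-(\mathbf{i}\Join\mathbf{k})$, so the operation $(x_1x_2)^{*}={\times}$ equals neither $(x_1x_2)^{\circledast}={\Join}$ nor its negative. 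The point is that $\circledast$ is \emph{not} obtained from $*$ by fixing an order of arguments: it is defined by applying a choice function independently to each unordered pair of basis vectors, so the sign discrepancy between $*$ and $\circledast$ depends on the inputs, not just on the term. Consequently your upper bound (``each shape contributes at most $+s^{\circledast}$ and $-s^{\circledast}$''), your cross-shape separation, and your derivation of $\spa{*}=C_{n-1}$ all rest on the identification $\{s^{*},-s^{*}\}=\{s^{\circledast},-s^{\circledast}\}$, which is simply not available.

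The paper's proof does the sign bookkeeping entirely \emph{within} one shape class and never compares $s^{*}$ with $s^{\circledast}$: if $\widetilde{t}$ is obtained from $t$ by swapping the two arguments at a single node, then $\widetilde{t}^{\circledast}=t^{\circledast}$ while $\widetilde{t}^{*}=-t^{*}$ (anticommutativity at that node, bilinearity to propagate the sign upward), and any two terms with the same unordered tree are linked by a sequence of such swaps. Hence each unordered shape contributes exactly the pair $\{t^{*},-t^{*}\}$; this is the correct version of your ``both signs are realized'' step, and it shows that the nonvanishing you need is $t^{*}\neq 0$ rather than $s^{\circledast}\neq 0$. What still requires an argument---and what your proposal does not supply (the paper's own formulation, ``namely, $f$ and $-f$,'' is also terse on exactly this point)---is the cross-shape statement that $T^{\mathrm{u}}_s\neq T^{\mathrm{u}}_t$ implies $s^{*}\neq\pm t^{*}$: the hypothesis speaks only about $\circledast$, and some mechanism is needed to transfer it to $*$. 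In the paper's applications this mechanism is what Lemma~\ref{lem:anticommutative} really provides: the separating assignments take values in a set $U$ of basis elements whose products stay in scalar multiples of $U$, and on such values $*$ and $\circledast$ do agree up to nonzero scalar factors, so ``one evaluation is zero and the other is not'' holds for $*$ as well. Without some such transfer, part (ii) of your proposal does not close.
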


\begin{proof}
Let $A$ be the universe of the algebra.
Since $s^{\circledast} \ne \pm t^\circledast$ for any terms $s,t\in F_n$ with $T^\mathrm{u}_s \ne T^\mathrm{u}_t$, the ac-spectrum $\spac{\circledast}$ of the commutative operation $\circledast$ must attain the upper bound $D_{n-1}$ for commutative operations.
This implies $\spa{\circledast} = C_{n-1}$ for all $n\ge1$ by Lemma~\ref{lem:com:large-implies-na}.

Assume $n\ge2$ below.
For any $t \in F_n$, if $(t_1 t_2)$ is a subterm of $t$ and $\widetilde{t}$ is the term obtained from $t$ by replacing $(t_1 t_2)$ by $(t_2 t_1)$, then $\widetilde{t}^{\circledast} = t^{\circledast}$ and $\widetilde{t}^* = -t^*$.
Any terms $s, t \in F_n$ such that $T_s^\mathrm{u} = T_t^\mathrm{u}$ can be obtained from each other by a sequence of such swaps of subterms.
On the other hand, for any terms $s, t \in F_n$ such that $T^\mathrm{u}_s \ne T^\mathrm{u}_t$, we have $s^{\circledast} \ne \pm t^\circledast$ by the hypothesis.

It follows that for each $f \in F_n^{\circledast} := \{t^{\circledast} \mid t \in F_n\}$, the terms in $F_n$ that give rise to $f$ induce two distinct functions on $(A,*)$, namely, $f$ and $-f$, and for distinct $f, g \in F_n^{\circledast}$, any terms in $F_n$ giving rise to $f$ and $g$ induce distinct functions on $(A,*)$.
Therefore the cardinality of $F_n^* := \{t^* \mid t \in F_n\}$ is exactly two times that of $F_n^{\circledast}$, i.e., $\spac{*} = 2 \spac{\circledast}$ for all $n\ge2$.

Finally, let $s$ and $t$ be distinct bracketings. 
Let $T_s$ and $T_t$ be the corresponding ordered binary trees with leaves labeled by $x_1,\ldots,x_n$ from left to right.
Then the underlying unordered leaf-labeled
trees $T_s^{\mathrm{u}}$ and $T_t^{\mathrm{u}}$ must be distinct. 
Therefore $s^\circledast \neq t^\circledast$ by our assumption, and hence $s^* \neq t^*$ by what we have shown above.
This shows that $\spa{*} = C_{n-1}$ for all $n\ge1$. 
\end{proof}

To apply Theorem~\ref{thm:anticommutative} to some familiar anticommutative algebras, we need a lemma.

\begin{lemma}\label{lem:anticommutative}
Let $\mathbf{A} = (A, {+}, {-}, 0, (\lambda_a)_{a \in F}, {*})$ be an anticommutative algebra over a field $\mathbb{F}$, and let $\circledast$ be the commutative version of $*$.
Suppose the following conditions hold for some $U \subseteq A \setminus \{0\}$ and $P \subseteq U \times U$.
\begin{enumerate}[label={\upshape(\roman*)}]
\item\label{lem:anticommutative:i}
For any $w\in U$, there exists $u\in U$ such that $(u,w)\in P$,
\item\label{lem:anticommutative:ii}
There exists $(u,v)\in P$ such that $(c(u\circledast v), w)\in P$ for some scalar $c\ne0$ and some $w\in U$.
\item\label{lem:anticommutative:iii}
There exists $(u,v)$ and $(u,w)$ in $P$ such that $(c(v\circledast w),z)\in P$ for some scalar $c\ne0$ and some $z\in U$.
\item\label{lem:anticommutative:iv}
For any $n \geq 2$, $t \in F_n$, $j \in \nset{n}$, and $(u,w)\in P$, there exist $u_1,\ldots,u_n\in U$ such that $u_j = u$ and $t^{\circledast}(u_1, \ldots, u_n) = cw$ for some scalar $c\ne0$. 
\end{enumerate}
Then $s^{\circledast} \ne \pm t^\circledast$ for any terms $s, t \in F_n$ with $T^\mathrm{u}_s \neq T^\mathrm{u}_t$.
\end{lemma}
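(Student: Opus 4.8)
The plan is to argue by induction on $n$, and throughout to replace the goal $s^{\circledast}\ne\pm t^{\circledast}$ by the cleaner requirement of producing one assignment $h\colon X_n\to U$ under which $h(s)$ and $h(t)$ are \emph{linearly independent}. Two nonzero vectors are linearly dependent exactly when they are proportional, so independence simultaneously rules out $h(s)=h(t)$ and $h(s)=-h(t)$, which is what the statement asks for. I would also record at the outset the structural fact that makes all the bookkeeping tractable: since each variable occurs exactly once in a full linear term, bilinearity of $\circledast$ forces every term operation $t^{\circledast}$ to be \emph{multilinear} in its arguments; in particular, rescaling a single leaf value by a nonzero scalar rescales the whole output by that scalar, so the conclusion is insensitive to the stray nonzero scalars $c$ that conditions \ref{lem:anticommutative:ii}--\ref{lem:anticommutative:iv} introduce. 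The base cases $n\le 2$ are vacuous, since there is a unique unordered binary tree on at most two labelled leaves.

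For the inductive step ($n\ge 3$) I would invoke the standard dichotomy for distinct leaf-labelled unordered binary trees: as $T^{\mathrm u}_s\ne T^{\mathrm u}_t$, either \textbf{(B)} $T_s$ and $T_t$ have exactly the same cherries (pairs of leaves sharing a parent), or \textbf{(A)} some cherry of one tree is not a cherry of the other. Case (B) reduces the arity. I would contract a common cherry $\{x_p,x_q\}$ to a single new leaf $z$ in both trees, obtaining full linear terms $\bar s,\bar t$ on $n-1$ variables with $T^{\mathrm u}_{\bar s}\ne T^{\mathrm u}_{\bar t}$ (the contraction is well defined because $\circledast$ is commutative). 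The induction hypothesis then yields an assignment $\bar h$ with values in $U$ making $\bar h(\bar s)$ and $\bar h(\bar t)$ independent; writing $\bar h(z)=y\in U$, condition \ref{lem:anticommutative:i} gives $u\in U$ with $(u,y)\in P$, and condition \ref{lem:anticommutative:iv} applied to the two-variable term with the leaf fixed to $u$ gives $u'\in U$ with $u\circledast u'=cy$ for some $c\ne0$. Setting $h(x_p):=u$, $h(x_q):=u'$ and $h:=\bar h$ elsewhere, multilinearity gives $h(s)=c\,\bar h(\bar s)$ and $h(t)=c\,\bar h(\bar t)$, which remain independent.

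Case (A) is the heart of the matter, and the place where conditions \ref{lem:anticommutative:ii} and \ref{lem:anticommutative:iii} are consumed. Here a cherry $\{x_p,x_q\}$ of $T_s$ is not a cherry of $T_t$; letting $a$ be the least common ancestor of $x_p,x_q$ in $T_t$, with child subtrees $R\ni x_p$ (which I may assume has at least two leaves, swapping the two children of $a$ by commutativity if needed) and $R'\ni x_q$, the two trees realise the two sides of an instance of the associative law: in $T_s$ the leaves $x_p,x_q$ meet \emph{before} they meet the rest, whereas in $T_t$ the leaf $x_p$ is first absorbed into $R$ and only afterwards meets $x_q$. Using the fact that $h(s)$ and $h(t)$ are each linear in a single chosen leaf value once the others are fixed, I would take the witnessing triple $u,v,w\in U$ furnished by conditions \ref{lem:anticommutative:ii}--\ref{lem:anticommutative:iii}, assign $x_p,x_q$ from this triple, and repeatedly apply condition \ref{lem:anticommutative:iv} --- fed by the $P$-pairs produced in \ref{lem:anticommutative:i}--\ref{lem:anticommutative:iii} --- to drive the subtree $R$ and all the sibling subtrees lying off the relevant paths to prescribed nonzero $U$-multiples. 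After these collapses the two evaluations reduce, up to nonzero scalars, to the left- and right-associated products built from $u,v,w$, and conditions \ref{lem:anticommutative:ii}--\ref{lem:anticommutative:iii} are exactly the hypotheses guaranteeing that these two products are nonzero and non-proportional, hence independent.

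The hard part will be the shared-assignment bookkeeping in Case (A). Because an anticommutative algebra has no neutral element, I cannot mimic the proofs of Theorem~\ref{thm:com-neutral} or Proposition~\ref{prop:rock-paper-scissors} and send the auxiliary variables to a value that is simultaneously inert in both $T_s$ and $T_t$; every collapse must instead be routed through the single tool \ref{lem:anticommutative:iv}, and the very same assignment of the non-distinguished leaves feeds both evaluations at unrelated positions of the two trees. Making conditions \ref{lem:anticommutative:i}--\ref{lem:anticommutative:iv} interlock so that one consistent choice of leaf values simultaneously realises the left association in $T_s$ and the right association in $T_t$, and so that the resulting witnesses are \emph{non-proportional} rather than merely unequal (the subtlety created by the $\pm$ sign), is the crux on which the whole argument turns.
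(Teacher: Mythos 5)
Your Case (B) (contracting a shared cherry and rescaling via conditions \ref{lem:anticommutative:i} and \ref{lem:anticommutative:iv}) is sound and matches the paper's corresponding case. The gap is in Case (A), and it is fatal to the whole plan: the strengthened inductive goal --- a $U$-valued assignment making $h(s)$ and $h(t)$ \emph{linearly independent} --- is not just unproven but false in the very applications this lemma is built for. Take the commutative version $\Join$ of the cross product with $U=\{u,v,w\}$ (Lemma~\ref{lem:Join}), $n=3$, $s=(x_1x_2)x_3$, $t=x_1(x_2x_3)$. For $a,b,c\in U$ each of $(a\Join b)\Join c$ and $a\Join(b\Join c)$ is $0$ or a basis vector, and running through the cases gives: $a,b,c$ all distinct yields $(0,0)$; $a=b\neq c$ yields $(0,c)$; $a=c\neq b$ yields $(b,b)$; $b=c\neq a$ yields $(a,0)$; $a=b=c$ yields $(0,0)$. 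In every case the two values are proportional or one of them vanishes, and the same computation works for the $\mathfrak{sl}_2$-triple with $U=\{e,f,h\}$. So no $U$-valued assignment is ever linearly independent at $n=3$, and your induction cannot even start. The root cause is your assertion that conditions \ref{lem:anticommutative:ii}--\ref{lem:anticommutative:iii} ``are exactly the hypotheses guaranteeing that these two products are nonzero and non-proportional'': they are not. Those conditions only guarantee that certain products are nonzero scalar multiples of elements of $U$, so that they can be fed back into \ref{lem:anticommutative:iv}; they say nothing whatsoever about non-proportionality of two different products.

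The missing idea is that the $\pm$ ambiguity should be broken by an \emph{asymmetric} zero/nonzero dichotomy rather than by independence. The paper proves, by induction on $n$ for linear terms with the same variable set, that some $U$-valued assignment makes one of $h(s)$, $h(t)$ equal to $0$ and the other nonzero; this already rules out $s^\circledast=\pm t^\circledast$. Concretely, in your Case (A) one assigns the \emph{same} element $u$ to both leaves $x_p,x_q$ of the cherry of $T_s$: anticommutativity gives $u\circledast u=0$, so multilinearity forces $h(s)=0$. Meanwhile in $T_t$ the leaves $x_p,x_q$ lie in different subtrees $R,R'$ below their least common ancestor, and condition \ref{lem:anticommutative:iv}, seeded by condition \ref{lem:anticommutative:ii} when one of $R,R'$ is a single leaf and by condition \ref{lem:anticommutative:iii} when both have at least two leaves (this is precisely why both hypotheses appear in the statement), produces values for the remaining variables forcing $h(t)$ to be a nonzero scalar multiple of an element of $U$. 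This weaker invariant is preserved by your Case (B) step exactly as independence would be (both values just get multiplied by the same nonzero scalar $c$), so the induction closes.
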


\begin{proof}
We will prove the slightly stronger statement that for any linear terms $s, t \in T(X_\omega)$ with $\var(s) = \var(t)$ and $T_s^\mathrm{u} \ne T_t^\mathrm{u}$, there is an assignment $h' \colon \var(s)=\var(t) \to U$ such that one of $h'(s)$ and $h'(t)$ is $0$ and the other is nonzero.
Consequently, $s^{\circledast} \ne \pm t^\circledast$ for any terms $s, t \in F_n$ with $T^\mathrm{u}_s \neq T^\mathrm{u}_t$.

We proceed by induction on $n$.
We must have $n \geq 3$ since $T_s^\mathrm{u} \ne T_t^\mathrm{u}$.
For $n = 3$, we have, without loss of generality, $s = (x_1 x_2) x_3$ and $t = x_1 (x_2 x_3)$.
Let $u$, $v$, $w$, and $c$ be as in \ref{lem:anticommutative:ii}, i.e., $(u,v) \in P$ and $(c (u \circledast v), w) \in P$; note that the latter implies that $u \circledast v \in U$.
By \ref{lem:anticommutative:iv}, there exists $z \in U$ such that $u \circledast z = v$.
Then $s^\circledast(u,u,z) = (u \circledast u) \circledast z = 0 \circledast z = 0$ and
$t^\circledast(u,u,z) = u \circledast (u \circledast z) = u \circledast v \in U$.

Assume now that $n > 3$ and that the lemma holds for linear terms with fewer than $n$ variables.
There exist two leaves labeled by $x_j$ and $x_k$ with a common parent in $T_s$.
We distinguish cases according to whether the leaves with these labels have a common parent also in $T_t$.

\vskip3pt\noindent
\textsf{Case 1}: The leaves labeled by $x_j$ and $x_k$ also share a parent in $T_t$.
Then deleting $x_j$ and $x_k$ from both $T_s$ and $T_t$ and labeling their parent with a new variable $y$ will result in two binary trees $R_s$ and $R_t$ with $n-1$ labeled leaves.
Since $T_s^\mathrm{u} \neq T_t^\mathrm{u}$ implies $R_s^\mathrm{u} \neq R_t^\mathrm{u}$, the inductive hypothesis provides an assignment $h \colon \var(t_{R_t}) \to U$ such that one of $h(t_{R_s})$ and $h(t_{R_t})$ is $0$ and the other is nonzero.
Now let $h' \colon \var(t) \to U$ be an assignment that coincides with $h$ on $\var(t_{R_t}) \setminus \{y\}$ and assign values to $x_j$ and $x_k$ such that $h'(x_j) \circledast h'(x_k) = c h(y)$ for some $c\ne0$; such values exist by \ref{lem:anticommutative:i} and \ref{lem:anticommutative:iv} with $n=2$ and $w=h(y)$.
It follows that $h'(s) = ch(t_{R_s})$ and $h'(t) = ch(t_{R_t})$, so one of $h'(s)$ and $h'(t)$ is zero and the other is nonzero.

\vskip3pt\noindent
\textsf{Case 2}: The leaves labeled by $x_j$ and $x_k$ do not share a parent in $T_t$.
Let $S$ be the subtree of $T_t$ rooted at the first common ancestor $a$ of $x_j$ and $x_k$, and let $R$ and $R'$ be the subtrees of $T_t$ rooted at the two children of $a$.
We may assume, without loss of generality, that $R$ contains $x_j$ and $R'$ contains $x_k$.

\vskip3pt\noindent
\textsf{Case 2.1}: Either $R$ or $R'$, say the former, contains only one leaf.
Then the latter must have at least two leaves.
Let $u$, $v$, and $w$ be as in \ref{lem:anticommutative:ii}.
By \ref{lem:anticommutative:iv}, there exists an assignment $h \colon \var(t_S) \to U$ such that $h(x_j) = h(x_k) = u$, $h(t_R) = u$, $h(t_{R'}) = cv$, and $h(t_S) =c (u \circledast v) \in U$ for some nonzero scalar $c$.
If $S=T_t$ then let $h' := h$.
If $S\ne T_t$ then we can use \ref{lem:anticommutative:iv} again to obtain an assignment $h' \colon \var(t) \to U$ extending $h$ such that $h'(t) = c c' w$ for some nonzero scalar $c'$.
On the other hand, we have $h'(s) = 0$ since $x_j \circledast x_k = u \circledast u = 0$.

\vskip3pt\noindent
\textsf{Case 2.2}: Both $R$ and $R'$ contain at least two leaves.
Let $u$, $v$, $w$, and $z$ be as in \ref{lem:anticommutative:iii}.
By \ref{lem:anticommutative:iv}, there exists an assignment $h \colon \var(t_S) \to U$ such that $h(x_j) = h(x_k) = u$, $h(t_R) = c v$, $h(t_{R'}) = c' w$, and $h(t_S) = cc' (v \circledast w) \in U$ for some nonzero scalars $c$ and $c'$.
If $S=T_t$ then let $h' := h$.
If $S\ne T_t$ then we can use \ref{lem:anticommutative:iv} again to obtain an assignment $h' \colon \var(t) \to U$ extending $h$ such that $h'(t) = c c' c'' z\ne 0$ for some nonzero scalar $c''$.
On the other hand, we have $h'(s) = 0$ since $x_j \circledast x_k = u \circledast u = 0$.
\end{proof}

\subsection{The cross product}

Since the cross product for a three-dimensional Euclidean vector space is anticommutative, we can determine its ac-spectrum by using a commutative operation associated with it.

\begin{definition}
Define $\Join$ on a three-dimensional real vector space $V$ with a basis $\{u,v,w\}$ by letting $x\Join x:=0$ for all $x\in \{u,v,w\}$ and $x\Join y := z$ for all distinct $x,y\in\{u,v,w\}$, where $z \in \{u,v,w\} \setminus \{x,y\}$, and extending this bilinearly from $\{u,v,w\}$ to $V$, i.e.,
\[ (\alpha u + \beta v + \gamma w) \Join (\alpha' u + \beta' v + \gamma' w) = (\beta \gamma' + \gamma \beta') u + (\gamma \alpha' + \alpha \gamma') v + (\alpha \beta' + \beta \alpha') w \]
for any scalars $\alpha$, $\beta$ and $\gamma$.
This operation occurs in recent studies of the Norton algebras of certain distance regular graphs and it is commutative and totally nonassociative~\cite[Example~3.11, Remark~5.10]{Hamming}.
\end{definition}

For $V=\mathbb{R}^3$ with $\{u,v,w\}$ being the standard basis $\{\mathbf{i}, \mathbf{j}, \mathbf{k}\}$, we have a choice function $g$ satisfying $g(\{\pm \mathbf i\})=\mathbf i$, $g(\{\pm \mathbf j\})=\mathbf j$, and $g(\{\pm \mathbf k\})=\mathbf k$, and this makes $\Join$ a commutative version of the cross product $\times$. 

To apply Theorem~\ref{thm:anticommutative}, we need to first verify the assumptions in Lemma~\ref{lem:anticommutative} for $\Join$.

\begin{lemma}\label{lem:Join}
The assumptions \ref{lem:anticommutative:i}--\ref{lem:anticommutative:iv} in Lemma~\ref{lem:anticommutative} hold for $\Join$, $U:=\{u,v,w\}$ and $P:=\{(x,y): x,y\in U, x\ne y\}$.
\end{lemma}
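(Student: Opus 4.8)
The plan is to verify the four conditions \ref{lem:anticommutative:i}--\ref{lem:anticommutative:iv} directly for the operation $\Join$, using the explicit multiplication table on the basis $U=\{u,v,w\}$: namely $x\Join x=0$ and $x\Join y=z$ whenever $\{x,y,z\}=\{u,v,w\}$. The set $P$ consists of all ordered pairs of \emph{distinct} basis vectors, and the key algebraic fact I will exploit repeatedly is that the $\Join$-product of two distinct basis vectors is again a basis vector (the third one). Conditions \ref{lem:anticommutative:i}--\ref{lem:anticommutative:iii} are small finite checks; condition \ref{lem:anticommutative:iv} is the one requiring an inductive argument on the structure of the term $t$, and I expect it to be the main obstacle.

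For the first three conditions I would argue as follows. For \ref{lem:anticommutative:i}: given any $w\in U$, pick any $u\in U$ with $u\ne w$ (possible since $\card{U}=3\ge2$), so that $(u,w)\in P$. For \ref{lem:anticommutative:ii}: take $(u,v)\in P$ with $u\ne v$; then $u\Join v\in U$ is the third basis vector, distinct from both $u$ and $v$, so $c(u\Join v)\in U$ for $c=1$, and choosing $w\in U$ distinct from $u\Join v$ gives $(c(u\Join v),w)\in P$. For \ref{lem:anticommutative:iii}: choose three distinct basis vectors and set $(u,v),(u,w)\in P$; since $v\ne w$, the product $v\Join w\in U$ is again a basis vector, and I select $z\in U$ distinct from it to obtain $(c(v\Join w),z)\in P$ with $c=1$. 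These reduce to inspecting the $3\times 3$ multiplication table, so they are routine.

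The heart of the proof is condition \ref{lem:anticommutative:iv}: for any $n\ge2$, any $t\in F_n$, any $j\in\nset{n}$, and any $(u,w)\in P$, I must produce basis-vector inputs $u_1,\dots,u_n\in U$ with $u_j=u$ and $t^\Join(u_1,\dots,u_n)=cw$ for some nonzero scalar $c$. The plan is induction on $n$ following the recursive tree structure $T_t=T_{t_L}\wedge T_{t_R}$, where $x_j$ lies (say) in the left subtree $t_L$. The crucial closure property is that $\Join$ maps pairs of distinct basis vectors to basis vectors; so I want to choose inputs making $t_L^\Join$ and $t_R^\Join$ evaluate to \emph{distinct} basis vectors $p$ and $q$, forcing $t^\Join=p\Join q$ to be the third basis vector. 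Concretely: if $t_L$ is a single leaf (the leaf $x_j$), set that input to $u$ and recursively arrange $t_R^\Join$ to be some basis vector distinct from $u$; if $t_L$ has more leaves, apply the inductive hypothesis to $t_L$ with the constraint $u_j=u$ to make $t_L^\Join$ a chosen basis vector $p$, then apply it again to $t_R$ to make $t_R^\Join=q$ for a basis vector $q\ne p$. Since $p\ne q$ gives $p\Join q=w'$, the unique remaining basis vector, I can steer the final output to the prescribed $w$ by choosing $p,q$ appropriately at the top level (there is enough freedom because from any two distinct basis vectors I obtain the third, and $w$ ranges over all of $U$).

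The main obstacle I anticipate is the bookkeeping in the inductive step of \ref{lem:anticommutative:iv}: I must guarantee that at every internal node the two subtree values land on \emph{distinct} basis vectors (otherwise their $\Join$-product is $0$ and the whole evaluation collapses), all while honoring the hard constraint $u_j=u$ at the designated leaf, and while ending at the prescribed target $w$. The inductive hypothesis gives me enough flexibility to hit any desired basis vector at the root of a subtree subject to one leaf-constraint, so the argument should go through; the care is in choosing, at each branching, a \emph{target pair} of distinct basis vectors whose product is the value demanded from above, and then passing those targets down recursively. Once \ref{lem:anticommutative:iv} is in place, the lemma follows by confirming that all four hypotheses of Lemma~\ref{lem:anticommutative} are met for $\Join$, $U$, and $P$.
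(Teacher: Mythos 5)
Your proposal is correct and takes essentially the same approach as the paper: conditions \ref{lem:anticommutative:i}--\ref{lem:anticommutative:iii} by direct inspection of the multiplication table, and condition \ref{lem:anticommutative:iv} by induction on the term structure, using the closure property that distinct basis vectors multiply to the third. Your ``steering'' step is resolved exactly as in the paper's proof: since the subtree containing $x_j$ can only be made to evaluate to a basis vector $p \neq u$, the forced choice is $p = u \Join w$ for that subtree and $q = u$ for the other, giving $p \Join q = w$.
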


\begin{proof}
It is clear that \ref{lem:anticommutative:i} holds.
For \ref{lem:anticommutative:ii}, we have $(u,v)\in P$ and $(u \Join v, u) = (w,u)\in P$. 
For \ref{lem:anticommutative:iii}, we have $(u,v), (u,w) \in P$ and $(v\Join w, v) = (u,v)\in P$.

It remains to show \ref{lem:anticommutative:iv}, that is, for any $n \geq 2$, $t \in F_n$, $j \in \nset{n}$, and $(a,b)\in P$, there is an assignment of values from $\{u, v, w\}$ to the variables occurring in $t$ such that $x_j$ gets value $a$ and $t$ evaluates to $b$.
We prove this by induction on $n$.
For $n = 2$, this follows from the definition of $\Join$.

Assume now that $n \geq 3$.
Then $t = (rs)$ for some subterms $r$ and $s$.
Assume without loss of generality that $x_j$ occurs in $r$ (the case when $x_j$ occurs in $s$ is treated similarly).
First suppose that $r = x_j$.
By the inductive hypothesis, there is an assignment $h \colon X_n \setminus \{x_j\} \to \{u, v, w\}$ such that $h(s) = a \Join b$.
By further setting $h(x_j) = a$, we get $h(t) = a \Join (a \Join b) = b$.
Now suppose that $r$ contains at least two variables.
By the inductive hypothesis, there is an assignment $h \colon X_n \to \{u, v, w\}$ such that $h(x_j) = a$, $h(r) = a \Join b$, and $h(s) = a$.
Then $h(t) = (a \Join b) \Join a = b$.
\end{proof}

Now we can determine the ac-spectra of the cross product $\times$ and its commutative version $\Join$.

\begin{corollary}\label{cor:cross}
For the cross product $\times$ on $\RR^3$ and its commutative version $\Join$, we have 
\begin{itemize}
\item
$\spac{\Join} = D_{n-1}$ for all $n \geq 1$, 
\item
$\spac{\times} = 2D_{n-1}$ for all $n \geq 2$, and
\item
$\spa{\times} = \spa{\Join} = C_{n-1}$ for all $n\ge1$.
\end{itemize}
\end{corollary}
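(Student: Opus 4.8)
The plan is to treat this as a direct corollary of the three preceding results, assembling them in sequence with essentially no new computation. First I would note that the cross product $\times$ on $\RR^3$ is anticommutative over $\RR$, a field of characteristic $\neq 2$, since $\mathbf{a} \times \mathbf{b} = -(\mathbf{b} \times \mathbf{a})$; thus Theorem~\ref{thm:anticommutative} and Lemma~\ref{lem:anticommutative} are applicable. Next I would confirm, as already indicated in the text, that $\Join$ is genuinely the commutative version $\circledast$ of $\times$ for the choice function $g$ with $g(\{\pm\mathbf{i}\}) = \mathbf{i}$, $g(\{\pm\mathbf{j}\}) = \mathbf{j}$, $g(\{\pm\mathbf{k}\}) = \mathbf{k}$: on the standard basis one has $\mathbf{i} \circledast \mathbf{j} = g(f(\mathbf{i}\times\mathbf{j})) = g(\{\pm\mathbf{k}\}) = \mathbf{k}$ and $\mathbf{j}\circledast\mathbf{i} = g(f(\mathbf{j}\times\mathbf{i})) = g(\{\pm\mathbf{k}\}) = \mathbf{k}$, matching $\mathbf{i}\Join\mathbf{j} = \mathbf{j}\Join\mathbf{i} = \mathbf{k}$, and likewise $\mathbf{x}\circledast\mathbf{x} = g(f(\mathbf{0})) = \mathbf{0} = \mathbf{x}\Join\mathbf{x}$ for each basis vector.

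With $\Join$ identified as the commutative version of $\times$, I would invoke Lemma~\ref{lem:Join}, which establishes hypotheses \ref{lem:anticommutative:i}--\ref{lem:anticommutative:iv} of Lemma~\ref{lem:anticommutative} for $\Join$ with $U = \{u,v,w\}$ and $P = \{(x,y) : x,y \in U,\ x \neq y\}$. Lemma~\ref{lem:anticommutative} then yields the key separation property $s^{\Join} \neq \pm\, t^{\Join}$ for all $s,t \in F_n$ with $T_s^{\mathrm{u}} \neq T_t^{\mathrm{u}}$, which is exactly the hypothesis demanded by Theorem~\ref{thm:anticommutative} with $* = \times$ and $\circledast = \Join$.

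The final step is then to read off the three claims from Theorem~\ref{thm:anticommutative}. Part~(i) gives $\spac{\Join} = D_{n-1}$ and $\spa{\Join} = C_{n-1}$ for all $n \geq 1$, covering the first bullet and half of the third. Part~(ii) gives $\spac{\times} = 2\spac{\Join} = 2D_{n-1}$ for all $n \geq 2$ together with $\spa{\times} = C_{n-1}$ for all $n \geq 1$, covering the second bullet and the remainder of the third. (The restriction to $n \geq 2$ in the middle bullet is necessary, as $\spac[1]{\times} = 1 \neq 2 = 2D_0$.)

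Because all the substantive work has been relegated to Lemma~\ref{lem:Join}, Lemma~\ref{lem:anticommutative}, and Theorem~\ref{thm:anticommutative}, there is no real obstacle at the level of the corollary itself. The only place demanding any care is the identification of $\Join$ with the abstractly defined $\circledast$: one must check that the chosen sign conventions of $g$ are compatible with the antisymmetry of $\times$ on every ordered pair of basis vectors. This is a finite, routine verification, after which the corollary follows by quoting the three results in order.
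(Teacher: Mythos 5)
Your proposal is correct and follows exactly the same route as the paper: the paper's proof is the one-line citation of Theorem~\ref{thm:anticommutative}, Lemma~\ref{lem:anticommutative}, and Lemma~\ref{lem:Join}, with the identification of $\Join$ as the commutative version of $\times$ (via the choice function $g$ on $\{\pm\mathbf{i}\}, \{\pm\mathbf{j}\}, \{\pm\mathbf{k}\}$) already established in the text preceding the corollary. Your elaboration of that identification and of the logical chain is a faithful, slightly more detailed rendering of the same argument.
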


\begin{proof}
The result follows immediately from Theorem~\ref{thm:anticommutative}, Lemma~\ref{lem:anticommutative} and Lemma~\ref{lem:Join}.
\end{proof}

\subsection{Lie algebras}

A \emph{Lie algebra} is an algebra over a field $\FF$ satisfying the identities
\begin{gather*}
x x \approx 0, \\
x (y z) + y (z x) + z (x y) \approx 0.
\end{gather*}
The first of the above identities implies that the identity $x y \approx - y x$ is also satisfied, since
\[
0 \approx (x+y)(x+y) \approx x x + x y + y x + y y \approx x y + y x.
\]
Hence a Lie algebra over a field of characteristic distinct from $2$ must be anticommutative.
The second one is known as the \emph{Jacobi identity.}
The bilinear product of a Lie algebra is often denoted by $[-,-]$ and is called the \emph{Lie bracket.}
A Lie algebra is \emph{abelian} if its Lie bracket is constantly zero.
Such a Lie algebra is trivially commutative and associative.
In general, a Lie algebra is neither commutative nor associative.

\begin{definition}
A triple $(e,f,h)$ of nonzero elements of a Lie algebra is called an \emph{$\mathfrak{sl}_2$\hyp{}triple} if $[e,f] = h$, $[h,e] = 2e$, and $[h,f] = -2f$.
\end{definition}

It is well known that $\mathfrak{sl}_2$\hyp{}triples exist in every semisimple Lie algebra over a field of characteristic zero.
Consider a Lie algebra over a field of characteristic distinct from $2$ with an $\mathfrak{sl}_2$\hyp{}triple $(e,f,h)$.
One can check that $e,f,h$ must be linearly independent. 
Thus we can fix a basis containing $e, f, h$ and obtain a commutative version $[[- , -]]$ of the Lie bracket $[-,-]$ satisfying 
\begin{equation}\label{eq:triple}
[[e,f]]=[[f,e]]=h, \quad [[h,e]]=[[e,h]]=2e, \qand [[h,f]]=[[f,h]]=2f.
\end{equation}

\begin{lemma}\label{lem:Lie}
Let $\mathbf{L}$ be a Lie algebra over a field of characteristic distinct from $2$ with an $\mathfrak{sl}_2$\hyp{}triple $(e,f,h)$.
Then the conditions \ref{lem:anticommutative:i}--\ref{lem:anticommutative:iv} in Lemma~\ref{lem:anticommutative} hold for the Lie bracket $[-,-]$ and its commutative version $[[-,-]]$ with $U := \{e,f,h\}$ and $P :=  \{(e,e), (f,f), (e,h)\}$.
\end{lemma}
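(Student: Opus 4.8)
The plan is to observe that all four conditions \ref{lem:anticommutative:i}--\ref{lem:anticommutative:iv} involve only the commutative version $[[-,-]]$ together with $U = \{e,f,h\}$ and $P = \{(e,e),(f,f),(e,h)\}$; the anticommutativity of $[-,-]$ needed for Lemma~\ref{lem:anticommutative} to apply is guaranteed because the characteristic is not $2$. From \eqref{eq:triple} and the fact that $[[x,x]]=0$ for all $x$, the operation $[[-,-]]$ restricted to $U$ is completely described by $[[e,f]]=h$, $[[e,h]]=2e$, and $[[f,h]]=2f$. I would first record two structural features of $P$ that drive the whole argument: the second coordinates of the pairs in $P$ range over all of $U$, so by \ref{lem:anticommutative:i} every element of $U$ can be made a target value, while the first coordinates are exactly $e$ and $f$, consistent with $u\in\{e,f\}$ whenever $(u,w)\in P$.

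Conditions \ref{lem:anticommutative:i}--\ref{lem:anticommutative:iii} I would dispatch by exhibiting explicit witnesses. For \ref{lem:anticommutative:i}, the pairs $(e,e),(f,f),(e,h)$ cover the three possible targets $e,f,h$. For \ref{lem:anticommutative:ii}, take $(u,v)=(e,h)\in P$; then $u\circledast v = 2e$, and with $c=1/2$ (available since the characteristic is not $2$) we get $(c(u\circledast v),e)=(e,e)\in P$. For \ref{lem:anticommutative:iii}, take $(u,v)=(e,e)$ and $(u,w)=(e,h)$, which share the first coordinate $e$; then $v\circledast w=[[e,h]]=2e$, and again $c=1/2$ yields $(e,e)\in P$.

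The substance of the lemma is condition \ref{lem:anticommutative:iv}, which I would prove by induction on $n$, mirroring the argument for $\Join$ in Lemma~\ref{lem:Join}. In the base case $n=2$ one checks directly that for each $(u,w)\in P$ there is a $v\in U$ with $[[u,v]]$ a nonzero multiple of $w$: namely $v=h$ for $(e,e)$ and for $(f,f)$, and $v=f$ for $(e,h)$. For the inductive step write $t=(rs)$ and assume, using commutativity of $[[-,-]]$, that $x_j$ lies in $r$. If $r=x_j$, I set $h(x_j)=u$, choose the value $v$ above, and apply the inductive hypothesis to $s$ (which has at least two variables) with some pair of $P$ whose second coordinate is $v$, forcing $h(s)=cv$; then $h(t)=[[u,cv]]=c\,[[u,v]]$ is the required nonzero multiple of $w$. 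If $r$ has at least two variables, I pick intermediate targets $p,q\in U$ with $(u,p)\in P$ and $[[p,q]]$ a nonzero multiple of $w$, apply the inductive hypothesis to $r$ (target $p$, position $j$) and to $s$ (target $q$, or simply set its single variable to $q$), and combine by bilinearity.

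The main obstacle is verifying the inductive step of \ref{lem:anticommutative:iv} for each admissible pair $(u,w)$, i.e.\ checking that suitable intermediate targets $p,q$ always exist subject to the constraint $(u,p)\in P$. This is exactly where the restricted shape of $P$ matters: for $(e,e)$ take $p=e,\,q=h$; for $(f,f)$ take $p=f,\,q=h$; for $(e,h)$ take $p=e,\,q=f$. In every case $(u,p)\in P$ and $[[p,q]]$ is a nonzero scalar multiple of $w$, and since each target value lies among the second coordinates of $P$, the inductive hypothesis can realize it; bilinearity then collapses the scalar factors accumulated along the way into a single nonzero constant $c$, completing the argument.
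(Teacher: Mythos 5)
Your proposal is correct and takes essentially the same route as the paper's own proof: identical explicit witnesses for conditions (i)--(iii) (in particular $(e,h)$ with $c=\tfrac12$ giving $(e,e)\in P$ for both (ii) and (iii)), and the same induction on $n$ for condition (iv), splitting on $(u,w)\in\{(e,e),(f,f),(e,h)\}$ with the same intermediate targets (a multiple of $e$ or $f$ for the subterm containing $x_j$, and a multiple of $h$ or $f$ for the other subterm), combined by bilinearity. The only cosmetic difference is that you treat the case where one subterm is a single variable separately, while the paper absorbs it into the inductive step as the trivial case.
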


\begin{proof}
Let $\mathbf{[L]} = (L, [[-,-]])$, where $L$ is the universe of $\mathbf{L}$.
It is clear that \ref{lem:anticommutative:i} holds.
For \ref{lem:anticommutative:ii}, we have $(e,h)\in P$ and $(\frac12 [[e,h]], e) = (e,e)\in P$.
For \ref{lem:anticommutative:iii}, we have $(e,e), (e,h) \in P$ and $(\frac12 [[e,h]], e) = (e,e)\in P$.
It remains to show \ref{lem:anticommutative:iv}, i.e., for any $n \geq 2$, $t \in F_n$, $j \in \nset{n}$, and $(u,w)\in P$, there are $u_1, \dots, u_n \in U$ such that $u_j = u$ and $t^{\mathbf{[L]}}(u_1, \ldots, u_n) = cw$ for some scalar $c\ne0$. 
We proceed by induction on $n$.

For $n = 2$, this holds by Equation~\eqref{eq:triple}.
Assume now that $n \geq 3$ and that \ref{lem:anticommutative:iv} holds for bracketings of fewer than $n$ variables.
Then $t = (t_1 t_2)$ for subterms $t_1$ and $t_2$.
Assume $x_j \in \var(t_1)$, without loss of generality.
We distinguish some cases below.

First suppose $(u,w) = (e,e)$.
By applying the inductive hypothesis to $t_1$ and $t_2$ (trivial if $\varcount{t_1} = 1$ or $\varcount{t_2} = 1$), we get elements $u_1, \dots, u_n \in \{e,f,h\}$ such that $u_j = e$, $t_1^\mathbf{[L]}(u_1, \dots, u_\ell) = c_1 e$, and $t_2^\mathbf{[L]}(u_{\ell + 1}, \dots, u_n) = c_2 h$ for some nonzero scalars $c_1$, $c_2$.
Then $t^\mathbf{[L]}(u_1, \dots, u_n) = [[c_1 e, c_2 h]] = 2 c_1 c_2 e$.
A similar argument is valid for the case $(u,w) = (f,f)$.

Now suppose $(u,w) = (e,h)$.
By the inductive hypothesis (trivial if $\varcount{t_1} = 1$ or $\varcount{t_2} = 1$), there exist $u_1, \dots, u_n \in \{e,f,h\}$ such that $u_j = e$, $t_1^\mathbf{[L]}(u_0, \dots, u_\ell) = c_1 e$, and $t_2^\mathbf{[L]}(u_{\ell + 1}, \dots, u_n) = c_2 f$ for some nonzero scalars $c_1$, $c_2$.
Then $t^\mathbf{[L]}(u_1, \dots, u_n) = [c_1 e, c_2 f] = c_1 c_2 h$.
\end{proof}

\begin{corollary}\label{cor:Lie}
Let $\mathbf{L}$ be a Lie algebra over a field of characteristic distinct from $2$ with an $\mathfrak{sl}_2$-triple.
For the Lie bracket $[-,-]$ of $\mathbf{L}$, it holds that $\spac{[-,-]} = 2D_{n-1}$ for all $n \geq 2$ and $\spa{[-,-]} = C_{n-1}$ for all $n \geq 1$.
\end{corollary}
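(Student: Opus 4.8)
The plan is to obtain this corollary as an immediate application of Theorem~\ref{thm:anticommutative}, in exactly the same way that Corollary~\ref{cor:cross} was deduced for the cross product; all of the genuine content has been front-loaded into the preceding lemmas, so the remaining task is simply to assemble them. First I would record that the Lie bracket $[-,-]$ is anticommutative: as shown just before the definition of an $\mathfrak{sl}_2$-triple, the identity $xx \approx 0$ together with bilinearity forces $xy \approx -yx$ once the characteristic of $\FF$ is not $2$. Hence $\mathbf{L}$ is an anticommutative algebra over $\FF$, and, since the elements $e, f, h$ of the $\mathfrak{sl}_2$-triple are linearly independent, we may extend $\{e,f,h\}$ to a basis and form a commutative version $[[-,-]]$ obeying the relations~\eqref{eq:triple}.

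Next I would verify the sole hypothesis of Theorem~\ref{thm:anticommutative}, namely that any $s, t \in F_n$ with $T_s^\mathrm{u} \ne T_t^\mathrm{u}$ induce term operations under $[[-,-]]$ that differ even up to a global sign, i.e.\ $s^{[[-,-]]} \ne \pm\, t^{[[-,-]]}$. This is precisely the conclusion of Lemma~\ref{lem:anticommutative}, whose hypotheses \ref{lem:anticommutative:i}--\ref{lem:anticommutative:iv} were checked for $[-,-]$ and $[[-,-]]$ in Lemma~\ref{lem:Lie}, with $U = \{e,f,h\}$ and $P = \{(e,e),(f,f),(e,h)\}$.

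With the hypothesis secured, I would invoke Theorem~\ref{thm:anticommutative} with $* = [-,-]$ and $\circledast = [[-,-]]$. Its part~(ii) then yields $\spac{[-,-]} = 2\spac{[[-,-]]} = 2 D_{n-1}$ for all $n \ge 2$, and $\spa{[-,-]} = C_{n-1}$ for all $n \ge 1$, which is exactly the assertion of the corollary. The main difficulty is not in the corollary at all but in its ingredients: the technical induction establishing condition~\ref{lem:anticommutative:iv} of Lemma~\ref{lem:Lie}, and the sign-flip doubling argument inside Theorem~\ref{thm:anticommutative}. The only point that still requires thought here is identifying the correct relation set $P = \{(e,e),(f,f),(e,h)\}$ adapted to the $\mathfrak{sl}_2$ relations, and that choice is precisely what Lemma~\ref{lem:Lie} is designed to validate; granting the earlier results, the corollary follows with no further work.
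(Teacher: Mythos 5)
Your proposal is correct and follows exactly the paper's own route: the paper's proof of Corollary~\ref{cor:Lie} is precisely the one-line assembly of Theorem~\ref{thm:anticommutative}, Lemma~\ref{lem:anticommutative}, and Lemma~\ref{lem:Lie} that you describe, with the anticommutativity of the bracket and the linear independence of $e,f,h$ noted beforehand just as you note them. Your appeal to part~(ii) of Theorem~\ref{thm:anticommutative} for both $\spac{[-,-]} = 2D_{n-1}$ ($n \geq 2$) and $\spa{[-,-]} = C_{n-1}$ ($n \geq 1$) matches the paper exactly.
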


\begin{proof}
The result follows immediately from Theorem~\ref{thm:anticommutative}, Lemma~\ref{lem:anticommutative} and Lemma~\ref{lem:Lie}.
\end{proof}

\section{Totally nonassociative operations}\label{sec:TN}

In this section we focus on the ac\hyp{}spectra of some totally nonassociative operations that are not commutative or anticommutative.
Recall that a binary operation $*$ is said to be \emph{totally nonassociative} if $\spa{*} = C_{n-1}$ for all $n \geq 1$.
The arithmetic, geometric, and harmonic means, the cross product on $\RR^3$, and the Lie brackets of Lie algebras over fields of characteristic distinct from $2$ with an $\mathfrak{sl}_2$-triple are all totally nonassociative and their ac-spectra have been determined in earlier sections.
There are many other examples of totally nonassociative operations~\cite{AssociativeSpectra1,Hamming}.
We will study the exponentiation, the implication, and the negated disjunction (NOR) in this section.

\subsection{Exponentiation}
Recall that any binary operation $*$ satisfies $\spac{*} \leq n! C_{n-1}$ for all $n \geq 1$, and the equality is achieved by the free groupoid on one generator (Proposition \ref{prop:free-groupoid}).
We show that if $*$ is the exponentiation then $\spac{*}$ is strictly less than this upper bound.

We need the following lemma, which applies to the exponentiation.

\begin{lemma}[{Cs\'ak\'any, Waldhauser \cite[statements 4.2.1 and 4.2.2]{AssociativeSpectra1}}]
\label{lem:surjective}
Let $\mathbf{G} = (G, \ast)$ be a groupoid, and assume that the operation $\ast:G\times G\to G$ is surjective.
Let $t \in T(X_\omega)$ be a linear term.
Then the following statements hold.
\begin{enumerate}[label={\upshape(\roman*)}]
\item The term operation $t^\mathbf{G}$ is surjective.
\item If $\ast$ is surjective and its Cayley table has neither two identical columns nor two identical rows, then $t^\mathbf{G}$ depends on every variable in $\var(t)$, 
i.e., for every $x\in\var(t)$, there exist assignments $h,h':\var(t)\to G$ such that $h(z)=h'(z)$ for all $z\in \var(t)\setminus\{x\}$ and $h(t)\ne h'(t)$.
\end{enumerate}
\end{lemma}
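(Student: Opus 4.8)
The plan is to prove both statements by induction on the structure of the linear term $t$ (equivalently, on $\varcount{t}$), exploiting the fact that linearity forces the variable sets of the two immediate subterms to be disjoint, so that assignments to them may be chosen independently. In the inductive step I will always write $t = (t_1 t_2)$ with $\var(t_1) \cap \var(t_2) = \emptyset$, which is precisely what makes the local arguments on $t_1$ and $t_2$ combine into a single global assignment.

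For statement~(i), the base case is $t = x_i$, where $t^\mathbf{G}$ is a projection and hence trivially surjective. For the inductive step, given a target $g \in G$, surjectivity of $*$ yields $a, b \in G$ with $a * b = g$; by the induction hypothesis there are assignments on $\var(t_1)$ and $\var(t_2)$ realizing $t_1^\mathbf{G} = a$ and $t_2^\mathbf{G} = b$, and since the two variable sets are disjoint these combine into one assignment on $\var(t)$ under which $t$ evaluates to $a * b = g$. Hence $t^\mathbf{G}$ is surjective.

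For statement~(ii), I would again induct on the structure of $t$, the base case $t = x_i$ being immediate since a projection depends on its unique variable. For $t = (t_1 t_2)$ and a fixed $x \in \var(t)$, the argument splits into two symmetric cases according to whether $x$ lies in $t_1$ or in $t_2$, and here the two hypotheses on the Cayley table enter in complementary roles. Suppose $x \in \var(t_1)$. By the induction hypothesis $t_1^\mathbf{G}$ depends on $x$, so there are assignments $h_1, h_1'$ on $\var(t_1)$ agreeing off $x$ with $a := h_1(t_1) \neq h_1'(t_1) =: a'$. Because the Cayley table has no two identical rows, there is some $b \in G$ with $a * b \neq a' * b$; and by statement~(i), $t_2^\mathbf{G}$ is surjective, so some assignment $h_2$ on $\var(t_2)$ achieves $t_2^\mathbf{G} = b$. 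Using disjointness of the variable sets, the combined assignments $h_1 \cup h_2$ and $h_1' \cup h_2$ agree off $x$ yet send $t$ to $a * b$ and $a' * b$ respectively, which differ. The case $x \in \var(t_2)$ is entirely analogous, except that it invokes the absence of two identical columns (to separate two distinct right-hand values $b \neq b'$ produced by the induction hypothesis on $t_2$) together with the surjectivity of $t_1^\mathbf{G}$ from statement~(i) to fix the required left-hand value.

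The routine part is the bookkeeping of assignments; the one genuinely load-bearing observation is the matching of hypotheses to cases --- \emph{no two identical rows} handles a variable in the left subterm (where the left subterm's value varies while the right is held fixed), and \emph{no two identical columns} handles a variable in the right subterm. I expect the main subtlety to be ensuring at each step that the subterm \emph{not} containing $x$ can be driven to the precise value ($b$ or $a$) needed to exploit the relevant non-degeneracy of the table, which is exactly what statement~(i) supplies; linearity is what legitimizes making these two choices independently.
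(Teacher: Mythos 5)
Your proof is correct. The paper itself gives no proof of this lemma---it is quoted from Cs\'ak\'any and Waldhauser---so there is no internal argument to compare against; your structural induction is the standard (and essentially the cited source's) argument. The two load-bearing points are exactly the ones you isolate: linearity gives $\var(t_1) \cap \var(t_2) = \emptyset$, so partial assignments on the subterms combine into a single assignment on $\var(t)$; and statement~(i) is invoked inside the induction for~(ii) to steer the subterm \emph{not} containing $x$ to the witness value ($b$ or $a$) demanded by the nondegeneracy hypothesis, with \emph{no identical rows} correctly matched to $x \in \var(t_1)$ (left value varies, right value fixed) and \emph{no identical columns} to $x \in \var(t_2)$.
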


\begin{definition}
Let $t$ be a term.
If $t$ is not a variable, then it is of the form $t = (t_\mathrm{L} t_\mathrm{R})$ for some subterms $t_\mathrm{L}$ and $t_\mathrm{R}$.
If the left subterm $t_\mathrm{L}$ is not a variable, we can further write it in the form $t_\mathrm{L} = (t'_\mathrm{L} t'_\mathrm{R})$.
Continuing in this way, always further decomposing left subterms until we reach one that is a variable, we can write the term $t$ in the form
$t = ( ( \cdots ( ( x_i t_1 ) t_2 ) \cdots ) t_k )$,
which we will refer to as the \emph{leftmost decomposition} of $t$, and the subterms $t_1, \dots, t_k$ are called the \emph{factors} of the decomposition.
The variable $x_i$ is called the \emph{leftmost variable} of $t$.
Denote the leftmost variable of $t$ by $L(t)$.
\end{definition}

Note that the number of factors in a leftmost decomposition of $t$ equals the number of opening parentheses preceding the leftmost variable in $t$.
We can think of this in terms of the binary tree $T_t$.
The factors of the leftmost decomposition correspond to the subtrees $T_{t_1}, \dots, T_{t_k}$ rooted at the right children of the internal vertices along the unique path from the leftmost leaf to the root of $T_t$.

\begin{definition}
Based on leftmost decomposition, we associate with each term $t \in T(X)$ an ordered labeled tree $P_t$ that is defined by the following recursion.
If $t$ is a variable $x_i \in X$, then $P_t$ is the one\hyp{}vertex tree with the single vertex labeled by $x_i$.
If $t$ is not a variable and $t = ( ( \cdots ( ( x_i t_1 ) t_2 ) \cdots ) t_k )$ is the leftmost decomposition of $t$, then $P_t$ is the ordered tree whose root is labeled by $x_i$ and has $k$ children at which the subtrees $P_{t_1}, P_{t_2}, \dots, P_{t_k}$ are rooted.
\end{definition}

\begin{proposition}\label{prop:exp}
Let $\mathbf{G} = (G,*)$ be a groupoid satisfying the identity $(xy)z \approx (xz)y$.
If $s, t \in T(X_\omega)$ are linear terms such that the corresponding ordered labeled trees $P_s$ and $P_t$ have equal underlying unordered trees, i.e., $P_s^{\mathrm{u}} =  P_t^{\mathrm{u}}$, then $s^{\mathbf{G}} = t^{\mathbf{G}}$.
Consequently, $\spac{\mathbf{G}} \le n^{n-1}$.
Moreover, if the equality holds, then $\spa{\mathbf{G}}=C_{n-1}$.
\end{proposition}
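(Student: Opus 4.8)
The plan is to establish the three assertions in turn, with the first (that $P_s^{\mathrm{u}} = P_t^{\mathrm{u}}$ suffices for $s^{\mathbf{G}} = t^{\mathbf{G}}$) carrying the real content and the other two following combinatorially. For the first assertion I would induct on the number of variables, the key mechanism being that the identity $(xy)z \approx (xz)y$ transposes adjacent factors of a leftmost decomposition. Concretely, if $s = ((\cdots((x_i s_1) s_2) \cdots) s_k)$ is a leftmost decomposition, then for each $j$ the subterm $((A\, s_j)\, s_{j+1})$, with $A = ((\cdots(x_i s_1)\cdots) s_{j-1})$, may be rewritten as $((A\, s_{j+1})\, s_j)$ by an instance of the defining identity; since the equational theory is a congruence, performing this rewriting inside a context preserves the induced term operation. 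As adjacent transpositions generate $\SS_k$, the factors $s_1, \dots, s_k$ may be permuted arbitrarily without changing $s^{\mathbf{G}}$. Now, given linear terms $s, t$ with $P_s^{\mathrm{u}} = P_t^{\mathrm{u}}$, the common root label forces the same leftmost variable $x_i$, and the label-preserving isomorphism of the two unordered trees induces a bijection $\pi$ between the factors with $P_{s_j}^{\mathrm{u}} = P_{t_{\pi(j)}}^{\mathrm{u}}$, equivalently $\var(s_j) = \var(t_{\pi(j)})$. Reordering the factors of $s$ by $\pi$ and then replacing each factor $s_{\pi^{-1}(\ell)}$ by $t_\ell$ --- legitimate because $s_{\pi^{-1}(\ell)}^{\mathbf{G}} = t_\ell^{\mathbf{G}}$ by the induction hypothesis and because term operations compose --- turns $s$ into $t$ at the level of induced operations, giving $s^{\mathbf{G}} = t^{\mathbf{G}}$.

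For the upper bound I would observe that for $t \in F_n$ the tree $P_t$ has exactly one vertex per variable, so $P_t^{\mathrm{u}}$ is an unordered rooted tree on the $n$ labeled vertices $x_1, \dots, x_n$. By the first part, the number of distinct term operations induced by $F_n$ is at most the number of such trees, which is $n^{n-1}$ (the number of rooted labeled trees on $n$ vertices, i.e.\ $n$ times Cayley's $n^{n-2}$). Hence $\spac{\mathbf{G}} \le n^{n-1}$.

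For the last assertion, fix $n$ with $\spac{\mathbf{G}} = n^{n-1}$. Equality forces the map $P_t^{\mathrm{u}} \mapsto t^{\mathbf{G}}$ to be a bijection, so $s^{\mathbf{G}} = t^{\mathbf{G}}$ holds if and only if $P_s^{\mathrm{u}} = P_t^{\mathrm{u}}$. It therefore suffices to show that the restriction of $t \mapsto P_t^{\mathrm{u}}$ to the bracketings $B_n$ is injective, for then the $C_{n-1}$ bracketings induce pairwise distinct operations and $\spa{\mathbf{G}} = C_{n-1}$. I would prove this injectivity by recovering a bracketing $b$ from $P_b^{\mathrm{u}}$: in a bracketing the leaves read $1, \dots, n$ from left to right, so the leftmost variable (the root of $P_b$) is $x_1$, and the factors of the leftmost decomposition use consecutive increasing blocks of labels. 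Consequently the left-to-right order of the root's children can be recovered from $P_b^{\mathrm{u}}$ by sorting the child subtrees according to their smallest label, and since each factor is itself a bracketing of its block, the reconstruction proceeds recursively. Thus $P_b^{\mathrm{u}}$ determines $b$.

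The main obstacle I anticipate is making the induction in the first part fully rigorous: one must check that the factor-permutation step and the factor-replacement step are both justified at the level of induced operations (the former by stability of identities under substitution into contexts, the latter by compositionality of term operations), and that the matching $\pi$ is exactly the one dictated by the variable sets $\var(s_j)$. The reconstruction argument in the last part is simpler but depends on the order-compatibility special to bracketings, which is precisely what keeps $b \mapsto P_b^{\mathrm{u}}$ injective even though $t \mapsto P_t^{\mathrm{u}}$ collapses many full linear terms.
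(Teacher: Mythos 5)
Your proposal is correct and follows essentially the same route as the paper's proof: the same key observation that $(xy)z \approx (xz)y$ lets one permute the factors of a leftmost decomposition, the same induction matching root labels and factors via a bijection $\pi$, the same count of unordered rooted labeled trees by $n^{n-1}$, and the same reconstruction of a bracketing from $P_b^{\mathrm{u}}$ (your smallest-label sorting is just the paper's pre-order argument in different words). The only cosmetic differences are that you spell out the adjacent-transposition justification that the paper leaves implicit, and you phrase the last step as explicit recursive reconstruction rather than uniqueness of pre-order labelings.
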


\begin{proof}
Since $\mathbf{G}$ satisfies the identity $(xy)z \approx (xz)y$, it also satisfies the identity
\begin{equation}
( ( \cdots ( ( x_0 x_1 ) x_2 ) \cdots ) x_n )
\approx
( ( \cdots ( ( x_0 x_{\sigma(1)} ) x_{\sigma(2)} ) \cdots ) x_{\sigma(n)} )
\label{eq:exp-id}
\end{equation}
for every $n \in \IN_{+}$ and $\sigma \in \SS_n$.
In other words, permuting the factors of the leftmost decomposition of a term does not alter the induced term operation on $\mathbf{G}$.

Suppose $s, t \in T(X_\omega)$ with $P_s^{\mathrm{u}} = P_t^{\mathrm{u}}$.
We prove $s^{\mathbf{G}} = t^{\mathbf{G}}$ by induction on $n := \varcount{s} = \varcount{t}$.
This clearly holds for $1 \leq n \leq 2$.
Assume now that $n \geq 3$ and that the claim holds for terms with fewer than $n$ variables.
Let $s = ( ( \cdots ( ( x_i s_1 ) s_2 ) \cdots ) s_d )$ and $t = ( ( \cdots ( ( x_j t_1 ) t_2 ) \cdots ) t_e )$
be the leftmost decompositions of $s$ and $t$.

Since $P_s^{\mathrm{u}} = P_t^{\mathrm{u}}$, the roots of $P_s$ and $P_t$ are labeled with the same variable and have the same number of children, that is, $x_i = x_j$ and $d = e$.
The subtrees rooted at the children of the root of $P_s$ are $P_{s_1}, \dots, P_{s_d}$, and, similarly
the subtrees rooted at the children of the root of $P_t$ are $P_{t_1}, \dots, P_{t_d}$.
Since $P_s^{\mathrm{u}} = P_t^{\mathrm{u}}$, there is a permutation $\pi \in \SS_d$ such that $P_{s_i}^{\mathrm{u}} = P_{t_{\pi(i)}}^{\mathrm{u}}$ for all $i \in \nset{d}$.
By the induction hypothesis, we have $s_i^\mathbf{G} = t_{\pi(i)}^\mathbf{G}$ for all $i \in \nset{d}$.
Consequently,
\[
\begin{split}
s^\mathbf{G}(\mathbf{a})
&
= ( ( \cdots ( ( x_i^\mathbf{G}(\mathbf{a}) * s_1^\mathbf{G}(\mathbf{a}) ) * s_2^\mathbf{G}(\mathbf{a}) ) * \cdots ) * s_d^\mathbf{G}(\mathbf{a}) )
\\ &
= ( ( \cdots ( ( x_i^\mathbf{G}(\mathbf{a}) * t_{\pi(1)}^\mathbf{G}(\mathbf{a}) ) * t_{\pi(2)}^\mathbf{G}(\mathbf{a}) ) * \cdots ) * t_{\pi(d)}^\mathbf{G}(\mathbf{a}) )
\\ &
= ( ( \cdots ( ( x_i^\mathbf{G}(\mathbf{a}) * t_1^\mathbf{G}(\mathbf{a}) ) * t_2^\mathbf{G}(\mathbf{a}) ) * \cdots ) * t_d^\mathbf{G}(\mathbf{a}) )
= t^\mathbf{G}(\mathbf{a})
\end{split}
\]
for all $\mathbf{a}$ in the domain of $s^\mathbf{G}$, so $s^\mathbf{G} = t^\mathbf{G}$.
It follows that $\spac{\mathbf{G}}$ is bounded above by the number of unordered rooted trees with $n$ labeled vertices, which is $n^{n-1}$ (see, e.g., Tak\'acs~\cite[\S3]{Takacs}).

Now suppose $\spac{\mathbf{G}} = n^{n-1}$, i.e., $P_s^{\mathrm{u}} = P_t^{\mathrm{u}}$ if and only if $s^{\mathbf{G}} = t^{\mathbf{G}}$ for all $s, t \in T(X_\omega)$.
If two distinct binary trees have leaves labeled $1,\ldots,n$ from left to right, then they correspond to distinct ordered trees with nodes labeled $1, \ldots, n$ in the pre-order (first visit the root and then recursively visit the subtrees rooted at the children of the root from left to right), so the underlying unordered labeled trees are distinct and they induce distinct term operations on $\mathbf{G}$.
This shows that $\spa{\mathbf{G}} = C_{n-1}$.
\end{proof}

\begin{proposition}
\label{prop:exponentiation}
For $\mathbf{G} = (\RR_{\geq 0}, {*})$, where $*$ is the exponentiation operation defined by $a * b := a^b$ for all $a, b \in \RR_{\geq 0}$, we have $\spac{\mathbf{G}} = n^{n-1}$ and $\spa{\mathbf{G}} = C_{n-1}$.
\end{proposition}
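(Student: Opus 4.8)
The plan is to apply Proposition~\ref{prop:exp}. First I would check that exponentiation satisfies the identity $(xy)z \approx (xz)y$: for $a>0$ both sides equal $a^{bc}$, and the boundary cases with $a=0$ are verified separately using the conventions $0^0=1$ and $0^c=0$ for $c>0$ (note that the exponent never becomes negative, since it is always an element of $\RR_{\geq0}$). Hence Proposition~\ref{prop:exp} gives $\spac{\mathbf{G}}\le n^{n-1}$, and it remains to show that this bound is attained, because then $\spa{\mathbf{G}}=C_{n-1}$ follows from the same proposition. Since the map $t\mapsto P_t^{\mathrm{u}}$ surjects onto the $n^{n-1}$ unordered rooted trees with $n$ labeled vertices and, by Proposition~\ref{prop:exp}, $P_s^{\mathrm{u}}=P_t^{\mathrm{u}}$ already forces $s^{\mathbf{G}}=t^{\mathbf{G}}$, attaining the bound is equivalent to the converse implication: $P_s^{\mathrm{u}}\ne P_t^{\mathrm{u}}$ implies $s^{\mathbf{G}}\ne t^{\mathbf{G}}$. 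In other words, I must show that the operation $t^{\mathbf{G}}$ determines the labeled tree $P_t^{\mathrm{u}}$.

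To do this I would linearize the operation. For a linear term $t$ set $\Lambda_t(\mathbf u):=\log\bigl(t^{\mathbf{G}}(e^{u_1},\dots,e^{u_n})\bigr)$, an analytic function on $\RR^{\var(t)}$ that is determined by $t^{\mathbf{G}}$. Writing the leftmost decomposition $t = ( ( \cdots ( ( x_r t_1 ) t_2 ) \cdots ) t_k )$ and using $\log(a^b)=b\log a$, one obtains the functional equation $\Lambda_t=u_r\exp\bigl(\sum_{j}\Lambda_{t_j}\bigr)$, where $r=L(t)$ and the variable sets of the factors $t_j$ are the vertex sets of the subtrees hanging at the children of the root of $P_t$. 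The key claim is that for every vertex subset $W$, the coefficient of the multilinear monomial $\prod_{i\in W}u_i$ in the power series $\Lambda_t$ equals $1$ if $W$ is a nonempty \emph{ancestor\hyp{}closed} set of $P_t$ (an order ideal of the ancestor order, so in particular $W$ contains the root), and $0$ otherwise.

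The claim is proved by induction on $\varcount{t}$ via the functional equation. Since $\Lambda_{t_j}=u_{L(t_j)}\exp(\cdots)$, every monomial of $\Lambda_{t_j}$ is divisible by $u_{L(t_j)}$, so $\Lambda_{t_j}^m$ contributes no multilinear monomial for $m\ge2$; hence the multilinear part of $\exp\bigl(\sum_j\Lambda_{t_j}\bigr)=\prod_j\exp(\Lambda_{t_j})$ is $\prod_j(1+M_j)$, where $M_j$ is the multilinear part of $\Lambda_{t_j}$ and the $M_j$ have pairwise disjoint supports. Multiplying by $u_r$ and expanding, a multilinear monomial of $\Lambda_t$ consists of $\{r\}$ together with a choice, for each child, of an ancestor\hyp{}closed subset of that child's subtree (possibly empty)—which is precisely an ancestor\hyp{}closed set of $P_t$—and by the inductive hypothesis its coefficient is $\prod 1=1$. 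Finally, the ancestor\hyp{}closed sets are exactly the order ideals of the ancestor order of $P_t$, and a finite poset is recovered from its lattice of order ideals; since the labels sit on the vertices, this reconstructs the labeled tree $P_t^{\mathrm{u}}$ from $t^{\mathbf{G}}$. This yields $\spac{\mathbf{G}}=n^{n-1}$, whence $\spa{\mathbf{G}}=C_{n-1}$ by Proposition~\ref{prop:exp}. I expect the main obstacle to be the bookkeeping in the coefficient computation—in particular justifying that only the first\hyp{}power terms $\Lambda_{t_j}^{1}$ survive into multilinear monomials and that the resulting index sets are exactly the order ideals—together with the clean statement that the order ideals determine the rooted tree.
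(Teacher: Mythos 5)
Your proof is correct, and it takes a genuinely different route from the paper's. Both arguments begin identically: verify $(xy)z \approx (xz)y$ (the paper just cites the power rule $(a^b)^c = a^{bc} = (a^c)^b$; your care with the $0^0=1$ boundary cases is extra but harmless), invoke Proposition~\ref{prop:exp} for the upper bound $n^{n-1}$ and for the implication $P_s^{\mathrm{u}} = P_t^{\mathrm{u}} \Rightarrow s^{\mathbf{G}} = t^{\mathbf{G}}$, so that everything reduces to the converse implication. The paper proves that converse by induction on $\varcount{t}$ using arithmetic: assign distinct primes $x_i \mapsto p_i$, so the leftmost decomposition gives $h(s) = p_i^{h(s_1) \cdots h(s_d)}$ and $h(t) = p_j^{h(t_1) \cdots h(t_e)}$; the fundamental theorem of arithmetic forces $i = j$, $d = e$, and a matching $h(s_\ell) = h(t_{\pi(\ell)})$ of the factors (which are powers of distinct primes by linearity), and then assignments fixing $x_i \mapsto 2$ and outside variables to $1$ let the induction hypothesis apply to the factors. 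You instead build a complete invariant in one shot: $\Lambda_t(\mathbf{u}) = \log t^{\mathbf{G}}(e^{u_1}, \dots, e^{u_n})$ satisfies $\Lambda_t = u_r \exp\bigl(\sum_j \Lambda_{t_j}\bigr)$, its multilinear Taylor support is exactly the family of nonempty ancestor\hyp{}closed vertex sets of $P_t$ (your bookkeeping is right: every monomial of $\Lambda_{t_j}$ is divisible by $u_{L(t_j)}$, so higher powers never contribute multilinearly, and disjointness of the factors' variable sets makes the multilinear parts multiply), and Birkhoff recovery of a poset from its order ideals reconstructs $P_t^{\mathrm{u}}$, with labels, from $t^{\mathbf{G}}$. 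What each buys: the paper's argument is more elementary, needing only integer evaluations and unique factorization; yours is more structural, exhibits a closed\hyp{}form invariant rather than a term\hyp{}by\hyp{}term distinguishing procedure, shows that the restriction of $t^{\mathbf{G}}$ to any neighborhood of $(1,\dots,1)$ already determines $P_t^{\mathrm{u}}$, and would transfer to any operation obeying the same functional equation after a change of variables. Both proofs equally rely on the (easy, and correctly noted) surjectivity of $t \mapsto P_t^{\mathrm{u}}$ onto the $n^{n-1}$ unordered rooted labeled trees.
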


\begin{proof} 
Proposition~\ref{prop:exp} applies to the groupoid $\mathbf{G} = (\RR_{\geq 0}, {*})$ since it satisfies $(xy)z \approx (xz)y$ by the power rule of exponents: $(a^b)^c = a^{bc} = (a^c)^b$.
Thus it suffices to show that $s^\mathbf{G} = t^\mathbf{G}$ implies $P_s^{\mathrm{u}} = P_t^{\mathrm{u}}$ for any linear terms $s, t \in T(X_\omega)$.

Assume $s^\mathbf{G} = t^\mathbf{G}$, which implies $\var(s) = \var(t)$ by Lemma~\ref{lem:surjective}.
We show $P_s^{\mathrm{u}} = P_t^{\mathrm{u}}$ by induction on $n := \varcount{s} = \varcount{t}$.
This is trivial for $1 \leq n \leq 2$ and we can assume it holds for linear terms with fewer than $n$ variables, where $n\ge3$.
Let $s = ( ( \cdots ( ( x_i s_1 ) s_2 ) \cdots ) s_d )$ and $t = ( ( \cdots ( ( x_j t_1 ) t_2 ) \cdots ) t_e )$
be the leftmost decompositions of $s$ and $t$.

Let $h \colon X_\omega \to \RR_{\geq 0}$ be an assignment of distinct prime numbers to the variables, e.g., $x_i \mapsto p_i$, where $p_i$ is the $i$\hyp{}th prime, for each $i \in \IN_{+}$.
We have
\begin{align*}
h(s)
&
= ( ( \cdots ( ( h(x_i) * h(s_1) ) * h(s_2) ) * \cdots ) * h(s_d) )
= p_i^{h(s_1) h(s_2) \cdots h(s_d)},
\\
h(t)
&
= ( ( \cdots ( ( h(x_j) * h(t_1) ) * h(t_2) ) * \cdots ) * h(t_e) )
= p_j^{h(t_1) h(t_2) \cdots h(t_e)}. 
\end{align*}
Since $s^\mathbf{G} = t^\mathbf{G}$, we must have $h(s) = h(t)$.
It follows from the fundamental theorem of arithmetic that $p_i = p_j$ and $h(s_1) h(s_2) \cdots h(s_d) = h(t_1) h(t_2) \cdots h(t_e)$.
Therefore $x_i = x_j$.
Moreover, since $s$ is a linear term, the sets $\var(s_1), \var(s_2), \dots, \var(s_d)$ are pairwise disjoint and so $h(s_1), h(s_2), \dots, h(s_d)$ are powers of distinct primes; similarly $\var(t_1), \var(t_2), \dots, \var(t_e)$ are pairwise disjoint and so $h(t_1), h(t_2), \dots, h(t_e)$ are powers of distinct primes.
It follows again from the fundamental theorem of arithmetic that $d = e$ and there is a unique permutation $\pi \in \SS_d$ such that $h(s_\ell) = h(t_{\pi(\ell)})$ for all $\ell \in \nset{d}$.
Now, by considering, for each $\ell \in \nset{d}$, all assignments that fix $x_i \mapsto 2$, $x_p \mapsto 1$ for all $x_p \in X_\omega \setminus (\var(s_\ell) \cup \{x_i\})$ and let the values for the variables in $\var(s_\ell)$ vary, we see that $s_\ell^\mathbf{G} = t_{\pi(\ell)}^\mathbf{G}$ for all $\ell \in \nset{d}$.
By the induction hypothesis, we have $P_{s_\ell}^{\mathrm{u}} = P_{s_{\pi(\ell)}}^{\mathrm{u}}$, and it follows that $P_s^{\mathrm{u}} = P_t^{\mathrm{u}}$.
\end{proof}

\subsection{Implication}
Now we study the implication $\rightarrow$ defined on the set $\{0,1\}$ by $x \rightarrow y =0$ if $x=1$ and $y=0$ or $x \rightarrow y =0$ otherwise.
It turns out to be more convenient to use the converse implication $\leftarrow$, which is defined on $\{0,1\}$ by $x \leftarrow y := y \rightarrow x$.
As the groupoids $(\{0,1\},{\rightarrow})$ and $(\{0,1\},{\leftarrow})$ are antiisomorphic, their ac-spectra coincide;
therefore it suffices to consider only $\leftarrow$.

Note that the implication $\rightarrow$ and the converse implication $\leftarrow$ are surjective operations on $\{0,1\}$, and their Cayley tables have no two identical columns nor two identical rows.
Hence Lemma~\ref{lem:surjective} applies to $\rightarrow$ and $\leftarrow$.

\begin{proposition}\label{prop:implication}
For $\mathbf{G} = (\{0,1\}, {*})$, where $*$ is the converse implication $\leftarrow$, we have $\spac{\mathbf{G}} = n^{n-1}$ and $\spa{\mathbf{G}} = C_{n-1}$.
\end{proposition}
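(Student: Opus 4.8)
The plan is to apply Proposition~\ref{prop:exp}, exactly as in the proof of Proposition~\ref{prop:exponentiation}, but replacing the prime-factorization argument (which is unavailable on a two-element set) by a Boolean analogue. First I would record that $x \leftarrow y$ equals $x \lor \neg y$, so that $(x \leftarrow y) \leftarrow z = x \lor \neg y \lor \neg z$, which is symmetric in $y$ and $z$; hence $\mathbf{G}$ satisfies $(xy)z \approx (xz)y$. Proposition~\ref{prop:exp} then yields $\spac{\mathbf{G}} \le n^{n-1}$ together with the implication $P_s^{\mathrm{u}} = P_t^{\mathrm{u}} \Rightarrow s^\mathbf{G} = t^\mathbf{G}$, and it reduces the whole statement to proving the converse: for linear terms $s, t \in T(X_\omega)$, if $s^\mathbf{G} = t^\mathbf{G}$ then $P_s^{\mathrm{u}} = P_t^{\mathrm{u}}$. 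Once this converse is established, distinct unordered trees induce distinct operations, so $\spac{\mathbf{G}}$ equals the number $n^{n-1}$ of unordered rooted trees on $n$ labeled vertices, and the final clause of Proposition~\ref{prop:exp} gives $\spa{\mathbf{G}} = C_{n-1}$.

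For the converse I would argue by induction on $n = \varcount{s} = \varcount{t}$, recovering the ordered tree $P_t$ from the operation $t^\mathbf{G}$ one layer at a time. Since $\leftarrow$ is surjective with no repeated rows or columns, Lemma~\ref{lem:surjective} first gives $\var(s) = \var(t)$ and tells us that each factor operation is surjective and depends on all of its variables. Writing the value of the subtree of $P_t$ rooted at a vertex $v$ via the recursion ``$e_v = 0$ if and only if $a_v = 0$ and every child $c$ of $v$ has $e_c = 1$'', one sees that setting the root variable to $1$ forces $t^\mathbf{G} = 1$, whereas for any non-root variable $v$ one can set the root of the relevant child subtree to $1$ and still make the output $0$; thus the root of $P_t$ is the \emph{unique} variable $x_i$ with the property that $t^\mathbf{G}(\mathbf{a}) = 1$ for all $\mathbf{a}$ with $a_i = 1$. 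As $s^\mathbf{G} = t^\mathbf{G}$, this identifies the two roots, so the leftmost variables of $s$ and $t$ coincide.

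The heart of the argument---and the step replacing unique factorization---is recovering the factors. Fixing the common root variable to $0$ turns the operation into $\neg g$, where $g := \bigwedge_\ell e_{t_\ell}$ is the conjunction of the factor operations, which live on the pairwise disjoint variable blocks $\var(t_1), \dots, \var(t_d)$. I would show that each factor operation $e_{t_\ell}$ is \emph{AND-indecomposable}: a factor that is a single variable contributes the literal $a_{r_\ell}$, while a factor with at least two vertices has the form $a_{r_\ell} \lor (\text{a non-constant function of the remaining variables})$, and a function of this shape cannot be written as a conjunction of two non-constant functions on disjoint variable sets (setting $a_{r_\ell} = 1$ would force the other conjunct to be constantly $1$). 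Consequently the finest decomposition of $\neg g$ as a conjunction of non-constant functions on pairwise disjoint variable sets is unique and returns precisely the blocks $\var(t_\ell)$ together with the functions $e_{t_\ell}$. Applying this to both $s$ and $t$, whose associated functions $\neg g$ agree, yields $d = e$, a bijection $\pi$ with $\var(s_\ell) = \var(t_{\pi(\ell)})$, and the equalities $s_\ell^\mathbf{G} = t_{\pi(\ell)}^\mathbf{G}$ of the matched factor operations.

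The induction hypothesis then gives $P_{s_\ell}^{\mathrm{u}} = P_{t_{\pi(\ell)}}^{\mathrm{u}}$ for every $\ell$, and since $s$ and $t$ have the same root and the same multiset of child subtrees up to isomorphism, we conclude $P_s^{\mathrm{u}} = P_t^{\mathrm{u}}$, which completes the converse and hence the proof. I expect the main obstacle to be the indecomposability step: one must verify carefully that every factor operation is AND-indecomposable and that the resulting conjunctive decomposition is genuinely unique, so that it simultaneously recovers the variable partition \emph{and} the individual factor operations. This is exactly what lets the induction proceed, and it is the precise Boolean substitute for the fundamental theorem of arithmetic used in the exponentiation case.
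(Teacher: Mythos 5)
Your proposal is correct, and it shares the paper's overall skeleton: apply Proposition~\ref{prop:exp} after checking the identity $(xy)z \approx (xz)y$, then prove by induction on $\varcount{s}$ that $s^\mathbf{G} = t^\mathbf{G}$ implies $P_s^{\mathrm{u}} = P_t^{\mathrm{u}}$, first identifying the root of the leftmost decomposition and then matching the factors. Where you genuinely diverge from the paper is in how the factors are matched. The paper does this \emph{syntactically}: it first shows, via explicit $0/1$ assignments, that every factor of $s$ has the same leftmost variable as some factor of $t$ (if $L(s_k) = x_\alpha$ were the leftmost variable of no factor of $t$, the assignment sending $x_i, x_\alpha$ to $0$ and everything else to $1$ separates $s^\mathbf{G}$ from $t^\mathbf{G}$), and only afterwards uses the dependence part of Lemma~\ref{lem:surjective} to conclude that matched factors have equal variable sets and induce equal operations. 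You instead match factors \emph{semantically}: fixing the root variable to $0$ turns the operation into the negation of $g := \bigwedge_\ell e_{t_\ell}$, and you recover the blocks $\var(t_\ell)$ together with the operations $e_{t_\ell}$ as the unique finest decomposition of $g$ into a conjunction of non-constant functions on pairwise disjoint variable sets (minor slip: the object being decomposed is $g$, not $\neg g$; since negation is invertible this is harmless). Both of your supporting claims are true: AND-indecomposability of each factor follows exactly as you say from the form $a_{r_\ell} \vee h$ with $h$ non-constant, and the uniqueness of the finest disjoint conjunctive decomposition holds because the partitions of the variable set with respect to which the nonempty set $g^{-1}(1)$ is a product are closed under common refinement; combined with the fact (Lemma~\ref{lem:surjective}) that each factor is non-constant and depends on all of its variables, this forces the two decompositions coming from $s$ and from $t$ to coincide block by block, and the projections -- hence the factor operations -- to agree. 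As for what each route buys: the paper's argument is entirely elementary and self-contained, every step being a concrete assignment exploiting the fact that setting the leftmost variable of a subterm to $1$ forces its value to $1$; your argument isolates a reusable structural lemma (uniqueness of disjoint AND-decomposition of Boolean functions) that is the honest Boolean analogue of the unique-factorization argument in Proposition~\ref{prop:exponentiation}, at the cost of having to prove that lemma -- which, as you yourself anticipated, is the one part of your outline that must be written out in full.
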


\begin{proof}
Observe first that $\mathbf{G}$ satisfies the identity $(x_1 x_2) x_3 \approx (x_1 x_3) x_2$.
Therefore Proposition~\ref{prop:exp} applies to $\mathbf{G}$. 
It remains to show that $s^\mathbf{G} = t^\mathbf{G}$ implies $P_s^{\mathrm{u}} = P_t^{\mathrm{u}}$ for any linear terms $s, t \in T(X_\omega)$.

Assume $s^\mathbf{G} = t^\mathbf{G}$, which implies $\var(s) = \var(t)$ by Lemma~\ref{lem:surjective}.
We show $P_s^{\mathrm{u}} = P_t^{\mathrm{u}}$ by induction on $n := \varcount{s} = \varcount{t}$.
This is trivial for $1 \leq n \leq 2$ and we can assume it holds for linear terms with fewer than $n$ variables, where $n\ge3$.

Let $h \colon X_\omega \to \{0,1\}$ be the assignment with $h(x_i) = 0$ and $h(x_k) = 1$ for $k \neq i$.
Then clearly
\begin{align*}
h(s) &= ( ( \cdots ( (h(x_i) \ast h(s_1)) \ast h(s_2)) \ast \cdots ) \ast h(s_d))
= ( ( \cdots ( ( 0 \ast 1 ) \ast 1 ) \ast \cdots ) \ast 1 )
= 0,
\\
h(t) &= ( ( \cdots ( (h(x_j) \ast h(t_1)) \ast h(t_2)) \ast \cdots ) \ast h(t_e))
=
\begin{cases}
0, & \text{if $i = j$,} \\
1, & \text{if $i \neq j$.}
\end{cases}
\end{align*}
Since $s^\mathbf{G} = t^\mathbf{G}$, we must have $i = j$.

We claim that for every $k \in \nset{d}$, there exists some $\ell \in \nset{e}$ such that $L(s_k) = L(t_\ell)$.
Suppose, to the contrary, that $L(s_k) =: x_\alpha$ is not the leftmost variable of any factor $t_\ell$.
Consider the assignment $h$ that maps $x_i$ and $x_\alpha$ to $0$ and all remaining variables to $1$.
We now have that $h(s_k) = 0$, $h(s_m) = 1$ for any $m \neq k$, and $h(t_1) = \dots = h(t_d) = 1$. This leads to a contradiction:
\begin{align*}
h(s)
&=( ( \cdots ( ( ( ( \cdots ( ( 0 \ast 1 ) \ast 1 ) \ast \cdots ) \ast 1 ) \ast 0 ) \ast 1 ) \ast \cdots ) \ast 1)
= 1,
\\
h(t)
&= ( ( \cdots ( (0 \ast 1) \ast 1) \ast \cdots ) \ast 1)
= 0.
\end{align*}
A similar argument shows that for every $\ell \in \nset{e}$, we have $L(t_\ell) = L(s_k)$ for some $k \in \nset{d}$.
Consequently, $d = e$ and there exists a bijection $\pi \in \SS_d$ such that $L(s_k) = L(t_{\pi(k)})$ for all $k \in \nset{d}$.

Let $\ell \in \nset{d}$, and
suppose $h \colon X_\omega \to \{0,1\}$ is an assignment satisfying $x_i \mapsto 0$ and $L(s_k) \mapsto 1$ for all $k \in \nset{d} \setminus \{\ell\}$.
It is easy to see that then $h(s) = 0 \ast h(s_\ell) = \overline{h(s_\ell)}$, where we use the notation $\overline0:=1$ and $\overline 1:=0$.
Similarly, if $h$ satisfies $x_i \mapsto 0$ and $L(t_k) \mapsto 1$ for all $k \in \nset{d} \setminus \{\ell\}$, then $h(t) = 0 \ast h(t_\ell) = \overline{h(t_\ell)}$.

Considering the kind of assignments described above, we can conclude that $\var(s_\ell) = \var(t_{\pi(\ell)})$ for all $\ell \in \nset{d}$.
(Suppose, to the contrary, that there is a variable $z \in \var(s_\ell) \setminus \var(t_{\pi(\ell)})$.
By Lemma~\ref{lem:surjective}, $s_\ell^\mathbf{G}$ depends on every variable in $\var(s_\ell)$, in particular on $z$; therefore there exist assignments $h, h' \colon \var(s_\ell) \to \{0,1\}$ such that $h(x) = h'(x)$ whenever $x \neq z$ and $h(s_\ell) \neq h'(s_\ell)$.
Extend both $h$ and $h'$ to $X_\omega$ by mapping $x_i \mapsto 0$ and $x_p \mapsto 1$ for all $x_p \in X_\omega \setminus (\var(s_\ell) \cup \{x_i\})$.
Now $h(s) = \overline{h(s_\ell)} \neq \overline{h'(s_\ell)} = h'(s)$.
On the other hand, since $h$ and $h'$ only disagree at $z$ and $z \notin \var(t_{\pi(\ell)})$, it follows that $h(t) = \overline{h(t_{\pi(\ell)})} = \overline{h'(t_{\pi(\ell)})} = h'(t)$.
Consequently, $s^\mathbf{G} \neq t^\mathbf{G}$, a contradiction.
Thus $\var(s_\ell) \subseteq \var(t_{\pi(\ell)})$.
A similar argument shows that $\var(t_{\pi(\ell)}) \subseteq \var(s_\ell)$.)
Now, for all assignments $h$ with $h(x_i) = 0$ and 
$h(L(s_k)) = 1$ for all $k \in \nset{d} \setminus \{\ell\}$
we have $h(s_\ell) = \overline{h(s)} = \overline{h(t)} = h(s_{\pi(\ell)})$, so $s_\ell^\mathbf{G} = t_{\pi(\ell)}^\mathbf{G}$ for all $\ell \in \nset{d}$.
By the induction hypothesis, $P_{s_\ell}^{\mathrm{u}} = P_{t_{\pi(\ell)}}^{\mathrm{u}}$ for all $\ell \in \nset{d}$, and, consequently, $P_s^\mathrm{u} = P_t^\mathrm{u}$.
\end{proof}

\subsection{Negated disjunction (NOR)}
Now we study the ac-spectrum of the groupoid $\mathbf{G} = (\{0,1\}, {\NOR})$, where $\NOR$ is the \emph{negated disjunction} (\emph{NOR}), defined by the rule $x \NOR y = 1$ if and only if $x = y = 0$.
Note that the negated disjunction is surjective and its Cayley table has no two identical columns nor two identical rows.
Hence Lemma~\ref{lem:surjective} applies thereto.

\begin{definition}
\leavevmode	
\begin{enumerate}[label={\upshape(\roman*)}]
\item If $f_i \colon A_i \to B$ ($i \in I$) are functions with pairwise disjoint domains $A_i$, then we can define their \emph{union} $\bigcup_{i \in I} f_i \colon \bigcup_{i \in I} A_i \to B$ by the condition that for all $i \in I$, the restriction of $\bigcup_{i \in I} f_i$ to $A_i$ coincides with $f_i$.
In the case when the index set $I$ is finite, we may use notation such as $f \cup g$ or $f \cup g \cup h$.
\item
Let $\mathbf{G} = (G,{\circ})$ be a groupoid.
Let $t \in T(X_\omega)$ be a linear term.
If $\varcount{t} \geq 2$, then $t = (t_1 t_2)$.
Since $t$ is linear, the sets $\var(t_1)$ and $\var(t_2)$ are disjoint,
and therefore we can write any assignment $h \colon \var(t) \to G$ in a unique way as the union of the two assignments $h_1 \colon \var(t_1) \to G$ and $h_2 \colon \var(t_2) \to G$ that are the restrictions of $h$ to $\var(t_1)$ and $\var(t_2)$, respectively.
Then it clearly holds that $h = h_1 \cup h_2$ and $h(t) = (h_1 \cup h_2)(t) = h_1(t_1) \circ h_2(t_2)$.
\end{enumerate}
\end{definition}

Let $\mathbf{G} = (\{0,1\}, {\NOR})$. 
For a term $t \in T(X_\omega)$, let $\mathsf{T}_t$ and $\mathsf{F}_t$ be the sets of \emph{true} and \emph{false assignments} of $t$, that is,
\begin{align*}
\mathsf{T}_t &:= \{ h \colon \var(t) \to \{0,1\} \mid h(t) = 1 \}, \\
\mathsf{F}_t &:= \{ h \colon \var(t) \to \{0,1\} \mid h(t) = 0 \}.
\end{align*}
If $t$ is a linear term with $\varcount{t} \geq 2$, then $t = (t_1 t_2)$, $\var(t_1) \cap \var(t_2) = \emptyset$, and we have
\begin{align*}
\mathsf{T}_t &= \{ h_1 \cup h_2 \mid h_1 \in \mathsf{F}_{t_1} \wedge h_2 \in \mathsf{F}_{t_2} \}, \\
\mathsf{F}_t &= \{ h_1 \cup h_2 \mid h_1 \in \mathsf{T}_{t_1} \vee h_2 \in \mathsf{T}_{t_2} \}.
\end{align*}

\begin{lemma}\label{lem:aux-prod}
Let $k \in \{2, 3, 4\}$, let $S_1$, \dots, $S_k$ be nonempty sets, and let $\mathbf{S} := S_1 \times \dots \times S_k$.
\begin{enumerate}[label={\upshape(\roman*)}]
\item\label{lem:aux-prod:2}
If $k = 2$, let
$A \subseteq S_1$, $B \subseteq S_2$, $C \subseteq S_1$, $D \subseteq S_2$,
and let
$U := A \times B$, $V := C \times D$.

\item\label{lem:aux-prod:3}
If $k = 3$, let
$A \subseteq S_1 \times S_2$, $B \subseteq S_3$, $C \subseteq S_1$, $D \subseteq S_2 \times S_3$, and let
\begin{align*}
U := \{(a_1, a_2, a_3) \in \mathbf{S} \mid (a_1, a_2) \in A, a_3 \in B\}, \\
V := \{(a_1, a_2, a_3) \in \mathbf{S} \mid a_1 \in C, (a_2, a_3) \in D\}.
\end{align*}

\item\label{lem:aux-prod:4}
If $k = 4$, let
$A \subseteq S_1 \times S_2$, $B \subseteq S_3 \times S_4$, $C \subseteq S_1 \times S_3$, $D \subseteq S_2 \times S_4$, and let
\begin{align*}
U := \{(a_1, a_2, a_3, a_4) \in \mathbf{S} \mid (a_1, a_2) \in A, (a_3, a_4) \in B\}, \\
V := \{(a_1, a_2, a_3, a_4) \in \mathbf{S} \mid (a_1, a_3) \in C, (a_2, a_4) \in D\}.
\end{align*}
\end{enumerate}
Assume that $U = \overline{V} := \mathbf{S} \setminus V$.
Then there exists a $j \in \nset{k}$ such that
for all tuples $\mathbf{a} = (a_1, \dots, a_k)$ and $\mathbf{b} = (b_1, \dots, b_k)$ in $\mathbf{S}$ satisfying $a_i = b_i$ for all $i \neq j$, we have $\{\mathbf{a}, \mathbf{b}\} \subseteq U$ or $\{\mathbf{a}, \mathbf{b}\} \subseteq V$.
\end{lemma}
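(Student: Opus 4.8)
The plan is to read the conclusion as a statement about independence of coordinates: the desired $j$ is exactly an index such that the two-block partition $\{U, V\}$ of $\mathbf{S}$ does not depend on the $j$-th coordinate (equivalently, the indicator $\chi_U$ is constant along the $j$-th coordinate). First I would dispose of the degenerate situations $U = \emptyset$ and $U = \mathbf{S}$ (equivalently, one of the defining factors is empty or full), in which $\chi_U$ is constant and every $j \in \nset{k}$ works. All three cases \ref{lem:aux-prod:2}--\ref{lem:aux-prod:4} then follow the same template; the genuinely delicate one is $k = 4$, where the two groupings $\{1,2\}\mid\{3,4\}$ and $\{1,3\}\mid\{2,4\}$ cross, so I would treat that case in detail and handle $k = 2, 3$ as simpler instances of the same idea.

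The key tool is a fiber (slicing) analysis combined with the elementary fact that two nonempty Cartesian products are equal only if their corresponding factors are equal. For $k = 4$, fix a pair $(a_1, a_2) \in A$ and slice $\mathbf{S}$ over it, varying $(a_3, a_4)$. On this slice the ``$U$-side'' is $\{(a_1,a_2)\} \times B$, so its ``$V$-side'' complement is $\overline{B}$; computing the same set from $V = C \times D$ gives $C_{a_1} \times D_{a_2}$, where $C_{a_1} := \{a_3 : (a_1,a_3) \in C\}$ and $D_{a_2} := \{a_4 : (a_2,a_4)\in D\}$. Hence $\overline{B} = C_{a_1}\times D_{a_2}$ for every $(a_1,a_2)\in A$; since $\overline{B}$ is a fixed nonempty product, all these factors agree and $\overline{B} = C^{*}\times D^{*}$ with $C^{*} = \pi_3(\overline{B})$, $D^{*} = \pi_4(\overline{B})$. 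Slicing symmetrically over $(a_3,a_4)\in B$ yields $\overline{A} = C_{*}\times D_{*}$. Thus both $\overline{A}$ and $\overline{B}$ are forced to be Cartesian products.

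With this structure in hand, I would finish as follows. If any of the four factors is the whole coordinate set --- say $C^{*} = S_3$ --- then $B = \overline{C^{*}\times D^{*}}$ depends on a single coordinate, so $U$ is independent of the other, and the corresponding $j$ works (similarly for $D^{*} = S_4$, $C_{*} = S_1$, $D_{*} = S_2$). In the remaining case all four factors are proper, and I would derive a contradiction: from $\overline{A} = C_{*}\times D_{*}$ with $D_{*}\subsetneq S_2$ one checks that every row of $A$ is nonempty, so $\pi_1(A) = S_1$ and hence $C_{a_1} = C^{*}$ for all $a_1 \in S_1$; on the other hand, slicing over a pair $(a_1,a_2)\notin A$ forces $C_{a_1} = S_3$ for every $a_1 \in C_{*}$, and since $C_{*}\neq\emptyset$ this gives $C^{*} = S_3$, contradicting properness. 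Therefore the remaining case does not occur and an independent coordinate always exists.

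The hardest step is exactly this last combination: one must extract the global equalities $\pi_1(A) = S_1$ and $C_{a_1}\equiv C^{*}$ and play them against the ``off-$A$'' slices, which is where the crossing of the two groupings is really used. Cases \ref{lem:aux-prod:2} and \ref{lem:aux-prod:3} run along the same lines but are shorter, since there the groupings share a block and the product structure of the complements is immediate from a single family of slices.
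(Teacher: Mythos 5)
Your slicing approach is genuinely different from the paper's proof. The paper argues by contradiction: assuming no coordinate $j$ works, it extracts for each $i \in \nset{4}$ a pair of tuples differing only in coordinate $i$ with one tuple in $U$ and one in $V$, and then propagates membership through the product structure of $U$ and $V$ until it manufactures a tuple lying in $U \cap V$. Your direct structural analysis --- the slice identities $\overline{B} = C_{a_1} \times D_{a_2}$ for $(a_1,a_2) \in A$ and $\overline{A} = C^{a_3} \times D^{a_4}$ for $(a_3,a_4) \in B$, followed by a case split on the factors --- is a legitimate alternative, and the contradiction you derive when all four factors are proper \emph{and nonempty} is correct.

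However, there is a genuine gap in the boundary cases. Your opening parenthetical is false: $U \in \{\emptyset, \mathbf{S}\}$ is \emph{not} equivalent to ``one of the defining factors is empty or full,'' and excluding only $U \in \{\emptyset, \mathbf{S}\}$ does not justify your later assertions that $\overline{B}$ and $\overline{A}$ are nonempty products, nor that $C_* \neq \emptyset$. Concretely, take $S_1 = S_2 = S_3 = S_4 = \{0,1\}$, $A = S_1 \times S_2$, $B = (S_3 \times S_4) \setminus \{(0,0)\}$, $C = S_1 \times \{0\}$, $D = S_2 \times \{0\}$. Then $U = \overline{V}$ and $U \notin \{\emptyset, \mathbf{S}\}$, so you are in your main case; but $\overline{A} = \emptyset$, hence $C_* = D_* = \emptyset$, all four factors $C^*, D^*, C_*, D_*$ are proper, and your final step ``since $C_* \neq \emptyset$ this gives $C^* = S_3$'' fails --- as it must, since this configuration satisfies the hypothesis of the lemma, so no contradiction is available; the conclusion holds here with $j = 1$, which your case analysis never produces. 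The repair is short: before slicing, dispose of the cases where $A$ or $B$ is empty or full (if $A$ is empty or full, membership in $U$ is independent of coordinates $1$ and $2$, so $j = 1$ works; if $B$ is empty or full, $j = 3$ works). Once $A$ and $B$ are proper and nonempty, $\overline{A}$ and $\overline{B}$ are nonempty, all of $C^*, D^*, C_*, D_*$ are well defined and nonempty, and your argument goes through. The same adjustment is needed in the $k = 2$ and $k = 3$ cases, which you only sketch.
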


\begin{proof}
Suppose, to the contrary, that no such $j$ exists.
The argument is slightly different for the different values of $k$, and we consider separately the three different cases.

\ref{lem:aux-prod:2}
Assume $k = 2$.
There exist $p_i, q_i \in S_i$ ($i \in \nset{2}$) and
$p'_1 \in S_1$, $q'_2 \in S_2$ such that
$(p_1, p_2), (q_1, q_2) \in U$,
$(p'_1, p_2), (q_1, q'_2) \in V$.
By the definition of the sets $U$ and $V$, we have
$p_1, q_1 \in A$,
$p_2, q_2 \in B$,
$p'_1, q_1 \in C$,
$p_2, q'_2 \in D$.

Since $(q_1, q_2) \in U = \overline{V}$ and $q_1 \in C$, we must have $q_2 \notin D$.
It follows that $(z_1, q_2) \in \overline{V} = U$ for all $z_1 \in S_1$.
Since $(p'_1, p_2) \in V = \overline{U}$ and $p_2 \in B$, we must have $p'_1 \notin A$.
It follows that $(p'_1, z_2) \in \overline{U} = V$ for all $z_2 \in S_2$.
Therefore $(p'_1, q_2) \in U \cap V$, which is a contradiction to our assumption that $U = \overline{V}$.

\ref{lem:aux-prod:3}
Assume $k = 3$.
There exist $p_i, q_i, r_i \in S_i$ ($i \in \nset{3}$) and
$p'_1 \in S_1$, $q'_2 \in S_2$, $r'_3 \in S_3$ such that
\begin{gather*}
(p_1, p_2, p_3), (q_1, q_2, q_3), (r_1, r_2, r_3) \in U,
\\
(p'_1, p_2, p_3), (q_1, q'_2, q_3), (r_1, r_2, r'_3) \in V.
\end{gather*}
By the definition of the sets $U$ and $V$, we have
\begin{align*}
& (p_1, p_2), (q_1, q_2), (r_1, r_2) \in A, &
& p_3, q_3, r_3 \in B, \\
& p'_1, q_1, r_1 \in C, &
& (p_2, p_3), (q'_2, q_3), (r_3, r'_3) \in D.
\end{align*}

Since $(p_1, p_2, p_3) \in U = \overline{V}$ and $(p_2, p_3) \in D$, we must have $p_1 \notin C$.
It follows that $(p_1, z_2, z_3) \in \overline{V} = U$ for all $z_2 \in S_2$ and $z_3 \in S_3$, and,
in particular,
$z_3 \in B$ for all $z_3 \in S_3$, i.e., $B = S_3$.
On the other hand, since $(r_1, r_2, r'_3) \in V = \overline{U}$ and $(r_1, r_2) \in A$, we must have $r'_3 \notin B = S_3$.
We have reached a contradiction.

\ref{lem:aux-prod:4}
Assume $k = 4$.
There exist $p_i, q_i, r_i, s_i \in S_i$ for all $i \in \nset{4}$ and
$p'_1 \in S_1$, $q'_2 \in S_2$, $r'_3 \in S_3$, $s'_4 \in S_4$ such that
\begin{gather*}
(p_1, p_2, p_3, p_4), (q_1, q_2, q_3, q_4), (r_1, r_2, r_3, r_4), (s_1, s_2, s_3, s_4) \in U,
\\
(p'_1, p_2, p_3, p_4), (q_1, q'_2, q_3, q_4), (r_1, r_2, r'_3, r_4), (s_1, s_2, s_3, s'_4) \in V.
\end{gather*}
By the definition of the sets $U$ and $V$, we have
\begin{align*}
& (p_1, p_2), (q_1, q_2), (r_1, r_2), (s_1, s_2) \in A, &
& (p_3, p_4), (q_3, q_4), (r_3, r_4), (s_3, s_4) \in B, \\
& (p'_1, p_3), (q_1, q_3), (r_1, r'_3), (s_1, s_3) \in C, &
& (p_2, p_4), (q'_2, q_4), (r_2, r_4), (s_2, s'_4) \in D.
\end{align*}

Since $(p_1, p_2, p_3, p_4) \in U = \overline{V}$ and $(p_2, p_4) \in D$, we must have $(p_1, p_3) \notin C$.
It follows that $(p_1, z_2, p_3, z_4) \in \overline{V} = U$ for all $z_2 \in S_2$ and $z_4 \in S_4$, and,
in particular, $(p_1, z_2) \in A$ for all $z_2 \in S_2$.

Since $(s_1, s_2, s_3, s_4) \in U = \overline{V}$ and $(s_1, s_3) \in C$, we must have $(s_2, s_4) \notin D$.
It follows that $(z_1, s_2, z_3, s_4) \in \overline{V} = U$ for all $z_1 \in S_1$ and $z_3 \in S_3$, and,
in particular, $(z_3, s_4) \in B$ for all $z_3 \in S_3$.

Since $(r_1, r_2, r'_3, r_4) \in V = \overline{U}$ and $(r_1, r_2) \in A$, we must have $(r'_3, r_4) \notin B$.
It follows that $(z_1, z_2, r'_3, r_4) \in \overline{U} = V$ for all $z_1 \in S_1$ and $z_2 \in S_2$, and,
in particular, $(z_1, r'_3) \in C$ for all $z_1 \in S_1$.

Since $(q_1, q'_2, q_3, q_4) \in V = \overline{U}$ and $(q_3, q_4) \in B$, we must have $(q_1, q'_2) \notin A$.
It follows that $(q_1, q'_2, z_3, z_4) \in \overline{U} = V$ for all $z_3 \in S_3$ and $z_4 \in S_4$, and,
in particular, $(q'_2, z_4) \in D$ for all $z_4 \in S_4$.

Therefore
$(p_1, q'_2) \in A$, $(r'_3, s_4) \in B$, $(p_1, r'_3) \in C$, and $(q'_2, s_4) \in D$, so
$(p_1, q'_2, r'_3, s_4) \in U \cap V$.
This is a contradiction to our assumption that $U = \overline{V}$.
\end{proof}

\begin{proposition}\label{prop:Sheffer}
For $\mathbf{G} = (\{0,1\}, {\NOR})$, we have $\spac{\mathbf{G}} = D_{n-1}$.
\end{proposition}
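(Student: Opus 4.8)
The plan is to pair the commutativity of $\NOR$ with an inductive analysis of the true/false assignment sets $\mathsf{T}_t,\mathsf{F}_t$, reserving the technical Lemma~\ref{lem:aux-prod} for the decisive step. Since $\NOR$ is commutative we already have $\spac{\mathbf{G}}\le D_{n-1}$, so it suffices to prove the matching lower bound, namely that distinct unordered leaf-labeled binary trees induce distinct term operations. Equivalently, I will prove by induction on $n:=\varcount{s}=\varcount{t}$ the statement: for linear terms $s,t$ with $\var(s)=\var(t)$, if $T_s^\mathrm{u}\ne T_t^\mathrm{u}$ then $s^\mathbf{G}\ne t^\mathbf{G}$. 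Throughout I identify $s^\mathbf{G}=t^\mathbf{G}$ with $\mathsf{T}_s=\mathsf{T}_t$, and I use that $\NOR$ is surjective with no repeated rows or columns, so that by Lemma~\ref{lem:surjective} every $u^\mathbf{G}$ is surjective (whence $\mathsf{T}_u,\mathsf{F}_u\ne\emptyset$) and depends on every variable of $u$. The cases $n\le 2$ are vacuous, as there are no two distinct unordered trees.

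For the inductive step write $s=(s_1 s_2)$ and $t=(t_1 t_2)$, and recall that $\mathsf{T}_s=\mathsf{F}_{s_1}\times\mathsf{F}_{s_2}$ and $\mathsf{T}_t=\mathsf{F}_{t_1}\times\mathsf{F}_{t_2}$ with respect to the partitions $\pi_s:=\{\var(s_1),\var(s_2)\}$ and $\pi_t:=\{\var(t_1),\var(t_2)\}$ of $\var(s)=\var(t)$. Assume, toward a contradiction, that $\mathsf{T}_s=\mathsf{T}_t$. If $\pi_s=\pi_t$, then after swapping $t_1,t_2$ if necessary (allowed by commutativity) we have $\var(s_i)=\var(t_i)$ for $i=1,2$; equality of two product sets over the \emph{same} splitting, together with nonemptiness of all factors, forces $\mathsf{F}_{s_i}=\mathsf{F}_{t_i}$, i.e.\ $s_i^\mathbf{G}=t_i^\mathbf{G}$. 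Since $T_s^\mathrm{u}\ne T_t^\mathrm{u}$ and the two children carry disjoint label sets, some child satisfies $T_{s_i}^\mathrm{u}\ne T_{t_i}^\mathrm{u}$, and the induction hypothesis yields $s_i^\mathbf{G}\ne t_i^\mathbf{G}$, a contradiction.

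The substantial case is $\pi_s\ne\pi_t$. Here some part, say $A:=\var(s_i)$, meets both blocks of $\pi_t$ (otherwise $\pi_s=\pi_t$); hence $s_i$ is not a leaf and $A=(A\cap\var(t_1))\sqcup(A\cap\var(t_2))$ is a genuine $2$-splitting into nonempty parts. Fixing any $\beta\in\mathsf{F}_{s_{3-i}}$ (nonempty by surjectivity) and reading off the corresponding fiber of $\mathsf{T}_s=\mathsf{T}_t$ through the product structure of $\mathsf{T}_t=\mathsf{F}_{t_1}\times\mathsf{F}_{t_2}$, a direct computation shows that the assumption $\mathsf{T}_s=\mathsf{T}_t$ forces $\mathsf{F}_{s_i}$ to be a product set with respect to the splitting $(A\cap\var(t_1))\mid(A\cap\var(t_2))$. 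On the other hand, $\mathsf{T}_{s_i}=\mathsf{F}_{s_{i1}}\times\mathsf{F}_{s_{i2}}$ is a product set with respect to $s_i$'s own splitting $\var(s_{i1})\mid\var(s_{i2})$. Thus $\mathsf{F}_{s_i}$ and $\mathsf{T}_{s_i}=\overline{\mathsf{F}_{s_i}}$ are complementary product sets over two $2$-splittings of $A$, which is exactly the configuration of Lemma~\ref{lem:aux-prod} with $k\in\{2,3,4\}$ equal to the number of nonempty blocks of the common refinement of the two splittings. The lemma produces a nonempty refinement block whose coordinates may be flipped without changing membership in $\mathsf{T}_{s_i}$ versus $\mathsf{F}_{s_i}$; that is, $s_i^\mathbf{G}$ does not depend on the variables in that block, contradicting that $s_i^\mathbf{G}$ depends on all of its variables.

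This contradiction establishes the inductive claim, so $s^\mathbf{G}=t^\mathbf{G}$ implies $T_s^\mathrm{u}=T_t^\mathrm{u}$; hence the $D_{n-1}$ unordered leaf-labeled trees induce pairwise distinct operations, giving $\spac{\mathbf{G}}=D_{n-1}$. I expect the main obstacle to be the case $\pi_s\ne\pi_t$. The key point is that the complementary-product configuration demanded by Lemma~\ref{lem:aux-prod} does \emph{not} appear at the top level, where $\mathsf{T}$ is a product but $\mathsf{F}$ is only a union, but rather one level down at a suitably chosen child $s_i$, after using the hypothesis $\mathsf{T}_s=\mathsf{T}_t$ to recognize $\mathsf{F}_{s_i}$ as a product. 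Carrying out the fiber computation cleanly and matching the blockwise bookkeeping to the three cases $k\in\{2,3,4\}$ of the lemma will be the most delicate part.
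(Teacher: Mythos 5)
Your proposal is correct and follows essentially the same route as the paper's proof: the same induction, the same case split on whether $\{\var(s_1),\var(s_2)\}=\{\var(t_1),\var(t_2)\}$, the same use of Lemma~\ref{lem:surjective} for surjectivity and variable-dependence, and the same application of Lemma~\ref{lem:aux-prod} one level down at a child $s_i$ to contradict dependence on all variables. The only notable deviation is a welcome streamlining in the unequal-partition case: where the paper proves a four-fold product claim ($W=A\times B\times C\times D$ via projections and a mixing argument), you obtain the one fact actually needed -- that $\mathsf{F}_{s_i}$ is a product with respect to the splitting induced by $\{\var(t_1),\var(t_2)\}$ -- directly, by fixing $\beta\in\mathsf{F}_{s_{3-i}}$ and computing the fiber of $\mathsf{T}_s=\mathsf{T}_t$ in two ways.
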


\begin{proof}
Since $\NOR$ is commutative, we have $\spac{\mathbf{G}} \leq D_{n-1}$.
In order to prove that the equality holds, we need to show that for all linear terms $s, t \in T(X_\omega)$, $s^\mathbf{G} = t^\mathbf{G}$ implies that the leaf\hyp{}labeled unordered binary trees $T_s^\mathrm{u}$ and $T_t^\mathrm{u}$ are isomorphic.

Assume $s^\mathbf{G} = t^\mathbf{G}$, which implies $\var(s) = \var(t) =: Y$ by Lemma~\ref{lem:surjective}.
We proceed by induction on $n := \varcount{s} = \varcount{t}$.
The claim is obvious for $n \leq 2$.
Assume now that $n \geq 3$ and that the claim holds for linear terms with fewer than $n$ variables.
Then $s = (s_1 s_2)$ and $t = (t_1 t_2)$.

First consider the case when $\{\var(s_1), \var(s_2)\} = \{\var(t_1), \var(t_2)\}$; by the commutativity of $\NOR$, we may assume that $\var(s_1) = \var(t_1)$ and $\var(s_2) = \var(t_2)$.
Suppose, to the contrary, that $T_s^\mathrm{u}$ and $T_t^\mathrm{u}$ are not isomorphic.
Then $T_{s_1}^\mathrm{u} \not\cong T_{t_1}^\mathrm{u}$ or $T_{s_2}^\mathrm{u} \not\cong T_{t_2}^\mathrm{u}$; without loss of generality, assume that $T_{s_1}^\mathrm{u} \not\cong T_{t_1}^\mathrm{u}$.
By the induction hypothesis, $s_1^\mathbf{G} \neq t_1^\mathbf{G}$, so there exists an assignment $h_1 \colon \var(s_1) \to \{0,1\}$ such that $h_1(s_1) \neq h_1(t_1)$; without loss of generality, assume $h_1(s_1) = 0$ and $h_1(t_1) = 1$.
By Lemma~\ref{lem:surjective}, $s_2^\mathbf{G}$ is surjective, so there is an assignment $h_2 \colon \var(s_2) \to \{0,1\}$ such that $h_2(s_2) = 0$.
Now $(h_1 \cup h_2)(s) = h_1(s_1) \NOR h_2(s_2) = 0 \NOR 0 = 1$ and $(h_1 \cup h_2)(t) = h_1(t_1) \NOR h_2(t_2) = 1 \NOR h_2(t_2) = 0$, so $s^\mathbf{G} \neq t^\mathbf{G}$, a contradiction.

Suppose now that $\{\var(s_1), \var(s_2)\} \neq \{\var(t_1), \var(t_2)\}$.
There are pairwise disjoint sets $Y_1, Y_2, Y_3, Y_4 \subseteq Y$ such that
$\var(s_1) = Y_1 \cup Y_2$,
$\var(s_2) = Y_3 \cup Y_4$,
$\var(t_1) = Y_1 \cup Y_3$,
$\var(t_2) = Y_2 \cup Y_4$,
and at most one of the sets $Y_i$ is empty; without loss of generality, assume that only $Y_4$ is potentially empty.
Let
\[
W :=
\{(h_1, h_2, h_3, h_4) \in \{0,1\}^{Y_1} \times \{0,1\}^{Y_2} \times \{0,1\}^{Y_3} \times \{0,1\}^{Y_4} \mid h_1 \cup h_2 \cup h_3 \cup h_4 \in \mathsf{T}_s = \mathsf{T}_t \}.
\]
Note that for all $(h_1, h_2, h_3, h_4) \in W$, we have $(h_1 \cup h_2)(s_1) = (h_3 \cup h_4)(s_2) = (h_1 \cup h_3)(t_1) = (h_2 \cup h_4)(t_2) = 0$.

\vskip5pt \noindent
\textsf{Claim:} There exist $A \subseteq \{0,1\}^{Y_1}$, $B \subseteq \{0,1\}^{Y_2}$, $C \subseteq \{0,1\}^{Y_3}$, $D \subseteq \{0,1\}^{Y_4}$ such that $W = A \times B \times C \times D$.

\vskip5pt\noindent
\textsf{Proof of the Claim:}
Let $A$, $B$, $C$, and $D$ be the projections of $W$ into the first, second, third, and fourth components, respectively, i.e.,
$A := \{ h_1 \in \{0,1\}^{Y_1} \mid (h_1, h_2, h_3, h_4) \in W\}$ and similarly for $B$, $C$, $D$.
We clearly have $W \subseteq A \times B \times C \times D$.
In order to prove the reverse inclusion, let $(a, b, c, d) \in A \times B \times C \times D$.
Then there exist $(a_i, b_i, c_i, d_i) \in W$ for all $i \in \nset{4}$ such that $a = a_1$, $b = b_2$, $c = c_3$, $d = d_4$.
Now $(a_1 \cup c_1)(t_1) = 0$ and $(b_2 \cup d_2)(t_2) = 0$, so $(a_1 \cup b_2 \cup c_1 \cup d_2)(t) = 1$, i.e., $(a_1, b_2, c_1, d_2) \in W$.
Similarly, $(a_3 \cup c_3)(t_1) = 0$ and $(b_4 \cup d_4)(t_2) = 0$, so $(a_3 \cup b_4 \cup c_3 \cup d_4)(t) = 1$, i.e., $(a_3, b_4, c_3, d_4) \in W$.
It follows that $(a_1 \cup b_2)(s_1) = 0$ and $(c_3 \cup d_4)(s_2) = 0$, so $(a_1 \cup b_2 \cup c_3 \cup d_4)(s) = 1$, i.e., $(a, b, c, d) \in W$.
$\diamond$
\vskip5pt

It now follows from the above claim that
\begin{align*}
\mathsf{F}_{s_1} &= \{ h_1 \cup h_2 \mid h_1 \in A , \, h_2 \in B \}, &
\mathsf{F}_{s_2} &= \{ h_3 \cup h_4 \mid h_3 \in C , \, h_4 \in D \}, \\
\mathsf{F}_{t_1} &= \{ h_1 \cup h_3 \mid h_1 \in A , \, h_3 \in C \}, &
\mathsf{F}_{t_2} &= \{ h_2 \cup h_4 \mid h_2 \in B , \, h_4 \in D \}.
\end{align*}
Let us focus on $s_1$.
Since $Y_1$ and $Y_2$ are nonempty, $\varcount{s_1} \geq 2$, so $s_1 = (s_{11} s_{12})$.
For $i, j \in \nset{2}$, let $Z_{ij} := \var(s_{1i}) \cap Y_j$.
Note that $Z_{11}$ and $Z_{12}$ are not both empty, and $Z_{21}$ and $Z_{22}$ are not both empty; without loss of generality, assume that $Z_{11}$ and $Z_{22}$ are nonempty.
Let
\begin{align*}
\mathsf{T} &:= \{ (a, b, c, d) \in \{0,1\}^{Z_{11}} \times \{0,1\}^{Z_{12}} \times \{0,1\}^{Z_{21}} \times \{0,1\}^{Z_{22}} \mid a \cup b \cup c \cup d \in \mathsf{T}_{s_1} \}, \\
\mathsf{F} &:= \{ (a, b, c, d) \in \{0,1\}^{Z_{11}} \times \{0,1\}^{Z_{12}} \times \{0,1\}^{Z_{21}} \times \{0,1\}^{Z_{22}} \mid a \cup b \cup c \cup d \in \mathsf{F}_{s_1} \}.
\end{align*}
We clearly have $\mathsf{T} = \overline{\mathsf{F}} := (\{0,1\}^{Z_{11}} \times \{0,1\}^{Z_{12}} \times \{0,1\}^{Z_{21}} \times \{0,1\}^{Z_{22}}) \setminus \mathsf{F}$.
Moreover,
\begin{align*}
\mathsf{T} &=
\{ (a, b, c, d) \in \{0,1\}^{Z_{11}} \times \{0,1\}^{Z_{12}} \times \{0,1\}^{Z_{21}} \times \{0,1\}^{Z_{22}} \mid a \cup b \in \mathsf{F}_{s_{11}}, \, c \cup d \in \mathsf{F}_{s_{12}} \}, \\
\mathsf{F} &=
\{ (a, b, c, d) \in \{0,1\}^{Z_{11}} \times \{0,1\}^{Z_{12}} \times \{0,1\}^{Z_{21}} \times \{0,1\}^{Z_{22}} \mid a \cup c \in A , \, b \cup d \in B \}.
\end{align*}
It now follows from Lemma~\ref{lem:aux-prod} (statement \ref{lem:aux-prod:2} if $Z_{12} = Z_{21} = \emptyset$; statement \ref{lem:aux-prod:3} if one of $Z_{12}$ and $Z_{21}$ is empty and the other is nonempty; statement \ref{lem:aux-prod:4} if both $Z_{12}$ and $Z_{21}$ are nonempty) that there is a variable $x_j \in \var(s_1)$ on which $s_1^\mathbf{G}$ does not depend.
This is in direct contradiction to Lemma~\ref{lem:surjective}, which asserts that $s_1^\mathbf{G}$ depends on every variable in $\var(s_1)$.
Therefore the case when $\{\var(s_1), \var(s_2)\} \neq \{\var(t_1), \var(t_2)\}$ does not occur.
\end{proof}

\section{Depth equivalence relations}
\label{sec:DepthEq}

In this section we study binary operations $*$ satisfying the property that two full linear terms agree on $*$ if and only if their corresponding binary trees are equivalent with respect to certain attributes related to the depths of the leaves.
More precisely, two binary trees are considered equivalent if their (right) depth sequences are congruent modulo some positive integer $k$.
Here all binary trees are ordered and their leaves are unlabeled unless otherwise stated.
 
\subsection{The $k$-right-depth-equivalence}\label{sec:RightDepth}

\begin{definition}
A groupoid $\mathbf{G} = (G,{*})$ and the corresponding binary operation $*$ are said to be \emph{right $k$\hyp{}associative} if
$\mathbf{G}$ satisfies the identity
\[
( [ x_1 x_2 \cdots x_{k+1} ]_\mathrm{R} x_{k+2} ) \approx ( x_1 [ x_2 \cdots x_{k+2} ]_\mathrm{R} ),
\]
where $[ \cdots ]_\mathrm{R}$ is a shorthand for the rightmost bracketing of the variables occurring between the square brackets, e.g., $[ x_1 x_2 \cdots x_{k+1} ]_\mathrm{R} = ( x_1 ( x_2 ( \cdots ( x_k x_{k+1} ) \cdots ) ) )$.
One can also define the \emph{left $k$\hyp{}associativity} similarly.
The left or right $k$\hyp{}associativity becomes the usual associativity when $k = 1$.
\end{definition}

\begin{example}
Typical examples of $k$-associative operations are the ones defined by $a * b := a + \omega b$ for all $a,b\in\CC$, where $\omega = e^{2 \pi i / k}$ is a $k$\hyp{}th primitive root of unity. 
This reduces to addition and subtraction when $k = 1, 2$, respectively.
\end{example}

Previous work~\cite{CatMod} showed that the equivalence relation on binary trees induced by the left $k$\hyp{}associativity is the same as the congruence relation on the left depth sequences of binary trees modulo $k$; this is called the \emph{$k$\hyp{}left\hyp{}depth\hyp{}equivalence relation} by the second author.
The number of equivalence classes is called the \emph{$k$-modular Catalan number}, which counts many restricted families of Catalan objects and has interesting closed formulas~\cite{CatMod}.
Of course, the right $k$\hyp{}associativity corresponds to the \emph{$k$\hyp{}right\hyp{}depth\hyp{}equivalence relation}, whose equivalence classes are also counted by the $k$\hyp{}modular Catalan number.

\begin{definition}
The $k$\hyp{}right\hyp{}depth\hyp{}equivalence relation extends immediately from binary trees with unlabeled leaves to ones with labeled leaves.
Let $T$ and $T'$ be binary trees with $n$ leaves labeled by $x_1, \dots, x_n$ (in an arbitrary order).
We say that $T$ and $T'$ are \emph{$k$\hyp{}right\hyp{}depth\hyp{}equivalent} if $\rho_T(x_i) \equiv \rho_{T'}(x_i) \pmod{k}$ for all $i \in \nset{n}$, i.e., the right depth sequences $\rho_T$ and $\rho_{T'}$ (see Subsection~\ref{sec:trees} for the definition of the right depth sequence) are componentwise congruent modulo $k$.
\end{definition}

Now we consider a stronger form of the right $k$\hyp{}associativity.
Suppose that a binary operation $*$ satisfies the property that any two full linear terms agree on $*$ if and only if $T_s$ and $T_t$ are $k$\hyp{}right\hyp{}depth\hyp{}equivalent, i.e., 
\begin{equation}\label{eq:k-depth}
\forall s, t\in F_n,\ s^{*} = t^{*} \Longleftrightarrow \rho_{T_s}(x_i) \equiv \rho_{T_t}(x_i) \pmod{k}, \quad i = 1, 2,\ldots, n. 
\end{equation}
It is clear that such a binary operation $*$ must be $k$\hyp{}right\hyp{}associative.
The above example $a*b := a + e^{2 \pi i / k} b$ satisfies property~\eqref{eq:k-depth} and another example is given by $f*g := xf + yg$ for all $x, y \in \CC[x,y] / (y^k - 1)$~\cite{VarCat}.
The associative spectrum of these examples is given by the $k$\hyp{}modular Catalan numbers mentioned above.
Our goal here is to determine the ac\hyp{}spectrum $\spac{*}$ of a binary operation $*$ satisfying property~\eqref{eq:k-depth} and its exponential generating function.
If $k = 1$ then we clearly have $\spac{*} = 1$ for all $n \geq 1$ with exponential generating function $\sum_{n \geq 1} t^n / n! = e^t - 1$.
Thus we assume $k \geq 2$ in the remainder of this section.

\begin{example}
Let us consider the subtraction operation $-$ on $\CC$.
For $n \geq 2$, one can check that the term operations in $F_n^{-}$ are precisely the operations of the form $(a_1, \dots, a_n) \mapsto \pm a_1 \pm a_2 \cdots \pm a_n$ with at least one plus sign and at least one minus sign.
Hence $\spac{-} = 2^n - 2$ and
\[
\begin{split}
\sum_{n = 1}^\infty \frac{\spac{-}}{n!} t^n 
& = t + \sum_{n \geq 2} \frac{2^n - 2}{n!} t^n \\
& = t + e^{2t} - 1 - 2t - 2(e^t-1-t) \\ 
& = t + e^{2t} - 2e^t + 1 \\
& = t + (e^t-1)^2.
\end{split}
\]
\end{example}

\begin{definition}
Let $T$ be any binary tree with $n \ge 2$ leaves.
For $i \in \nset{k}$, let $n_T(i)$ be the number of leaves in $T$ whose right depth is congruent to $i$ modulo $k$.
We say a sequence $(n_1, \ldots, n_k)$ of nonnegative integers is \emph{\textup{(}right\textup{)} $(n,k)$\hyp{}admissible} if it satisfies 
\begin{enumerate}[label={\upshape(\roman*)}]
\item
$n_1 \geq 1$, $n_k \geq 1$, and $n_1 + n_2 + \dots + n_k = n \geq 2$,
\item
if $n_i = 0$ for some $i \in \{2, 3, \dots, k-2\}$ then $n_{i+1} = 0$, and
\item
if $n_{k-1} = 0$ then $n_k = 1$.
\end{enumerate}
\end{definition}

\begin{lemma}
\label{lem:nk-adm-1}
Let $T$ be any binary tree with $n \geq 2$ leaves. 
Then $(n_T(1), \dots, n_T(k))$ is $(n,k)$\hyp{}admissible.
\end{lemma}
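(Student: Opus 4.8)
The plan is to determine exactly which values occur in the right depth sequence of $T$, and with what multiplicities, and then to read off the three admissibility conditions. The central fact I would establish first is a \emph{no\hyp{}gap property}: the set of right depths occurring among the leaves of $T$ is precisely an initial segment $\{0, 1, \dots, M\}$ of the nonnegative integers, where $M := \max_i \rho_T(i)$ is the maximum right depth (see Subsection~\ref{sec:trees} for the right depth). Everything else is bookkeeping with residues modulo $k$.

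To prove the no-gap property I would show that whenever some leaf has right depth $r \ge 1$, some leaf also has right depth $r-1$; downward induction then fills in all of $0, \dots, M$. Given a leaf $v$ of right depth $r$, let $w$ be the internal vertex from which the deepest (i.e.\ $r$-th) right step on the root-to-$v$ path is taken; then $w$ has right depth $r-1$. Since $T$ is binary, $w$ has a left child, and the leftmost leaf of the subtree rooted at that left child is reached from $w$ by left steps only, so it has right depth $r-1$, as required. Two further elementary observations complete the local picture: the leftmost leaf of $T$ is the \emph{unique} leaf of right depth $0$ (there is only one all-left path from the root), and since $n \ge 2$ the root is internal, so the leftmost leaf of its right subtree has right depth exactly $1$, giving $M \ge 1$.

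With these facts in hand the multiplicities become transparent. For $i \in \{1, \dots, k-1\}$, the least nonnegative integer congruent to $i$ modulo $k$ is $i$ itself, so the class $i$ is represented in $\{0, \dots, M\}$ if and only if $i \le M$; hence $n_T(i) \ge 1$ precisely when $i \le M$, and the indices $i \in \{1, \dots, k-1\}$ with $n_T(i) = 0$ form the final segment $\{M+1, \dots, k-1\}$. Since right depth $0$ always occurs, $n_T(k) \ge 1$; since $M \ge 1$, $n_T(1) \ge 1$; and $n_T(1) + \dots + n_T(k) = n$ is immediate because each leaf contributes to exactly one residue class, giving condition (i). Condition (ii) follows because a final segment is upward closed: if $n_T(i) = 0$ with $2 \le i \le k-2$, then $i \ge M+1$, so $i+1 \in \{M+1, \dots, k-1\}$ and $n_T(i+1) = 0$. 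For condition (iii), $n_T(k-1) = 0$ forces $M \le k-2$, in which case the only multiple of $k$ in $\{0, \dots, M\}$ is $0$, so $n_T(k)$ counts exactly the unique right-depth-$0$ leaf and equals $1$.

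The main obstacle is the no-gap property; after it, the remaining argument is routine. I would phrase the inductive step about the vertex $w$ with care, since it is the one place where the binary-tree structure---every internal vertex having both a left and a right child---is used in an essential way.
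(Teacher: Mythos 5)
Your proof is correct, and its engine is the same as the paper's: from a leaf of right depth $r \geq 1$ one produces a leaf of right depth exactly $r-1$, namely the leftmost leaf of the subtree hanging to the left of the last right step on the root-to-leaf path (the paper phrases this with a case split on whether the given leaf is a left or a right child, but it is the same construction). Where you differ is organization. The paper applies this descent separately inside conditions (ii) and (iii), working with residues modulo $k$ throughout, and leaves (iii) as a ``similarly'' remark that implicitly also needs your observation that the leftmost leaf is the \emph{unique} leaf of right depth $0$. You instead prove one global structural statement---the right depths occurring in $T$ form exactly the initial segment $\{0,1,\dots,M\}$, with right depth $0$ achieved by exactly one leaf and with $M \geq 1$---and then deduce all three admissibility conditions by routine modular bookkeeping. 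This buys a cleaner separation between the tree combinatorics and the arithmetic, makes the step left implicit in the paper's treatment of (iii) explicit, and actually yields slightly more than the lemma asserts (an exact description of which residue classes are occupied, and why $n_T(k)=1$ whenever $M \leq k-2$); the paper's version is marginally shorter, since it never needs to name $M$ or state the no-gap property in full.
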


\begin{proof}
(i) We have $n_T(1) \geq 1$ and $n_T(k) \geq 1$ since the two leftmost leaves in $T$ have right depth $0$ and $1$, respectively. 
We have $n_T(1) + n_T(2) + \dots + n_T(k) = n$ since $T$ has $n$ leaves.

(ii) Let $i \in \{2, 3, \dots, k-2\}$.
Suppose there exists a leaf $v$ in $T$ whose right depth is congruent to $i+1$ modulo $k$.
If $v$ is a right child then the leftmost leaf of the subtree rooted at the left sibling $u$ of $v$ has right depth congruent to $i$ modulo $k$.
If $v$ is a left child then it must have an ancestor $u$ whose right depth is one less than that of $v$ (otherwise the right depth of $v$ would be zero), and the leftmost leaf in the subtree rooted at $u$ has right depth congruent to $i$ modulo $k$.
Therefore $n_T(i+1) > 0$ implies $n_T(i) > 0$, or in other words, $n_T(i) = 0$ implies $n_T(i+1) = 0$.

(iii) Similarly to (2), one can show that if a leaf of $T$ is different from the leftmost leaf but has right depth congruent to $k$, then there exists a leaf with right depth congruent to $k-1$ modulo $k$.
Hence $n_T(k-1) = 0$ implies $n_T(k) = 1$. 
\end{proof}

\begin{lemma}
\label{lem:nk-adm-2}
Let $(n_1, \dots, n_k)$ be any $(n,k)$\hyp{}admissible sequence.
Then there exists a binary tree $T$ with $n$ leaves such that $n_T(i) = n_i$ for all $i \in \nset{k}$.
\end{lemma}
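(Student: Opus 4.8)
The plan is to give an explicit construction of $T$ directly from the target sequence $(n_1, \dots, n_k)$, building the tree up one leaf at a time by a single elementary move whose effect on the residue-count vector is completely transparent. Throughout I read indices modulo $k$, with the class of $0$ identified with the index $k$ (so the leftmost leaf, of right depth $0$, is counted by $n_T(k)$).

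The basic move is \emph{cherry expansion}: given a binary tree and a leaf $v$ in it at right depth $r$, replace $v$ by a two-leaf tree, i.e.\ a new internal node with two leaf children. The left child then has right depth $r$ and the right child has right depth $r+1$, while every other leaf keeps its right depth. Hence this move leaves $n_T(i)$ unchanged for every $i$ except that it increases the count of the residue class of $r+1$ by exactly one, and it adds exactly one leaf overall. Equivalently, to increment $n_T(i)$ by one while fixing all other counts, it suffices to expand any leaf whose right depth lies in the class $i-1 \pmod k$; such a move is available precisely when the current tree has at least one leaf in class $i-1$. Note that once a class is nonempty it stays nonempty, since expanding a class-$(i-1)$ leaf preserves one class-$(i-1)$ leaf (the left child) and produces the new class-$i$ leaf, so a single seed leaf can feed arbitrarily many additions into the next class.

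Starting from the two-leaf tree, whose leaves have right depths $0$ and $1$ and which therefore realizes the sequence with $n_1 = n_k = 1$ and all other entries $0$, I would reach the target by cherry expansions performed in the following order. First, using the leftmost leaf (always in class $k$) as a seed, add $n_1 - 1$ leaves of class $1$. Then, for $s = 2, 3, \dots, j$, where $j$ is the largest index in $\{1, \dots, k-1\}$ with $n_j > 0$, add $n_s$ leaves of class $s$, each time expanding a leaf of class $s-1$. Finally, if $n_k > 1$, add the remaining $n_k - 1$ leaves of class $k$ by expanding leaves of class $k-1$. A bookkeeping count gives $(n_1 - 1) + \sum_{s=2}^{j} n_s + (n_k - 1) = n - 2$ added leaves, so the resulting tree has $n$ leaves and, by the transparent effect of the move, realizes $(n_1, \dots, n_k)$ exactly.

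The crux is to check that at each stage the class we must expand is nonempty, and this is exactly where admissibility enters; I expect this availability check to be the only real obstacle, the remainder being the routine verification above. Condition (ii) guarantees that the nonzero entries among $n_1, \dots, n_{k-1}$ form an unbroken prefix $n_1, \dots, n_j$, so the dependency chain $1 \to 2 \to \dots \to j$ has no gap and each class $s-1$ with $2 \le s \le j$ has already been populated (to $n_{s-1} \ge 1$) by the time we add class $s$; class $k$ is available from the start via the leftmost leaf, which seeds the class-$1$ additions. Condition (iii) handles the last stage: additions to class $k$ are needed only when $n_k > 1$, and then $n_{k-1} > 0$ forces $j = k-1$, so class $k-1$ has been populated and can seed those additions, while if $n_k = 1$ no such additions are required. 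Condition (i) supplies the base case and pins the total leaf count. The degenerate case $k = 2$, where necessarily $j = 1$, is covered by the same argument.
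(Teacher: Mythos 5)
Your proposal is correct, and it takes a genuinely different route from the paper. The paper argues by induction on $n$: when $n_1 > 1$ it realizes $(n_1-1, n_2, \dots, n_k)$ by a smaller tree $T'$ and forms $T' \wedge \mathbf{1}$ (adding one class-$1$ leaf at the root), and when $n_1 = 1$ it realizes the cyclically re-indexed sequence $(n_2, \dots, n_{k-1}, n_k - 1, 1)$ and forms $\mathbf{1} \wedge T'$, which shifts every right depth by one. You instead build $T$ directly by repeated cherry expansions at leaves, populating class $1$, then classes $2, \dots, j$ in order, then class $k$, with no re-indexing of residues at any point. The trade-offs: the paper's induction is compact and mirrors the structure of the companion lemma, but it quietly relies on the modified sequences --- in particular the shifted one --- being again $(n-1,k)$-admissible, a verification left implicit; your construction needs no such check, makes the role of each admissibility condition completely explicit (condition (ii) gives the unbroken prefix feeding the chain of expansions, condition (iii) guarantees a class-$(k-1)$ seed exactly when extra class-$k$ leaves are needed, condition (i) seeds the process and fixes the leaf count), and yields an explicit algorithm, at the cost of a per-stage availability argument and a slightly longer write-up. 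Both proofs are sound; your bookkeeping identity $(n_1 - 1) + \sum_{s=2}^{j} n_s + (n_k - 1) = n - 2$ and the observation that expanding a class-$(i-1)$ leaf preserves a class-$(i-1)$ leaf are exactly the points that make the construction go through.
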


\begin{proof}
We prove this lemma by induction on $n$.
There is no $(n,k)$\hyp{}admissible sequence for $n = 1$, and the claim is trivial for $n = 2$.
Assume $n \geq 3$ and let $(n_1, \dots, n_k)$ be an $(n,k)$\hyp{}admissible sequence.

If $n_1 > 1$ then applying the induction hypothesis to the $(n-1,k)$\hyp{}admissible sequence $(n_1 - 1, n_2, \dots, n_k)$ gives a binary tree $T'$ with $n-1$ leaves such that $n_{T'}(1) = n_1 - 1$ and $n_{T'}(i) = n_i$ for $i \in \{2, 3, \dots, k\}$, and the binary tree $T := T' \wedge \mathbf{1}$ satisfies $n_T(i) = n_i$ for all $i$, where $\mathbf{1}$ is the one-vertex tree.

Assume now $n_1 = 1$.
We must have $n_2 \geq 1$, since $n_2 = 0$ would imply $n_3 = \dots = n_{k-1} = 0$ and $n_k = 1$; hence $n = 2 < 3$, a contradiction.
Therefore we can apply the induction hypothesis to the $(n-1,k)$\hyp{}admissible sequence $(n_2, n_3, \dots, n_{k-1}, n_k-1, 1)$ and get a binary tree $T'$ with $n_{T'}(i-1) = n_i$ for $i \in \{2, 3, \dots, k-1\}$, $n_{T'}(k-1) = n_k - 1$, and $n_{T'}(k) = 1$. 
Then the binary tree $T := \mathbf{1} \wedge T'$ satisfies $n_T(1) = n_{T'}(k) = 1$, $n_T(i) = n_{T'}(i-1) = n_i$ for $i \in \{2, 3, \dots, k-1\}$, and $n_T(k) = n_{T'}(k-1) + 1 = n_k$.
\end{proof}

\begin{remark}
If $i \in \{1, k\}$ then let $m_T(i) := n_T(i) - 1$; otherwise let $m_T(i) := n_T(i)$.
It follows immediately from the previous two lemmas that
the sequences $(m_T(1), \dots, m_T(k))$ obtained from binary trees $T$ with $n$ leaves are precisely those sequences $(m_1, \dots, m_k)$ of nonnegative integers such that 
\begin{enumerate}[label={\upshape(\roman*)}]
\item $m_1 + m_2 + \dots + m_k = n - 2$, and
\item if $m_i = 0$ for some $i \in \{2, 3, \dots, k - 1\}$ then $m_{i+1} = 0$.
\end{enumerate}
By removing the trailing zeros, one sees that either $(m_1, \dots, m_k)$ corresponds to a composition (i.e., an ordered partition) of $n - 2$ with length at most $k$ if $m_1 > 0$, or $(m_2, \dots, m_k)$ corresponds to a composition of $n - 2$ with length at most $k - 1$ if $m_1 = 0$.
\end{remark}

Recall that the number of partitions of the set $\nset{n} = \{1, 2, \dots, n\}$ into $k$ (unordered) blocks is the \emph{Stirling number of the second kind} $S(n,k)$ 
whose exponential generating function is 
\[
\sum_{n \geq 1} S(n,k) \frac{t^n}{n!} = \frac{(e^t-1)^k}{k!}.
\]

\begin{theorem}\label{thm:k-associative}
Let $*$ be a binary operation satisfying property~\eqref{eq:k-depth} with $k \geq 2$.
Then
\[ 
\spac{*} = k! S(n,k) + n \sum_{0 \leq i \leq k-2} i! S(n-1,i), 
\qquad \forall n \geq 1,
\]
\[ \sum_{n=1}^\infty \frac{\spac{*}}{n!} t^n = (e^t - 1)^k + \sum_{0 \leq i \leq k-2} t(e^t-1)^{i}.
\]
\end{theorem}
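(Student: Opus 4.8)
The plan is to recast $\spac{*}$ as a counting problem about reduced right depth sequences and then evaluate the resulting sum. By property~\eqref{eq:k-depth}, two full linear terms $s, t \in F_n$ induce the same operation exactly when $\rho_{T_s}$ and $\rho_{T_t}$ are componentwise congruent modulo $k$, so $\spac{*}$ equals the number of distinct functions $\phi \colon \nset{n} \to \nset{k}$ (identifying the label $x_i$ with $i$, and a right depth $r$ with the unique residue in $\nset{k}$ congruent to $r$ modulo $k$) that arise as $x_i \mapsto \rho_T(x_i) \bmod k$ for some binary tree $T$ with $n$ leaves labeled $x_1, \dots, x_n$. I will call such a $\phi$ \emph{realizable} and its \emph{profile} the tuple $(\card{\phi^{-1}(1)}, \dots, \card{\phi^{-1}(k)})$.

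First I would characterize realizability. If $\phi$ is realized by $T$, then its profile equals $(n_T(1), \dots, n_T(k))$, which is $(n,k)$-admissible by Lemma~\ref{lem:nk-adm-1}. Conversely, given an admissible profile $(n_1, \dots, n_k)$, Lemma~\ref{lem:nk-adm-2} supplies an unlabeled tree $T_0$ with $n_{T_0}(i) = n_i$; since a full linear term permits the leaves to be labeled arbitrarily, I may assign the labels in $\phi^{-1}(i)$ to the $n_i$ leaves of right depth congruent to $i$, thereby realizing $\phi$. Hence $\phi$ is realizable if and only if its profile is $(n,k)$-admissible. As the number of functions with a prescribed profile is the multinomial coefficient $\binom{n}{n_1, \dots, n_k}$, and distinct profiles give disjoint families of functions, I obtain
\[
\spac{*} = \sum_{(n_1, \dots, n_k)\ (n,k)\text{-admissible}} \binom{n}{n_1, \dots, n_k}.
\]

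Finally I would evaluate this sum using the shape of admissible profiles: condition (ii) forces the nonzero entries among $n_1, \dots, n_{k-1}$ to form an initial run $n_1, \dots, n_j \geq 1$, $n_{j+1} = \dots = n_{k-1} = 0$, with $n_k \geq 1$ always, so I split according to $j$. For $j = k-1$ every residue is used, and summing the multinomials over all compositions into $k$ positive parts counts surjections $\nset{n} \to \nset{k}$, giving $k!\,S(n,k)$. For $j \leq k-2$ condition (iii) forces $n_k = 1$; singling out the unique label of residue $k$ ($n$ choices) and distributing the remaining $n-1$ labels surjectively onto $\{1, \dots, j\}$ yields $n\cdot j!\,S(n-1,j)$. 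Summing over $j \in \{1, \dots, k-2\}$ reproduces the stated formula, once I note that the $n=1$ base case (a single leaf forces $\phi$ constant, so $\spac[1]{*}=1$) is accounted for by the $i=0$ summand $0!\,S(n-1,0)$, which vanishes for $n \geq 2$. The exponential generating function then follows immediately from the standard identities $\sum_n k!\,S(n,k)\,t^n/n! = (e^t-1)^k$ and $\sum_n n\,i!\,S(n-1,i)\,t^n/n! = t(e^t-1)^i$. I expect the main obstacle to be the careful bookkeeping in this case split --- in particular verifying that every labeling of an admissible-profile tree is legitimate (so that the full multinomial coefficient is attained) and reconciling the index range $\{0, \dots, k-2\}$ of the closed form with the range $\{1, \dots, k-2\}$ produced by the profiles together with the $n=1$ boundary case.
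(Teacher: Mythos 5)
Your proposal is correct and matches the paper's own proof essentially step for step: both reduce $\spac{*}$ via property~\eqref{eq:k-depth} to counting residue sequences modulo $k$ of right depths, both invoke Lemmas~\ref{lem:nk-adm-1} and~\ref{lem:nk-adm-2} to characterize the realizable profiles as exactly the $(n,k)$\hyp{}admissible ones, and both evaluate the count by the same case split --- all residues used, giving $k!\,S(n,k)$, versus a truncated profile forcing $n_k = 1$, giving $n\sum_{j} j!\,S(n-1,j)$ --- followed by the same standard Stirling-number generating-function identities. Your explicit justification of the labeling freedom (why the full multinomial coefficient is attained) and of the $n=1$ boundary via the $i=0$ summand makes precise two points the paper passes over quickly, but the argument is the same.
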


\begin{proof}
We prove the desired formula for $\spac{*}$ by induction.
It is trivial when $n = 1$. Assume $n \geq 2$ below.

Property~\eqref{eq:k-depth} asserts that two terms $s, t \in F_n$ induce the same term operation on $*$ if and only if the right\hyp{}depth sequences $\rho_{T_s} := (\rho_{T_s}(x_1), \dots, \rho_{T_x}(x_n))$ and $\rho_{T_t} := (\rho_{T_t}(x_1), \dots, \rho_{T_t}(x_n))$ are componentwise congruent modulo $k$.
Therefore $\spac{*}$ equals the number of possible right\hyp{}depth sequences modulo $k$ of binary trees with $n$ leaves labeled with $x_1, \dots, x_n$.

By Lemmas \ref{lem:nk-adm-1} and \ref{lem:nk-adm-1}, a sequence $(d_1, \dots, d_n)$ with $1 \leq d_1, \dots, d_n \leq k$ is congruent modulo $k$ to the right depth sequence of a binary tree with $n$ leaves labeled by $x_1, \dots, x_n$ if and only if the sequence $(n_1, \dots, n_k)$ is $(n,k)$\hyp{}admissible, where 
\[
n_i := \#\{j \in \nset{n} : d_j = i\} \qquad \text{for $i = 1, \dots, k$.}
\]

First, assume that $n_1, \dots, n_k \geq 1$.
Then $(n_1, \dots, n_k)$ is $(n,k)$\hyp{}admissible if and only if $n_1 + \dots + n_k = n$.
Thus the sequences $(d_1, \dots, d_n)$ belonging to this case are in bijection with partitions of $\nset{n}$ into ordered blocks $\{ j \in \nset{n} : d_j = i\}$ for $i = 1, \dots, k$, and they are counted by $k! S(n,k)$.

Next, assume that there exists $i \in \nset{k-2}$ such that $n_{i+1} = 0$, where $i$ is as small as possible.
We have $n_{i+1} = \dots = n_{k-1} = 0$ and $n_k = 1$.
Thus the number of sequences $(d_1, d_2, \dots, d_n)$ belonging to this case is
\begin{align*}
n \sum_{1 \leq i \leq k-2} i! S(n-1,i)
\end{align*}

Combining the above two cases we have 
\begin{align*}
\spac{*}
&= k! S(n,k) + n \sum_{1 \leq i \leq k-2} i! S(n-1,i) \\
&= k! S(n,k) + n \sum_{0 \leq i \leq k-2} i! S(n-1,i).
\end{align*}
Consequently, we have the exponential generating function 
\begin{align*}
\sum_{n=1}^\infty \frac{\spac{*}}{n!} t^n
&= \sum_{n \geq 1} k! S(n,k) \frac{t^n}{n!} + \sum_{n \geq 1} \sum_{0 \leq i \leq k-2} i! S(n-1,i) \frac{t^n}{(n-1)!} \\
& = (e^t-1)^k + \sum_{0 \leq i \leq k-2} i! \sum_{n \geq 1} S(n-1,i) \frac{t^{n}}{(n-1)!} \\ 
& = (e^t-1)^k + \sum_{0 \leq i \leq k-2} t(e^t-1)^{i}.
\qedhere
\end{align*}
\end{proof}

\begin{remark}
When $k = 3$ we have $n \sum_{1 \leq i \leq k-2} i! S(n-1,i) = n$ for all $n \geq 2$.
When $k = 4$, the sequence $n \sum_{1 \leq i \leq k-2} i! S(n-1,i)$ is recorded in OEIS~\cite[A058877]{OEIS} and has the following simple closed formulas:
\[
n 2^{n-1} - n = \sum_{1 \leq j \leq n} (n - 2 + j) 2^{n-j-1} 
= \sum_{1 \leq j \leq n-1} \binom{n}{j} (n-j).
\]
\end{remark}

Recently, Hein and the first author~\cite{VarCat} generalized the $k$\hyp{}associativity to the \emph{$(k,\ell)$\hyp{}associativity}, based on the example $f * g := xf + yg$ for all $x, y \in \CC[x,y] / (x^k-1, y^\ell-1)$, which satisfies
\begin{equation}\label{eq:k-l-depth}
\forall s, t\in F_n,\ s^{*} = t^{*} \Longleftrightarrow \delta_{T_s}(x_i) \equiv \delta_{T_t}(x_i) \pmod{k},\quad \rho_{T_s}(x_i) \equiv \rho_{T_t}(x_i) \pmod{\ell}, \quad i = 1, 2,\ldots, n. 
\end{equation}
Computations give the following ac\hyp{}spectra for any operation $*$ satisfying the above property~\eqref{eq:k-l-depth}, which do not appear in OEIS.
\begin{itemize}
\item
$(k,\ell) = (2,2)$: $1$, $2$, $12$, $54$, $260$, $1080$, \ldots 

\item
$(k,\ell) = (3,2)$: $1$, $2$, $12$, $84$, $590$, $4110$, \ldots 

\item
$(k,\ell) = (4,2)$: $1$, $2$, $12$, $84$, $770$, $7080$, \ldots

\item
$(k,\ell) = (3,3)$: $1$, $2$, $12$, $108$, $960$, $9240$, \ldots
\end{itemize}

\subsection{The $k$-depth-equivalence}\label{sec:k-depth}
We now consider the operation on $\CC$ defined by $a * b := e^{2 \pi i / \ell} a + e^{2 \pi i / k} b$, which generalizes the example $a * b := a + e^{2 \pi i / k} b$ mentioned earlier.
When $k = \ell$, one sees that any two full linear terms agree on $*$ if and only if their corresponding binary trees are $k$\hyp{}depth\hyp{}equivalent, i.e.,
\begin{equation}\label{eq:depth} 
\forall s,t\in F_n,\ s^{*} = t^{*} \iff d_{T_s}(x_i) \equiv d_{T_t}(x_i) \pmod{k} .
\end{equation}
Further generalizations of depth equivalence were studied in recent work of the second author.

For $k = 2$, the resulting operation is
the \emph{double minus operation} $a \ominus b := -a - b$.
The first author, Mickey, and Xu~\cite{DoubleMinus} showed that if $\spa{\ominus}=1$ if $n=1$ and
\[
\spa{\ominus}
= \left\lfloor \frac{2^n}3 \right\rfloor = 
\begin{cases} 
\displaystyle \frac{2^{n}-1}{3}, & \text{if $n$ is even}, \\[10pt]
\displaystyle \frac{2^{n}-2}{3}, & \text{if $n$ is odd}
\end{cases}
\]
if $n\ge2$.
This coincides with an interesting sequence in OEIS~\cite[A000975]{OEIS} (except for the first term). 
Now we show that $\spac{\ominus}$ agrees with the well\hyp{}known \emph{Jacobsthal sequence}~\cite[A001045]{OEIS}.

\begin{theorem}\label{thm:ominus}
For $n \geq 1$ we have $\spac{\ominus} = (2^n - (-1)^n) / 3$.
\end{theorem}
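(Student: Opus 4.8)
The plan is to reduce the problem to a counting problem about leaf depths via the characterization \eqref{eq:depth}, which for $\ominus$ asserts $s^{\ominus} = t^{\ominus}$ if and only if $d_{T_s}(x_i) \equiv d_{T_t}(x_i) \pmod 2$ for all $i$. Concretely, evaluating a full linear term under $\ominus$ gives $t^{\ominus}(a_1,\dots,a_n) = \sum_{i=1}^n (-1)^{d_{T_t}(i)} a_i$, so the induced operation is determined exactly by the parity-depth vector $\bigl(d_{T_t}(1) \bmod 2, \dots, d_{T_t}(n) \bmod 2\bigr) \in \{0,1\}^n$, and $\spac{\ominus}$ is the number of such vectors. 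First I would note that a fixed tree shape has a fixed multiset of leaf-depth parities, and letting the leaf labeling range over $\SS_n$ realizes every arrangement of that multiset; hence the achievable vectors are precisely those of Hamming weight $j$ for $j$ in the set $O_n$ of attainable numbers of odd-depth leaves, giving $\spac{\ominus} = \sum_{j \in O_n} \binom{n}{j}$.

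The core task is then to determine $O_n$, and I claim $O_n = \{\, j : 0 \le j \le n,\ j \equiv 1-n \pmod 3 \,\}$. Writing $e(T)$ and $o(T)$ for the numbers of even- and odd-depth leaves, the key structural fact is that attaching a common root flips all parities, so $o(T_L \wedge T_R) = e(T_L) + e(T_R)$ and hence $o(T) = n - o(T_L) - o(T_R)$. A short induction using this relation shows $o(T) \equiv 2(n-1) \equiv 1-n \pmod 3$ for every binary tree with $n$ leaves, which gives the inclusion $O_n \subseteq \{\, j : 0 \le j \le n,\ j \equiv 1-n \pmod 3 \,\}$.

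The hard part will be the reverse inclusion, i.e.\ the achievability of every admissible value of $j$. For $j \ge 1$ I would induct on $n$: since $0 \le n-j \le n-1$ and $n - j \equiv 2 - n \pmod 3$, the value $n-j$ lies in $O_{n-1}$ by the inductive hypothesis, so a tree $T_R$ with $n-1$ leaves and $o(T_R) = n-j$ exists, and then $T := \mathbf{1} \wedge T_R$ satisfies $o(T) = 1 + e(T_R) = j$. The genuinely delicate case is $j = 0$, which occurs exactly when $n \equiv 1 \pmod 3$ and demands a tree all of whose leaves have even depth; the $\mathbf{1} \wedge T_R$ device fails here. To handle it I would run a small mutual induction: an \emph{all-even} tree on $m \equiv 1 \pmod 3$ leaves is either a single leaf or $T_L \wedge T_R$ with both factors \emph{all-odd}, while an \emph{all-odd} tree on $m \equiv 2 \pmod 3$ leaves ($m \ge 2$) is $T_L \wedge T_R$ with both factors all-even. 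Taking one factor to be a two-leaf cherry in the former and $T_L = \mathbf{1}$ in the latter makes both inductions strictly descend (from $m$ to $m-2$ and from $m$ to $m-1$), with base cases the single leaf and the cherry; this yields an all-even tree on $n$ leaves whenever $n \equiv 1 \pmod 3$, closing the argument and establishing $O_n = \{\, j : 0 \le j \le n,\ j \equiv 1-n \pmod 3 \,\}$.

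Finally I would evaluate $\spac{\ominus} = \sum_{j \equiv 1-n} \binom{n}{j}$ (the sum over $j \equiv 1-n \pmod 3$) by a roots-of-unity filter. With $\omega = e^{2\pi i/3}$, the identities $1 + \omega = -\omega^2$ and $1 + \omega^2 = -\omega$ give
\[
\sum_{j \equiv 1-n \ (3)} \binom{n}{j} = \frac{1}{3}\Bigl(2^n + (-1)^n\bigl(\omega^{3n-1} + \omega^{3n-2}\bigr)\Bigr) = \frac{1}{3}\bigl(2^n + (-1)^n(\omega^2 + \omega)\bigr) = \frac{2^n - (-1)^n}{3},
\]
using that the two exponents reduce to $2$ and $1$ modulo $3$ and that $\omega^2 + \omega = -1$. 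Checking $n = 1, 2$ directly against the base of the induction then completes the proof of $\spac{\ominus} = (2^n - (-1)^n)/3$.
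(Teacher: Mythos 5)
Your proof is correct, but it takes a genuinely different route from the paper's. The paper's proof is a short reduction to prior work: it quotes the characterization of $B_n^{\ominus}$ from the double-minus paper (Huang--Mickey--Xu, Theorem~8), which says exactly which sign patterns $\pm a_1 \pm \dots \pm a_n$ with $r$ plus signs arise from \emph{bracketings} (all of them when $n+r \equiv 2 \pmod 3$ and $n \neq 2r-1$; all but the alternating one when $n = 2r-1$; none otherwise), and then observes that passing to full linear terms only adds the alternating-sign operation in the exceptional case, so $\spac{\ominus} = \spa{\ominus}$ for $n$ even and $\spa{\ominus}+1$ for $n$ odd, whence the formula follows from the known value of $\spa{\ominus}$. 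You instead give a self-contained argument: you reduce to counting depth-parity vectors, prove the mod-$3$ invariant $o(T) \equiv 1-n \pmod 3$, establish achievability of every admissible weight by the $\mathbf{1} \wedge T_R$ device plus the mutual all-even/all-odd induction, and finish with a roots-of-unity filter. What your approach buys is independence from the external theorem and, in fact, a simplification that the paper's route cannot exploit: since full linear terms permit arbitrary permutations of the leaves, only the \emph{number} of minus signs matters, so the delicate positional analysis behind the cited bracketing theorem (including the exceptional alternating-sign pattern) is entirely avoided; your problem is genuinely easier than the one the paper reduces to. What the paper's route buys is brevity and an explicit link between $\spac{\ominus}$ and $\spa{\ominus}$, which your argument does not exhibit. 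All steps of your proposal check out: the evaluation $t^{\ominus}(a_1,\dots,a_n) = \sum_i (-1)^{d_{T_t}(i)} a_i$, the recursion $o(T_L \wedge T_R) = e(T_L) + e(T_R)$, the parity/count bookkeeping in both inductions, and the final filter computation giving $\bigl(2^n - (-1)^n\bigr)/3$.
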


\begin{proof}
Note first that every term operation in $B_n^{\ominus}$ is of the form $(a_1, \dots, a_n) \mapsto \pm a_1 \pm \dots \pm a_n$.
On the other hand, not every operation of this form is in $B_n^{\ominus}$.
Consider operations of this form with exactly $r$ plus signs in the defining expression; there are $\binom{n}{r}$ such operations.
It was shown in~\cite[Theorem~8]{DoubleMinus} that
\begin{enumerate}[label={\upshape(\roman*)}]
\item all operations of this form are in $B_n^{\ominus}$ if $n + r \equiv 2 \pmod{3}$ and $n \neq 2r - 1$,
\item all but the one with alternating signs are in $B_n^{\ominus}$ if $n + r \equiv 2 \pmod{3}$ and $n = 2r - 1$,
and
\item none of them is in $B_n^{\ominus}$ if $n + r \not\equiv 2 \pmod{3}$.
\end{enumerate}

Now, let us consider term operations in $F_n^{\ominus}$.
They are again of the form $(a_1, \dots, a_n) \mapsto \pm a_1 \pm \dots \pm a_n$.
We get no new operations in the first and third cases above, but one more in the second case: the operation with alternating signs is in $F_n^{\ominus}$.
Taking the sum over all possible values of $r$, we have $\spac{\ominus} = \spa{\ominus}$ if $n$ is even and $\spac{\ominus} = \spa{\ominus} + 1$ if $n$ is odd. 
This implies that $\spac{\ominus} = (2^n - (-1)^n) / 3$.
\end{proof}

One sees that this theorem holds for any binary operation $*$ satisfying property~\eqref{eq:depth} with $k=2$.
For $k \geq 3$, neither $\spa{*}$ nor $\spac{*}$ occurs in OEIS.
\begin{itemize}
\item
$\spa{*}$ for $k = 3$: $1$, $2$, $5$, $14$, $42$, $129$, $398$, $1223$, $3752$, $11510$, \ldots

\item
$\spa{*}$ for $k = 4$: $1$, $2$, $5$, $14$, $42$, $132$, $429$, $1429$, $4849$, $16689$, \ldots

\item
$\spac{*}$ for $k = 3$: $1$, $3$, $13$, $35$, $101$, $315$, \ldots

\item
$\spac{*}$ for $k = 4$: $1$, $3$, $13$, $75$, $285$, $1099$, \ldots
\end{itemize}

\section{Remarks and questions}
\label{sec:conclusion}

Cs\'{a}k\'{a}ny and Waldhauser~\cite[Section~4]{AssociativeSpectra1} examined the associative spectrum of every two\hyp{}element groupoid.
We obtain the ac-spectra of all two-element groupoids in Example~\ref{ex:two-element-groupoids} and Propositions~\ref{prop:implication} and \ref{prop:Sheffer}.

As possible directions for further research, one could study the associative spectra and the ac-spectra of groupoids satisfying some properties weaker than associativity.
We would like to mention a few examples of such properties that have emerged in different branches of algebra.

A groupoid $(G,*)$ is \emph{left alternative} (resp., \emph{right alternative}) if it satisfies the identity $(x*x)*y \approx x*(x*y)$ (resp., $y*(x*x)=(y*x)*x$).
A groupoid is \emph{alternative} if it is both left and right alternative.
An associative groupoid must be alternative, but not vice versa.

A Lie algebra is neither commutative nor associative, but it is \emph{flexible} in the sense that it satisfies the identity $x * (y * x) \approx (x * y) * x$.
Any commutative or associative operation must be flexible.
So flexibility becomes important when a binary operation is neither commutative nor associative, such as the multiplication of the sedenions, which is not even alternative (the same for all higher Cayley--Dickson algebras).
For example, the multiplication of the octonions is alternative but not associative.
In 1954, Richard Schafer~\cite{Schafer} examined the algebras generated by the Cayley--Dickson process over a field and showed that they satisfy the flexible identity.

A groupoid $(G,*)$ is \emph{power associative} if the subgroupoid generated by any element is associative.
Power associativity and alternativity are unrelated properties in the sense that neither implies the other, as shown, e.g., by the examples provided by Holin \cite[Appendix A.1.1]{Holin}.

Recall that the \emph{Jordan algebra} of $n \times n$ self\hyp{}adjoint matrices over $\RR$, $\CC$, or $\mathbb{H}$ with a product defined by $x \circ y := (xy+yx)/2$ is commutative and thus flexible. 
Although it is not associative, it is power associative and satisfies the \emph{Jordan identity} 
\[
(x \circ y) \circ (x \circ x) \approx x \circ (y \circ (x \circ x)).
\]
We showed in Section~\ref{sec:commutative-neutral} that the associative spectrum $\spa{\circ}$ of the Jordan algebra achieves the upper bound $C_{n-1}$ for an arbitrary binary operation and its ac\hyp{}spectrum $\spac{\circ}$ reaches the upper bound $D_{n-1}$ for commutative operations.

Another interesting example is the \emph{Okubo algebra}, which consists of all $3$-by-$3$ trace zero complex matrices with product defined as 
\[ x \circ y := axy + byx - \mathrm{tr}(xy) I_3 /3. \]
Here $I_3$ is the $3$-by-$3$ identity matrix and $a, b \in \CC$ satisfy $a+b = 3ab = 1$.
The Okubo algebra is flexible and power associative but not associative nor alternative.

\end{document}